\renewcommand{\baselinestretch}{1.15} 
\newtheorem{theorem}{Theorem}[section]
\newtheorem{lemma}[theorem]{Lemma}
\newtheorem{proposition}[theorem]{Proposition}
\newtheorem{corollary}[theorem]{Corollary}
\theoremstyle{definition}
  \newtheorem*{definition}{Definition}
\theoremstyle{remark}
\newcommand{\ia}{{\rm({\it i\/}\rm)}}
\newcommand{\ii}{{\rm({\it ii\/}\rm)}}
\newcommand{\iii}{{\rm({\it iii\/}\rm)}}
\newcommand{\Isom}{\mbox{Isom}}
\newcommand{\ahalf}{{\textstyle\frac12}}
\renewcommand{\epsilon}{\varepsilon}
\newcommand{\id}{\operatorname{id}}
\newcommand{\modtwo}{\, \mbox{\small{$\bmod$}}\,2}
\newcommand{\modfour}{\, \mbox{\small{$\bmod$}}\,4}
\newcommand{\modtwotin}{\:\!\mbox{\tiny{$\bmod \:\!2$}}\,}
\newcommand{\N}{\mathbb{N}}
\let\paragraph=\S  
\newcommand{\R}{\mathbb{R}}
\renewcommand{\S}{\mathbb{S}}
\newcommand{\Z}{\mathbb{Z}}
\begin{document}

\title[Embedded periodic surfaces]
{Construction of embedded periodic surfaces in $\R^n$}
\date{\today}
\author[Grosse-Brauckmann, K\"ursten]{Karsten Grosse-Brauckmann, 
Susanne K\"ursten}
\address{Technische Universit\"at Darmstadt, Fachbereich Mathematik (AG 3),
    Schlossgartenstr.~7, 64289 Darmstadt, Germany}
\email{kgb@mathematik.tu-darmstadt.de}
\subjclass[2010]{53A10; 53A07; 49Q05}

\begin{abstract}
  We construct embedded minimal surfaces which 
  are $n$-periodic in $\R^n$.  They are new for codimension $n-2\ge 2$.
  We start with a Jordan curve of edges of the $n$-dimensional cube.  
  It bounds a Plateau minimal disk which Schwarz reflection extends 
  to a complete minimal surface.
  Studying the group of Schwarz reflections, we can characterize 
  those Jordan curves for which the complete surface is embedded.
  For example, for $n=4$ exactly five such Jordan curves generate 
  embedded surfaces.
  Our results apply to surface classes other than minimal as well, 
  for instance polygonal surfaces.
\end{abstract}
\maketitle

\section{Introduction} 

Triply periodic embedded minimal surfaces in euclidean $3$-space
are a common model for real-world interfaces.
Riemann and Schwarz were the first to construct such surfaces.
They chose a suitable polygonal Jordan curve, 
constructed a minimal disk it bounds,
and used the Schwarz reflection principle
to extend the surface by successive half-turn rotations.
Many further polygonal contours have been considered since, 
and the use of the Plateau solution instead of 
the Weierstrass representation simplified the construction,
see \cite[\paragraph~818]{nit}.
A.~Schoen and Karcher used the method together with conjugation to
establish the existence of many further triply 
periodic embedded minimal surfaces \cite{schoen}\cite{kar}.  
The original method of Riemann and Schwarz was also applied 
in the $3$-sphere to construct compact embedded minimal surfaces,
namely by Lawson in 1970~\cite{law},
and recently by Choe and Soret \cite{chso}.
\smallskip

In the present paper we
construct $n$-periodic embedded surfaces in $\R^n$, $n\ge 3$,  
with codimension $n-2$ by the same method.
To generate them, we consider a special class of Jordan curves,
as well as surfaces they bound:
\begin{definition}
  \ia\ Let $C=[-\frac12,\frac12]^n\subset\R^n$ be the unit cube.
  A \emph{Jordan path} is an embedded edge loop $\Gamma\subset\partial C$ 
  which contains at least one edge in each coordinate direction.\\
  \ii\ An \emph{initial surface} $\Sigma_0\subset C$ 
  is an embedded, compact surface with interior $\accentset{\circ}{\Sigma}_0 \subset (-\frac12,\frac12)^n$ whose boundary
  $\Gamma:=\partial \Sigma_0$ is a Jordan path. 
\end{definition}
As we show in Sect.~\ref{se:min}, 
for all Jordan paths~$\Gamma$ of interest to us, 
the minimal disk obtained as a Plateau solution for~$\Gamma$
is an example of an initial surface~$\Sigma_0$.
Another example would be a triangulated, piecewise linear disk,
obtained as the cone of~$\Gamma$ over the origin. 
Similarly, there is a vertex in the cube such that 
the cone of~$\Gamma$ is a triangulated discrete minimal surface.
Schwarz reflection across the straight edges of~$\Gamma$
maintains all these surface classes so that 
by successive application we obtain a complete surface~$\Sigma$
which is again minimal, triangulated, or discrete minimal.
We consider the following problem: \emph{Is $\Sigma$ embedded?}
\smallskip

We start by studying the symmetry group of~$\Sigma$
in Sect.~\ref{secgroupS}.  
The resulting lattice of translations $\Lambda(\Sigma)$ contains $(4\Z)^n$
by Thm.~\ref{4znlattice}, 
and so it is convenient to study the symmetries in the quotient
$T^n:=\R^n/(4\Z)^n$, see Sect.~\ref{se:quotients}.
As our main result, in Sect.~\ref{se:charemb} we arrive at
two theorems characterizing the embeddedness of~$\Sigma$.
First, Thm.~\ref{thmfilledcubes} characterizes it in terms 
of the number of copies of the initial surface~$\Sigma_0$ in $T^n$, 
namely exactly $2^{n+2}$ out of the $4^n$~unit cubes 
making up for~$T^n$ must contain a copy of~$\Sigma_0$.
Second, Thm.~\ref{criterion} characterizes embeddedness
in terms of the number of lattice elements for the quotient surface in~$T^n$.
These results represent the nontrivial answer to the embeddedness problem
for codimension $n-2\ge 2$, and also shed new light on 
the known case of codimension~$1$.

Since $\Sigma$ is generated by Schwarz reflection with respect to~$\Gamma$,
the number of copies of~$\Sigma_0$ in~$T^n$ depends on~$\Gamma$ alone.
This raises the specific question of which Jordan curves 
meet the conditions of our main theorems. 
We present an algorithmic answer in Sect.~\ref{seccriterion},
where we depend on the notation for Jordan curves 
introduced in Sect.~\ref{se:J}. 
We exemplify our answer on the particular case $n=4$ 
in Sects.~\ref{ss:r4m8} and~\ref{se:rfourcase}:
From the six Jordan paths with $8$~edges, exactly three
lead to an embedded surface~$\Sigma$ (Thm.~\ref{eightedgesembedded}), 
and there are exactly 
two more such Jordan paths with ten or more edges (Thm.~\ref{theoremR4}).  
That is, altogether five Jordan paths 
lead to embedded periodic surfaces in~$\R^4$. 
For all higher dimensions we exhibit families of Jordan curves 
which lead to embedded surfaces $\Sigma\subset\R^n$, 
see Prop.~\ref{pr:series}.
Finally, in Sect.~\ref{se:min}, we confirm that minimal surfaces
satisfy our assumptions so that all admissible Jordan curves 
indeed generate embedded periodic minimal surfaces.

Let us remark that our results also apply to the case that the
cube is replaced by a rectangular box.  On the other hand,
our assumptions are restrictive in that we only admit 
Jordan curves which are edge loops of the cube or box.
It is crucial for the present work that 
Schwarz reflections preserve coordinate directions.
More general cases, such as diagonal edges, remain to be studied.

Let us also note that for the case of minimal surfaces in~$\R^n$, 
only very few explicit $n$-periodic examples with $n\ge 4$
seem to be known:
Shoda uses Weierstrass data to construct
examples of genus 3 and~10 in~$\R^4$ without discussing embeddedness
\cite{sh1}, \cite{sh2}.  Nagano and Smyth give some general theory
and an existence statement for $n$-periodic immersions~\cite{nasm}.  
\smallskip

This work contains the results of a PhD thesis of the second author,
written at TU Darmstadt~\cite{kue}, 
for which the first author was the advisor.
We would like to thank Steffen Fr\"ohlich (Mainz) for suggesting 
the problem and helpful discussions.
We are also grateful to Michael Joswig (TU Berlin)
for discussions about the polygonal case.

\section{Jordan paths} \label{se:J}


We can describe a path $\Gamma$ of length $m\ge 2n$ along the edges 
of $C\subset\R^n$
in terms of an initial vertex~$p$ of~$C$ and a sequence 
$\gamma=(\gamma(i))_{1 \leq i \leq m}$ with $1 \leq \gamma(i) \leq n$:
The path starts at~$p$ along the edge in the $\gamma(1)$-direction, 
then follows an edge in $\gamma(2)$-direction, etc., 
see Fig.~\ref{figpathR3}.
\begin{figure}[t]
\begin{center}
\hspace{3mm}
\raisebox{8mm}{
\setlength{\unitlength}{1cm}
\begin{picture}(1.8,1.5)
\put(0,0){\vector(1,0){1}}
\put(1.2,-0.1){$x_1$}
\put(0,0){\vector(0,1){1}}
\put(-0.1,1.3){$x_3$}
\put(0,0){\vector(1,1){0.8}}
\put(1.0,0.8){$x_2$}
\end{picture}
}
\includegraphics[scale=0.47]{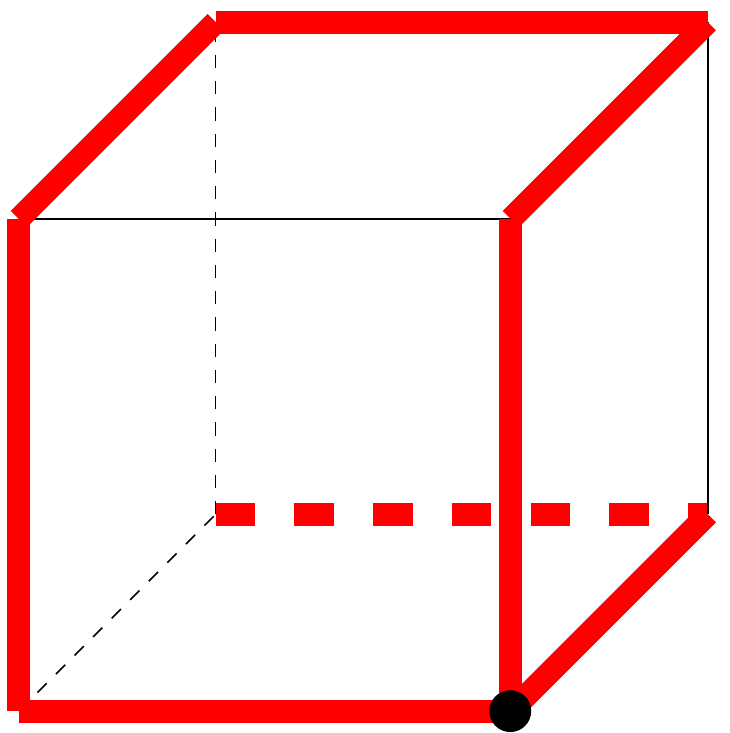}
\hspace{7mm}
\raisebox{8mm}{
\setlength{\unitlength}{1cm}
\begin{picture}(1.8,1.5)
\put(0,0){\vector(1,0){1}}
\put(1.2,-0.1){$x_1$}
\put(0,0){\vector(0,1){1}}
\put(-0.1,1.3){$x_3$}
\put(0,0){\vector(2,3){0.7}}
\put(0.5,1.2){$x_4$}
\put(0,0){\vector(1,1){0.8}}
\put(1.0,0.8){$x_2$}
\end{picture}
}
\includegraphics[scale=0.5]{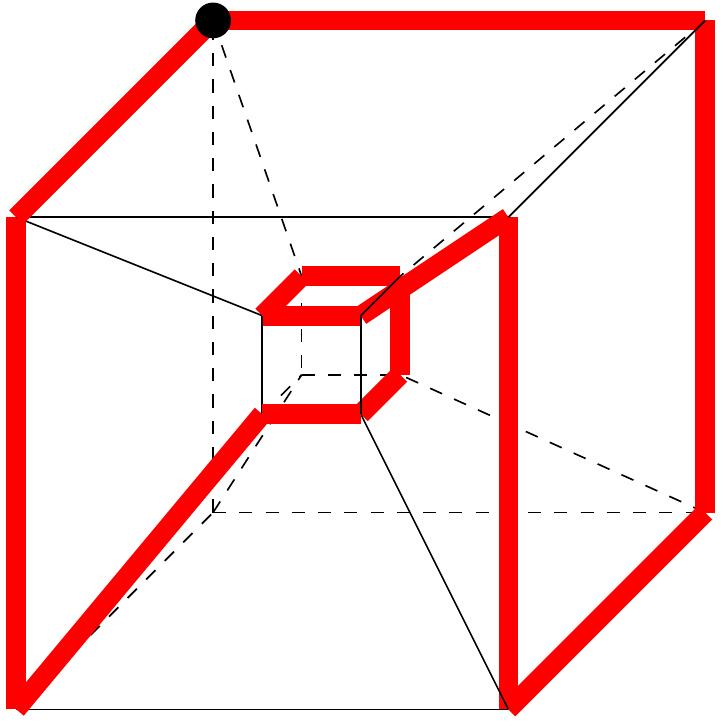} 
\end{center}
\caption{\label{figpathR3} 
Left: A non-closed, non-embedded path in $\R^3$ with marked initial vertex 
$p= (\frac{1}{2}, -\frac{1}{2}, -\frac{1}{2})$ and sequence $3212\,3121$.
Right: The projection of the Jordan path in $\R^4$ with initial vertex 
$p= (-\frac{1}{2}, \frac{1}{2}, \frac{1}{2}, -\frac{1}{2})$ 
and sequence $132\, 3412\, 1321\, 432$.}
\end{figure}
We need to impose the following
conditions in order for $\bigl(p,(\gamma(i))_{1 \leq i \leq m}\bigr)$ 
to describe an embedded edge loop:  
\begin{itemize}
 \item Closedness: The path contains an even number of edges
   in each coordinate direction, that is, $\gamma(i) =\beta$ holds 
   for an even number of indices $i$ for each $1 \leq \beta \leq n$. 
\item Embeddedness: The path does not contain a proper closed 
   subpath, that is, 
   no proper consecutive subsequence of~$\gamma$ (considered as cyclic)  
   satisfies the first condition. 
\end{itemize}
\begin{figure}[b]
\begin{center}
  \includegraphics[scale=0.45]{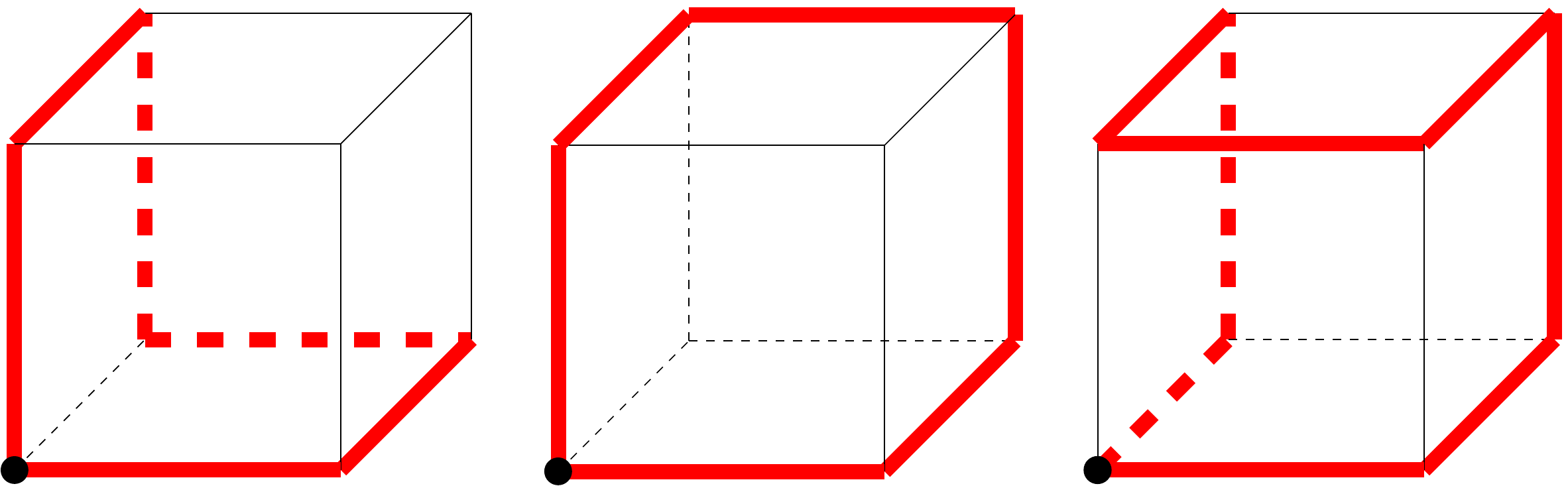} 
\end{center}
  \caption{\label{fi:JordpathR3} Jordan paths in $\R^3$:
  $\Gamma_{CLP}=121\,323$, $\Gamma_{D}=123\,123$, 
  and, related to the Gergonne problem, $\Gamma_{GP}=1232\,1232$.}
\end{figure}

For dimension $3$, this gives the following paths:
\begin{proposition} \label{dim3}
  Up to symmetry, there are exactly the three Jordan 
  paths in $\R^3$ depicted in Fig.~\ref{fi:JordpathR3}. 
\end{proposition}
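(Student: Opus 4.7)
The plan is to view a Jordan path as a simple cycle in the $1$-skeleton $Q_3$ of the cube $C$ that uses at least one edge in each of the three coordinate directions, and to enumerate such cycles up to the order-$48$ symmetry group of~$C$. Since $Q_3$ is bipartite (two-colored by the parity of the coordinate sum) and has only $8$~vertices, every simple cycle has even length in $\{4,6,8\}$. Length $4$ is immediate: the six $4$-cycles of $Q_3$ are the faces of~$C$ and each uses edges in only two coordinate directions, so none is a Jordan path. It thus suffices to enumerate the $6$-cycles and $8$-cycles up to symmetry.

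For length $6$, a simple cycle visits six of the eight vertices and is determined, as a vertex set, by the unordered pair of missing vertices. I classify by the Hamming distance of this pair. If the distance is $3$ (an antipodal pair), the induced subgraph on the remaining six vertices is itself a $6$-cycle, the equatorial hexagon representing $\Gamma_D$; there are four such, one per space diagonal. If the distance is $1$ (an edge pair), a short direct check shows that the induced subgraph has exactly two vertices of degree~$3$ joined by an edge, and the unique Hamilton cycle of this subgraph represents $\Gamma_{CLP}$; there are twelve such cycles, one per edge of~$C$. If the distance is $2$ (a face-diagonal pair), each of the two common neighbors of the missing pair has only one remaining neighbor, so no Hamilton cycle on the induced subgraph exists. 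This yields exactly two symmetry orbits of $6$-cycles, accounting for all $4+12=16$ six-cycles of~$Q_3$.

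For length $8$, a simple cycle is Hamiltonian. Such a cycle must use all three coordinate directions, since restricting $Q_3$ to edges in only two directions yields two disjoint $4$-cycles, which admit no Hamilton cycle on all $8$ vertices. It remains to show that all Hamilton cycles of $Q_3$ lie in a single orbit of the cube's symmetry group. Fixing an initial vertex and an initial edge direction, the continuation after the next two edges is forced up to the symmetry exchanging the two remaining directions, and one verifies that the resulting cycle is symmetric to~$\Gamma_{GP}$. Combined with the length-$6$ analysis, this produces the three listed Jordan paths. The main delicate point is the distance-$1$ subcase at length $6$, where the induced subgraph is not itself a cycle; the Hamilton cycle is then forced by the requirement that both degree-$3$ vertices drop their common connecting edge, leaving a unique consistent completion.
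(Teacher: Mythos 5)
Your proof is correct, and it is worth noting that the paper itself does not spell out a proof of Proposition~\ref{dim3}: as the text explains after introducing the symmetry operations and the gap-length invariant, the classification is meant to be carried out ``by inspection of the finite number of sequences $\gamma$,'' i.e.\ by enumerating cyclic words in $\{1,2,3\}$ of the right form and reducing them modulo cyclic shifts, reversal, and value permutations (with the gap-vectors $a(1),\ldots,a(n)$ available as separating invariants). Your approach is genuinely different: instead of working with symbolic sequences, you enumerate simple cycles in the $1$-skeleton $Q_3$ of the cube and classify them by the complementary vertex data. The decomposition by cycle length (parity from bipartiteness, lengths $\{4,6,8\}$, with $4$ excluded since faces use only two directions) and, for length $6$, by the Hamming distance of the two omitted vertices is a clean structural argument; it simultaneously exhibits all $16$ six-cycles, separates the two orbits by a symmetry-invariant quantity (the Hamming distance of the complement), and identifies $\Gamma_D$ with equatorial hexagons and $\Gamma_{CLP}$ with the unique Hamilton cycle of an edge-deleted cube. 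This buys geometric insight that the sequence-enumeration does not. On the other hand, the paper's sequence-based method scales more directly to higher~$n$, where the $1$-skeleton of the $n$-cube has more vertices and many more cycles, and where the authors indeed apply it to classify length-$8$ Jordan paths in $\R^4$ (Prop.~\ref{eightedges}). One small imprecision in your write-up: at length $8$, after fixing the start vertex and first two edge directions (say $1$ then $2$), the third edge is in direction $1$ or $3$, and these two continuations are not related by the symmetry swapping the two directions other than the initial one (that symmetry was already used to normalize the second edge); nevertheless both continuations close up uniquely, and the two resulting Hamilton cycles $12131213$ and $12321232$ are equivalent under the cube symmetries, so the conclusion stands. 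You might also state explicitly that the antipodal-type and edge-type six-cycles lie in distinct orbits because the Hamming distance of the complementary pair is a symmetry invariant; this is implicit in your classification but worth saying.
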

\noindent
For each of these Jordan paths, the unique Plateau disk 
determines an initial surface~$\Sigma_0$, 
whose extension yields a triply periodic minimal surface,
which is known to be embedded (see also Prop.~\ref{dimension3}).
All three surfaces are due to Schwarz.
In Schoen's nomenclature~\cite{schoen}
these are the $CLP$-surface and the $D$-surface. 
The extension of the third surface turns out to be identical 
with a deformation of the $D$-surface, namely the cube as shown 
in the second image of Fig.~\ref{fi:JordpathR3} is deformed to 
a rectangular box with height $\sqrt 2$ over a unit square 
(compare~\cite[5.1.3]{kar}).
It also arises from the extension of Schwarz' solution 
to Gergonne's problem.
%

To give a systematic treatment of Jordan paths in higher dimensions, 
we need to distinguish them only up to symmetry.
It can be checked that the following operations 
on a Jordan path $\Gamma=\bigl(p,(\gamma(i))_{1 \leq i \leq m}\bigr)$ are
equivalent to the action of symmetries:
\begin{itemize}
 \item Change of initial vertex $p$, 
 \item cyclic shift of the sequence $\gamma$, 
 \item order reversal of the sequence $\gamma$, or
 \item the action of a permutation on the values of $\gamma$.
\end{itemize}
%
Therefore, we identify Jordan paths $\Gamma$ 
with a sequence $(\gamma(i))_{1 \leq i \leq m}$ ($1 \leq \gamma(i) \leq n$),
which we regard as cyclic, non-oriented, and modulo permutation of the values.

The above operations can be used to verify that two Jordan paths are equal.
On the other hand, let us describe an invariant useful 
to distinguish Jordan paths up to symmetry.
For each value of $\beta$, consider the numbers $i_1<\ldots< i_k$ 
such that $\gamma(i_j)=\beta$.  
These define a cyclic vector
of gap lengths $a(\beta):=(|i_{j+1}-i_j| \mid j=1,\ldots,k)$.
The $n$ vectors $a(1),\ldots,a(n)$, up to permutation, 
form an invariant of the Jordan curve. 
This follows from the fact that a symmetry maps 
parallel edges onto parallel edges.

Following this strategy, we can classify Jordan paths up to symmetry, 
by inspection of the finite number of sequences~$\gamma$, for instance:  
%
\begin{figure}[b]
\begin{center}
\includegraphics[scale=0.5]{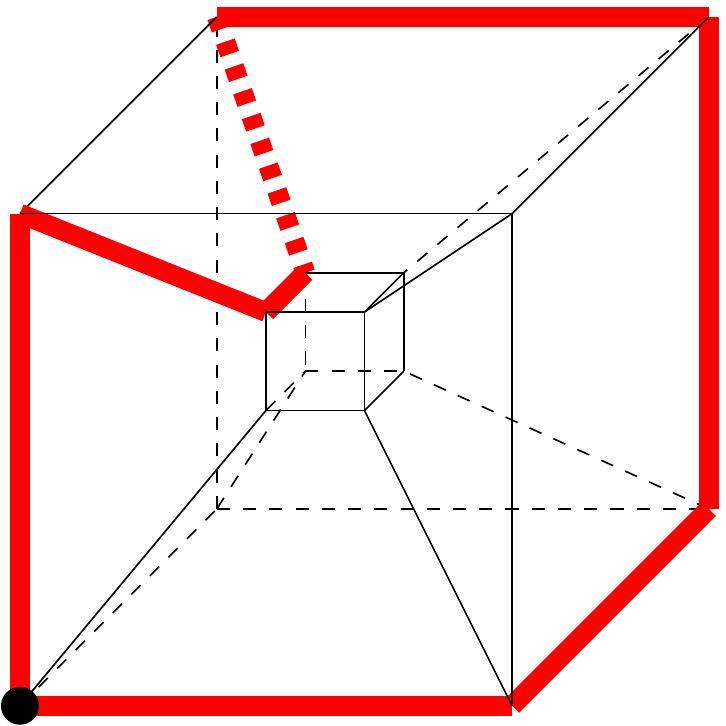}\hspace{3mm}
\includegraphics[scale=0.5]{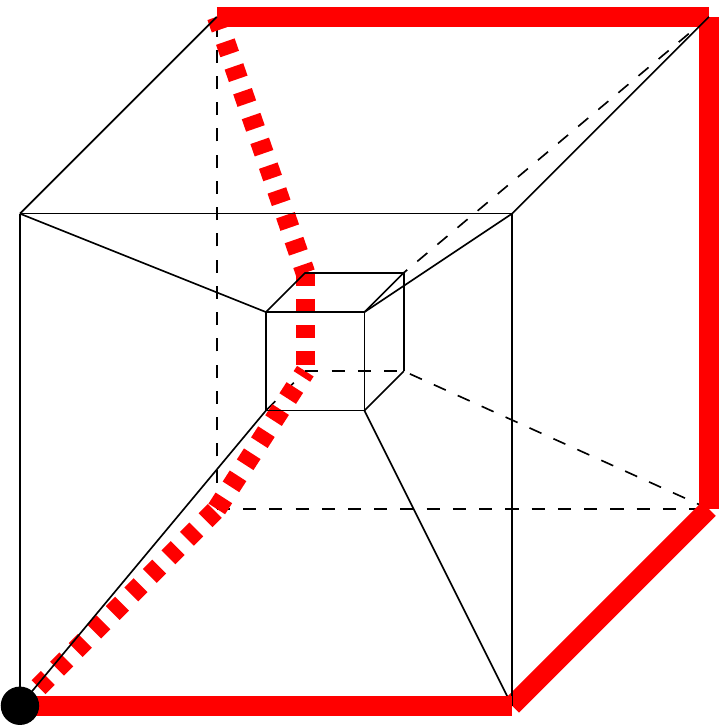}\hspace{3mm}
\includegraphics[scale=0.5]{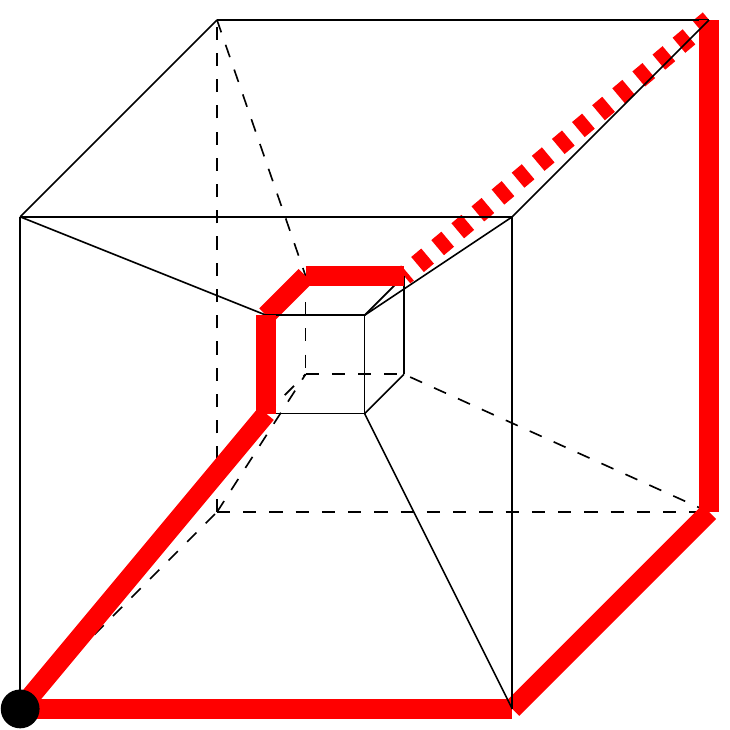} 
\end{center} 
\caption{\label{fi:R4nonembedded} 
  Three of the six Jordan paths with $8$ edges in $\R^4$,
  namely $\Gamma_1=1231\,4243$, $\Gamma_2=1231\,4342$,
  and $\Gamma_5=1234\,1234$.}
\end{figure}
\begin{proposition} \label{eightedges}
  In $\R^4$ there are exactly the six different Jordan paths $\Gamma$ 
  with length $m=8$,
  depicted in Fig.~\ref{fi:R4nonembedded} and~\ref{fi:R4embedded}, namely
\begin{eqnarray*}
&& 
\Gamma_1 := 1231\,4243, \hspace{1 cm}      
\Gamma_2 := 1231\,4342, \hspace{1 cm}      
\Gamma_3 := 1231\,4234, \\ &&              
\Gamma_4 := 1231\,4324, \hspace{1 cm}      
\Gamma_5 := 1234\,1234, \hspace{1cm}       
\Gamma_6 := 1232\,1434 \, .                
\end{eqnarray*} 
\end{proposition}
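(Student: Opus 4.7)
My plan is to reduce the enumeration to a case analysis over the cyclic gap invariant $a(\beta)$ introduced just before the proposition.

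\emph{Step 1 (pigeonhole on multiplicities).} Since $m=8$ and each of the four coordinate directions appears at least once with even multiplicity, and the only decomposition $8 = e_1+e_2+e_3+e_4$ with each $e_i$ a positive even integer is $2+2+2+2$, every direction appears exactly twice. The two positions of direction $\beta$ in the cyclic index set $\Z/8\Z$ determine gaps $(d,8-d)$ with $d\in\{1,2,3,4\}$. Embeddedness rules out $d=1$, since two adjacent equal letters form a length-$2$ closed subsequence; so $a(\beta)\in\{(2,6),(3,5),(4,4)\}$. Write $n_d$ for the number of directions with gap $(d,8-d)$; then $n_2+n_3+n_4=4$, giving $15$ a priori triples to analyze.

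\emph{Step 2 (realizability).} I would show that only the six triples
\[
(0,0,4),\ (0,4,0),\ (0,2,2),\ (2,0,2),\ (2,2,0),\ (1,2,1)
\]
are realizable. The key observation is that $(4,4)$-pairs must be antipodal, i.e.\ of the form $\{i,i+4\}$ in $\Z/8\Z$, of which there are only four, so $n_4=3$ automatically forces $n_4=4$. After placing the $(4,4)$-pairs, one inspects which distance-$2$ and distance-$3$ pairs remain in the complementary positions; this quickly excludes $(1,0,3),(0,1,3),(1,1,2)$ and, by a parity-of-positions argument, also $(0,3,1),(3,0,1),(0,1,3)$ and their reflections. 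Finally $(4,0,0)$ fails because every placement of four disjoint cyclic distance-$2$ pairs produces a length-$4$ window of the form $\{x,x,y,y\}$, contradicting the length-$4$ embeddedness condition.

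\emph{Step 3 (uniqueness within each invariant).} For each realizable triple I would fix a normalization (cyclic shift plus a value permutation) and show that the length-$4$ embeddedness condition, together with the residual symmetries of reversal and of value permutations not yet used, collapses the admissible placements to a single orbit. For $(0,0,4)$ the identity $\gamma(i)=\gamma(i+4)$ is forced and yields $\Gamma_5$. For $(2,0,2)$ only one choice of the second $(4,4)$-pair is compatible with embeddedness, and the subsequent placement of the two $(2,6)$-pairs yields $\Gamma_6$ up to the swap of the two non-distinguished values; analogous short verifications identify $(0,4,0)$ with $\Gamma_4$, $(0,2,2)$ with $\Gamma_3$, $(2,2,0)$ with $\Gamma_2$, and $(1,2,1)$ with $\Gamma_1$. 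Since the six realizable triples are pairwise distinct and $(n_2,n_3,n_4)$ is a symmetry invariant, the six classes are pairwise inequivalent.

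The main obstacle is the organized bookkeeping in Steps~2 and~3: exclusions rely on several small pigeonhole arguments, and within some invariants (notably $(2,0,2)$ and $(1,2,1)$) one must verify that configurations which differ in the \emph{choice} of which antipodal diagonal hosts the second $(4,4)$-pair are in fact equivalent under a combined cyclic shift, reversal, and value permutation. This is finite but requires care.
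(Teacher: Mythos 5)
Your approach follows the strategy the paper itself sketches — classify Jordan paths by the cyclic gap invariant $a(\beta)$ and finish by finite inspection — and you have correctly filled in the pigeonhole reduction to $(n_2,n_3,n_4)$-triples. The right organizing principle is in fact parity: since a gap of $3$ (odd) pairs an odd index with an even one in $\Z/8\Z$, one needs $n_3$ even, which rules out precisely the six triples $(3,1,0),(1,3,0),(2,1,1),(1,1,2),(0,3,1),(0,1,3)$. Your Step~2 misattributes this: it lists $(0,1,3)$ twice, and includes $(3,0,1)$ in the set killed by parity, but $(3,0,1)$ has $n_3=0$ (even) and so survives the parity check. It fails for a separate reason: one antipodal pair $\{i,i+4\}$ leaves two arcs of three positions whose middle elements have both distance-$2$ neighbours among the removed positions, so three disjoint distance-$2$ pairs cannot cover the complement. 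With that fixed, together with your $n_4=3\Rightarrow n_4=4$ observation for $(1,0,3)$ and the forced $xyxy$ window for $(4,0,0)$, the nine exclusions are complete. The caveat you flag in Step~3 is genuine and must be checked: e.g.\ for invariant $(0,4,0)$ both placements $\{1,4\},\{2,7\},\{3,6\},\{5,8\}$ and $\{1,6\},\{2,5\},\{3,8\},\{4,7\}$ are admissible, and one must exhibit the cyclic shift (by $3$) plus relabeling identifying them before concluding uniqueness; once done, your enumeration agrees with the paper's.
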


\section{The symmetry group and location of self-intersections of $\Sigma$}
\label{secgroupS}

\subsection{Schwarz reflection and the group $S$}
Let $1 \leq \beta \leq n$.
Schwarz reflection w.r.t.\ an edge in the $\beta$-direction 
is a half-turn rotation, preserving the $\beta$-coordinate and
acting as a reflection with respect to all other coordinate directions. 
To describe a Schwarz reflection, 
denote a half-turn rotation about the $x_\beta$-axis by
$$ 
  \rho^\beta\colon \R^n \to \R^n, \qquad 
  x \mapsto (-x_1,  \ldots, -x_{\beta-1}, x_\beta, -x_{\beta+1}, \ldots, -x_n) .
$$
Moreover, denote translations in the form $\tau_v (x) := x+v$. 
Then a Schwarz reflection fixing the line 
$\{q+ t e_\beta\mid t \in \R\}$ through $q \in \R^n$ is given by
\begin{equation} \label{formschwrefl} 
  \tau_{2q - 2 q_\beta  e_\beta} \circ \rho^\beta \, .\end{equation}

From an initial surface $\Sigma_0$ we obtain a complete surface~$\Sigma$
by successive Schwarz reflections across boundary edges.
The surface $\Sigma$ consists of infinitely many isometric copies 
of $\Sigma_0$, each contained in some cube $C^v:=\tau_v(C)$
of the cube tesselation $\{C^v\mid v \in \Z^n\}$ of~$\R^n$. 
We call these isometric copies of~$\Sigma_0$ 
the \emph{surface patches} of~$\Sigma$.

In order to analyze the complete surface $\Sigma$ 
we use a group of symmetries of~$\Sigma$:
\begin{definition} 
  Let $S$ be the subgroup of $\Isom(\R^n)$ generated by 
  the $m$ Schwarz reflections across the edges of
  the Jordan path $\Gamma:= \partial \Sigma_0$. 
\end{definition}
\noindent
Clearly $S$ is a subgroup of the symmetry group of~$\Sigma$.  
It is a proper subgroup if and only if 
the initial surface $\Sigma_0$ is invariant 
under a nontrivial symmetry $s\not\in S$ of $C$.
In the present section we will analyze $S$ and relate elements of $S$ 
to surface patches of $\Sigma$.

To discuss generators of $S$, consider an edge of $\Gamma$
in the $\beta$-direction.  Its midpoint $q$ has coordinates 
\begin{equation*} 
  q_\beta=0\qquad\text{ and }\qquad 
  q_\alpha\in \bigl\{\pm\textstyle\frac12\bigr\}\text{ for } \alpha\not=\beta.
\end{equation*}
Setting $u:=2q\in\Z^n$ in \eqref{formschwrefl} we find that the edge
gives rise to a generator of form
\begin{equation} \label{formgenS} 
  \{\tau_{u} \circ \rho^\beta\mid 
     u_\beta=0,\, u_\alpha\in\{\pm 1\} \text{ for all } \alpha\not=\beta\}.
\end{equation} 

For any symmetry $\rho$ of the cube $C$ it is easy to check 
$$ 
  \rho \circ \tau_v = \tau_{\rho(v)} \circ \rho.
$$ 
Consequently, for two symmetries $\rho$ and $\sigma$ of $C$, and $u,v \in \Z^n$,
\begin{equation} \label{formcomprefl} 
  \tau_{u} \circ \rho \circ \tau_{v} \circ \sigma
  = \tau_{u+ \rho (v)} \circ (\rho\circ \sigma).
\end{equation}
This, together with \eqref{formgenS}, 
means that each element of $S$ has the form
\begin{equation} \label{structureS}
  \tau_v \circ \rho
\end{equation}
 with $v\in \Z^n$ and $\rho \in \langle \rho^1, \ldots, \rho^n \rangle$. 

\subsection{$S$ as a semidirect product}
In order to identify $S$ with a subgroup of a semidirect product
let us first identify the rotation $\rho^\beta$ with the element 
\begin{equation}\label{rhodef}
  \rho^\beta \in \Z_2^n,\quad\text{ with components }\quad 
  \rho^\beta_\beta = 0 \quad\text{and}\quad  
  \rho^\beta_\alpha =1 \text{ for }\alpha \neq \beta. 
\end{equation}
Here $\rho\in\Z_2^n$ acts on $\R^n$ by the coordinatewise sign change
$x\mapsto(-1)^\rho x: 
= \bigl( (-1)^{\rho_\alpha} x_\alpha \bigr)_{1\leq \alpha\leq n}$.
Note that composition of two rotations $\rho^{\beta_1} ,\rho^{\beta_2}$ 
agrees with addition in $\Z_2^n$. 

Furthermore we define the group
$$ H:= \langle \rho^1, \ldots, \rho^n\rangle\subset\Z_2^n,$$ 
where the composition is addition in $\Z_2^n$. 
For later use we claim 
\begin{equation} \label{formH} 
  H=\left \{ \begin{array}{ccl} \Z_2^n 
       & \mbox{ for } & n \mbox{ even,}\vspace{0.2 cm}\\
  \{ \rho \in \Z_2^n \, | \, \sum_{\alpha=1}^n \rho_\alpha =_{\Z_2} 0 \} 
       & \mbox{ for } & n \mbox{ odd.}  \end{array} \right .
\end{equation}
Indeed, for $n$ even, 
the relation ''$\subseteq$'' is clear from the definition of $H$.
To verify the same relation for $n$ odd, recall from \eqref{rhodef}
that each generator $\rho^\beta$ has vanishing component sum, 
which is preserved under composition.
The other relation ''$\supseteq$'' can be verified by writing 
generators of the right-hand side as a sum of suitable $\rho^\beta$'s. 
%

Now we define 
$$ G:= \Z^n \rtimes H, 
$$ 
where the group operation is
\begin{equation}\label{compositionG} 
  (u, \rho) \circ_G (v, \sigma) 
  = \bigl(u +_{\Z_n} (-1)^{\rho} v,\; \rho +_{\Z_2^n} \sigma \bigr).
\end{equation} 

Due to \eqref{formcomprefl}, \eqref{structureS}, and \eqref{compositionG},
we can identify $S$ with a subgroup of $G$ by the injective homomorphism 
$S \to G$, $\tau_v \circ \rho \mapsto (v,\rho) $. 
Let us refer to $T(v,\rho):=v$ as the \emph{translational part} 
of $(v,\rho)\in S$ and to $R(v,\rho):=\rho$ as its \emph{rotational part}.
In terms of the identification we can state:
\begin{lemma} \label{lepU} 
  $S$ is a subgroup of 
  $$ U:= \{(v,v\modtwo)\mid v \in \Z^n, v \modtwo \in H \}\subset G \: .$$
\end{lemma}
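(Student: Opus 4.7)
The plan is to exhibit $U$ as a subgroup of $G$ which contains all Schwarz reflection generators of~$S$. Since $S$ is by definition the subgroup generated by those reflections, it will follow at once that $S\subseteq U$.

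First I would verify that $U$ is closed under the group operation in~$G$. Let $(u,\rho)$ and $(v,\sigma)$ lie in~$U$, so $u\modtwo=\rho$, $v\modtwo=\sigma$, and $\rho,\sigma\in H$. By \eqref{compositionG}, the composition has translational part $u+(-1)^\rho v$ and rotational part $\rho+\sigma$. The key point is that the coordinate-wise sign flips encoded by $(-1)^\rho$ preserve parity, so
\[
  \bigl(u+(-1)^\rho v\bigr)\modtwo \;=\; (u+v)\modtwo \;=\; \rho+\sigma,
\]
and $\rho+\sigma\in H$ because $H$ is a subgroup of~$\Z_2^n$. Closure under inversion follows from the same parity-preservation together with the semidirect-product formula $(v,\rho)^{-1}=\bigl(-(-1)^\rho v,\rho\bigr)$.

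Second, I would check that each of the $m$ generators described in~\eqref{formgenS} lies in~$U$. Such a generator has the form $(u,\rho^\beta)$ with $u_\beta=0$ and $u_\alpha\in\{\pm1\}$ for every $\alpha\neq\beta$. Reducing modulo~$2$ yields a vector with a $0$ in slot~$\beta$ and a $1$ in every other slot, matching exactly the definition of $\rho^\beta$ in~\eqref{rhodef}. Together with $\rho^\beta\in H$, this gives $(u,\rho^\beta)\in U$.

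Overall the argument is a short induction on word length, and I do not anticipate any serious obstacle; the calculation essentially boils down to the observation that sign changes preserve residues modulo~$2$. The real content of the lemma is thus the built-in compatibility $T(s)\equiv R(s)\pmod 2$ for every $s\in S$, which is what will tie the translational part of elements of~$S$ to~$H$ in the sequel.
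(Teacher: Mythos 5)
Your proposal is correct and follows essentially the same route as the paper: show $U$ is closed under the composition \eqref{compositionG} (via the observation that sign flips preserve parity, so translational part and rotational part stay congruent mod~$2$), and check the generators \eqref{formgenS} lie in $U$. The extra inversion check you include is fine but not strictly needed, since the Schwarz reflections are involutions, so every element of $S$ is already a product of generators; the paper simply omits it.
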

\begin{proof}
As \eqref{compositionG} shows, $U$ is closed under the composition of $G$. 
Furthermore the generators $(u,\rho^\beta)$ of $S$ 
satisfy $u \modtwo = \rho^\beta$,
see \eqref{formgenS} and \eqref{rhodef}, so they are elements of $U$. 
Altogether we get $S \subset U$.
%
\end{proof}

Now we are ready to prove $S$ can be identified with the set of surface patches.
We define a map $\Phi$ from $S$ to the set of surface patches by
associating to each symmetry $s\in S$ the surface patch $s(\Sigma_0)$.
\pagebreak[3]
\begin{proposition} \label{propbijpatches}
  $\Phi$ is a bijection between the group $S$ and the set of
  surface patches of~$\Sigma$.
\end{proposition}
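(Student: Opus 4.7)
Surjectivity of $\Phi$ is essentially built into the construction: $\Sigma$ is obtained from $\Sigma_0$ by iterated Schwarz reflections across already-generated straight boundary edges, and every such reflection lies in $S$, so each patch has the form $s(\Sigma_0)$ with $s\in S$. The entire content of the proposition is therefore injectivity. Writing $s(\Sigma_0)=t(\Sigma_0)$ as $(t^{-1}\!\circ s)(\Sigma_0)=\Sigma_0$, it is enough to show that any $s\in S$ with $s(\Sigma_0)=\Sigma_0$ is the identity.

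\textbf{Key steps.} By the structural description \eqref{structureS} I would write $s=\tau_v\circ\rho$ with $v\in\Z^n$ and $\rho\in H$. The first step is to force $v=0$ using the interior hypothesis. Every $\rho\in H$ acts as a sign change on the coordinates and hence preserves the closed cube $C$, so $s(C)=C^v$ and $s(C^{\circ})=(C^v)^{\circ}$. Since $s$ is a homeomorphism, $s(\accentset{\circ}{\Sigma}_0)=\accentset{\circ}{\Sigma}_0$, and by definition $\accentset{\circ}{\Sigma}_0$ is a nonempty subset of $C^{\circ}=(-\tfrac12,\tfrac12)^n$. Consequently $(C^v)^{\circ}\cap C^{\circ}\supset\accentset{\circ}{\Sigma}_0\neq\emptyset$; since distinct integer translates of the open unit cube are disjoint, this forces $v=0$. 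For the second step I would invoke Lemma~\ref{lepU}: as $s=(v,\rho)\in S\subset U$, its rotational part must satisfy $\rho=v\modtwo$, and $v=0$ therefore gives $\rho=0$. Hence $s=\id$.

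\textbf{Main obstacle.} The only delicate point is ruling out $v\neq 0$ in Step~1. A priori a rotation $\rho\in H$ might send $\Sigma_0$ into a neighbouring cube while a suitable translation $\tau_v$ brings it back onto $\Sigma_0$. What prevents this scenario is precisely the strict-interior condition $\accentset{\circ}{\Sigma}_0\subset(-\tfrac12,\tfrac12)^n$ embedded in the definition of an initial surface: it guarantees that $\Sigma_0$ genuinely occupies $C^{\circ}$, so it cannot be slid to an adjacent cube. Once $v=0$ is secured, the semidirect-product constraint captured by Lemma~\ref{lepU} removes the residual possibility of a nontrivial rotational stabilizer for free, because in $U$ the $\Z_2^n$-part is determined by the translational part. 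Neither step involves any calculation beyond this; the conceptual weight of the argument is entirely carried by the definition of an initial surface and by Lemma~\ref{lepU}.
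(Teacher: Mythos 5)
Your injectivity argument is sound and is essentially the paper's own, repackaged as a stabilizer computation: the strict-interior hypothesis on $\Sigma_0$ forces the translational part $v$ to vanish, and Lemma~\ref{lepU} then kills the rotational part since $\rho=v\modtwo$. That is a perfectly good variant of the paper's direct argument that distinct $(u,u\modtwo)$, $(v,v\modtwo)$ send $\Sigma_0$ into distinct cubes $C^u\neq C^v$.

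The gap is in surjectivity, and it is not a cosmetic one. You write that "every such reflection lies in $S$," but this is exactly what has to be proved and is in fact the bulk of the paper's proof. By definition $S$ is generated only by the $m$ Schwarz reflections across the edges of the \emph{original} Jordan path $\Gamma$. The construction of $\Sigma$, however, reflects across boundary edges of successive copies $(s_{g_{j-1}}\circ\cdots\circ s_{g_1})(\Gamma)$, i.e.\ across lines $g_j$ that are generally not edges of $\Gamma$, so the reflections $s_{g_j}$ are not among the declared generators. One must show either that each $s_{g_j}$ nevertheless lies in $S$, or (as the paper does) that the full composition $s_{g_k}\circ\cdots\circ s_{g_1}$ can be rewritten as a word in the generators. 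The paper accomplishes this via the commutation relation $s_g\circ s_h=s_h\circ s_{s_h(g)}$ for Schwarz reflections about coordinate-direction lines; an equivalent route is the conjugation identity $s_{s'(h)}=s'\circ s_h\circ (s')^{-1}$ for $s'\in S$ and $h$ an edge of $\Gamma$, which by induction on $j$ shows $s_{g_j}\in S$ and hence $s_{g_k}\circ\cdots\circ s_{g_1}\in S$. Without one of these arguments, the surjectivity of $\Phi$ is asserted rather than established, so the proof is incomplete at precisely the point the paper spends its effort.
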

\begin{proof}
  By Lemma~\ref{lepU} two distinct elements $s_1 \neq s_2$ of $S$ are 
  of the form $s_1=(u,u\modtwo)$ and $s_2 = (v,v\modtwo)$ with $u \neq v$. 
  As $u$ and $v$ represent translational parts, 
  $s_1$ maps $\Sigma_0$ to a surface patch in a cube $C^u$, 
  and $s_2$ maps $\Sigma_0$ to a surface patch in $C^v \neq C^u$.  
  Hence $\Phi(s_1) \neq \Phi(s_2)$ and $\Phi$ is injective.

Let $\tilde{\Sigma}_0$ be a surface patch.  
By definition of $\Sigma$  
there exist Schwarz reflections $s_{g_1}, \ldots, s_{g_k}$ 
with $(s_{g_k} \circ \ldots \circ s_{g_1}) (\Sigma_0) = \tilde{\Sigma}_0$,
subject to the following:
The line $g_j$ is fixed under $s_{g_j}$,
points in a coordinate direction, and contains a boundary edge 
of $(s_{g_{j-1}} \circ \ldots \circ s_{g_1})(\Sigma_0)$. 
It can be checked that any two Schwarz reflections about lines $g,h$
pointing in coordinate directions satisfy the commutation relation 
$s_g \circ s_{{h}} = s_h \circ s_{s_h({g})}$. 
Iterated application of the relation proves the existence of generators  
$s_1, \ldots, s_k$ of $S$ such that 
$s_{g_{k}} \circ \ldots \circ s_{g_1} = s_1 \circ \ldots \circ s_k \in S$. 
So indeed $\tilde{\Sigma}_0$ is the image of $\Sigma_0$ under a symmetry in $S$ and $\Phi$ is surjective. 
\end{proof}
\noindent  From now on we will identify the
symmetry group $S$ with the subgroup $S \subset U$ and with 
the surface patches of~$\Sigma$.

\subsection{Location of self-intersections}

By Prop.~\ref{propbijpatches}, 
elements $s = (v,\rho) \in S$ are in 1-1 relation to surface patches
$\Phi(s)=(\tau_v \circ \rho)(\Sigma_0)$ contained in the cube $C^v$.
On the other hand, Lemma~\ref{lepU} implies that the
translational part $v$ of $s$ determines its rotational part $\rho=v\modtwo$.
We conclude:  
\begin{theorem} \label{th:atmostone}
  Each cube $C^v$ contains at most one surface patch of $\Sigma$.
  Specifically, $C^v$ contains a patch if and only if $(v,v\modtwo) \in S$.
\end{theorem}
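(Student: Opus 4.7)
The plan is to combine the two structural results about $S$ that were just established: the bijection $\Phi\colon S\to\{\text{patches}\}$ from Proposition~\ref{propbijpatches} and the normal form $S\subset U=\{(v,v\modtwo)\}$ from Lemma~\ref{lepU}. The theorem is then essentially a bookkeeping statement about which cube $C^v$ contains the patch $\Phi(s)$.

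First I would pin down the assignment of patches to cubes. For $s=(v,\rho)\in S$ the patch is $\Phi(s)=\tau_v(\rho(\Sigma_0))$. Since $\rho\in H\subset\Z_2^n$ acts by coordinatewise sign changes, it preserves the symmetric cube $C=[-\tfrac12,\tfrac12]^n$, and therefore $\rho(\Sigma_0)\subset C$, which gives $\Phi(s)\subset\tau_v(C)=C^v$. Hence the translational part $T(s)=v$ completely determines which cube of the tessellation contains $\Phi(s)$.

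Next I would handle uniqueness, i.e.\ the ``at most one'' part. Suppose two patches both lie in $C^v$. By the bijection they come from distinct $s_1,s_2\in S$, and by the previous step $T(s_1)=T(s_2)=v$. Now the key input from Lemma~\ref{lepU} is that the rotational part of an element of $S$ is forced to be $v\modtwo$; so $s_1=(v,v\modtwo)=s_2$, contradicting distinctness. Thus at most one patch occupies $C^v$.

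Finally I would deduce the ``if and only if''. The forward implication is immediate from the previous paragraph: if $C^v$ contains a patch, that patch is $\Phi(s)$ for some $s\in S$ with $T(s)=v$, and by Lemma~\ref{lepU} this $s$ must be $(v,v\modtwo)$. Conversely, if $(v,v\modtwo)\in S$, then applying $\Phi$ to it produces a patch lying in $C^v$ by the first step. I do not anticipate a real obstacle here; both Proposition~\ref{propbijpatches} and Lemma~\ref{lepU} do the heavy lifting, and the only thing to verify carefully is that elements of $H$ genuinely fix the cube $C$, which is immediate from their action as sign changes on symmetric coordinates.
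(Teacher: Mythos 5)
Your proof is correct and follows essentially the same route the paper takes: combine Proposition~\ref{propbijpatches} (the bijection $\Phi$ between $S$ and the surface patches, with $\Phi(s)$ landing in the cube $C^{T(s)}$) with Lemma~\ref{lepU} (the translational part forces the rotational part to be $v \modtwo$). The only difference is that you spell out explicitly why $\Phi(s)\subset C^v$, namely that $\rho\in H\subset\Z_2^n$ acts by sign changes and so preserves the symmetric cube, which the paper leaves implicit.
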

\noindent
We call a cube $C^v$ which contains a surface patch a \emph{filled cube};
we use this terminology also for elements $(v,v\modtwo) \in S$.

By Thm.~\ref{th:atmostone}, self-intersections of $\Sigma$ 
cannot occur in the interior of any cube $C^v$.
Therefore, self-intersections of $\Sigma$ can only occur on cube boundaries,
and so depend on the Jordan path $\Gamma= \partial \Sigma_0$ alone. 
As $\Gamma$ consists of a union of edges, if self-intersections occur at all
then they occur on entire edges, in particular on their bounding vertices.
Moreover, at each vertex, the angle of the contour is $90^\circ$,
and upon Schwarz reflection the
incident surface patches arise in multiples of four. 
We obtain the following characterization of embeddedness: 
\begin{corollary} \label{co:intersectionvertex}
 \ia\ The surface $\Sigma$ is embedded if and only if it has no 
 self-intersections at the vertices 
 of the cube tessellation $\{C^v\mid v\in \Z^n\}$ of $\R^n$.\\
  \ii\
  A vertex $p$ of the cube tessellation is a self-intersection point 
  of~$\Sigma$ if and only if there are at least eight surface patches 
  of~$\Sigma$ which contain $p$ as a boundary point.
\end{corollary}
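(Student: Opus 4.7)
The plan is to combine \thm{atmostone} with a local analysis at edges and vertices of the cube tessellation, exploiting the fact that $\Gamma$ lies in the $1$-skeleton and that each patch's interior lies strictly inside its cube. Intersections between distinct patches are thereby confined to the edge skeleton, which reduces everything to local counting around vertices.

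For part~(i), suppose $\Sigma$ admits distinct patches $P \neq P'$ meeting at some point $q$. By \thm{atmostone}, $P$ and $P'$ lie in distinct cubes $C^u \neq C^v$, so $q \in C^u \cap C^v \subset \partial C^u$. The interior condition on initial surfaces then forces $q \in \partial P \cap \partial P'$, and since both $\partial P$ and $\partial P'$ are Jordan edge-loops of the cube, $q$ lies on some edge $e$. If $q$ is an interior point of $e$, then each Jordan loop, being a union of full edges, must contain all of $e$ and hence both endpoints of $e$; thus in either case some tessellation vertex is a self-intersection point. The reverse direction is trivial.

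For part~(ii), fix a vertex $p$ and let $\mathcal{P}_p$ be the set of patches with $p \in \partial P$. For each such patch $P$, the Jordan loop $\partial P$ passes through $p$ via two edges meeting at a right angle in coordinate directions $\alpha, \beta$. The Schwarz reflections $s_\alpha, s_\beta$ across these two edges both fix $p$ and commute as half-turns about perpendicular axes through $p$, so they generate a Klein four-group $\{\id, s_\alpha, s_\beta, s_\alpha s_\beta\} \subset S$. By \thm{atmostone} the four images of $P$ under this group lie in four distinct cubes meeting at $p$ and are thus four distinct elements of $\mathcal{P}_p$; and by the Schwarz reflection principle, valid for each of our surface classes, their union is a smooth local sheet in a neighborhood of $p$ with $p$ as an interior point. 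Consequently $\mathcal{P}_p$ partitions into four-patch sheets, and $|\mathcal{P}_p|$ is a multiple of $4$.

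If $|\mathcal{P}_p| \le 4$, then $\Sigma$ is locally either empty or a single smooth sheet at $p$, and hence embedded near $p$. If $|\mathcal{P}_p| \ge 8$, then at least two distinct smooth sheets pass through $p$ and meet at the common point $p$, so $\Sigma$ fails to be embedded there; this proves (ii). I expect the main technical obstacle to be verifying that the four-patch union is genuinely smooth at the corner $p$: smoothness across each of the two reflection edges follows directly from the Schwarz reflection principle, but smoothness at $p$ itself requires either analytic continuation around the vertex in the minimal case, or a direct combinatorial check in the polygonal and discrete-minimal cases.
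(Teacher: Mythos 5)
Your argument is correct and mirrors the paper's brief discussion preceding the corollary: \thm{atmostone} confines intersections to cube boundaries, edge loops push them to vertices, and the Klein four-group of Schwarz reflections at a vertex forces incident patches to come in multiples of four, with each quadruple forming a single embedded sheet. The smoothness concern you raise at the end is not actually needed: local embeddedness of the four-patch union near $p$ already follows from \thm{atmostone} (the four patches occupy four distinct cubes incident to $p$ and so can only meet along shared boundary edges and vertices), without requiring analytic regularity of the glued surface at the vertex itself.
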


For odd dimension $n$ we can  combine Lemma~\ref{lepU} with \eqref{formH} 
to see that half of the cubes must be empty (not filled):
Equivalently, this can be seen geometrically, 
by considering a checkerboard black-and-white
colouring of the cube tessellation.  
In odd dimensions, 
Schwarz reflection respects the colouring and so one colour remains empty.  
This implies in particular that our problem for $n=3$ has an affirmative 
answer, no matter which Jordan path is considered:
\begin{proposition} \label{dimension3}
  In $\R^3$ each extended surface $\Sigma$ is embedded.
\end{proposition}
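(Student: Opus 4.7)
The plan is to combine the two parts of Corollary~\ref{co:intersectionvertex} with the parity constraint on filled cubes coming from Lemma~\ref{lepU} and \eqref{formH}. By part~\ia\ of the corollary it suffices to show that no vertex of the cube tessellation is a self-intersection point, and by part~\ii\ this in turn reduces to showing that fewer than eight surface patches meet at any vertex $p$. Since every patch incident to $p$ lies in one of the $2^n=8$ cubes of the tessellation containing $p$, the task becomes an upper bound on the number of \emph{filled} cubes among these eight.

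I would next introduce the checkerboard parity $\pi\colon\Z^n\to\Z_2$, $\pi(v)=\sum_{\alpha}v_\alpha\modtwo$, which colours $C^v$ according to the parity of $\pi(v)$. By Lemma~\ref{lepU}, any filled cube $(v,v\modtwo)\in S$ satisfies $v\modtwo\in H$. Specializing to $n=3$, which is odd, \eqref{formH} says that $H$ consists exactly of those $\rho\in\Z_2^3$ with $\rho_1+\rho_2+\rho_3\equiv 0\modtwo$. Hence every filled cube $C^v$ has $\pi(v)=0$, i.e., filled cubes occur only on one of the two colour classes; the other colour class is entirely empty.

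The last step is the local count at a vertex. The eight cubes meeting any vertex $p$ are of the form $C^{v_0+\epsilon}$ with $\epsilon\in\{0,-1\}^3$, and as $\epsilon$ ranges over this set, $\pi(v_0+\epsilon)=\pi(v_0)+\sum_\alpha\epsilon_\alpha\modtwo$ takes each value in $\Z_2$ exactly four times. Consequently at most four of the eight cubes around $p$ can be filled, so at most four patches of~$\Sigma$ contain $p$ as a boundary point. Since $4<8$, Corollary~\ref{co:intersectionvertex}\ii\ rules out a self-intersection at $p$, and then part~\ia\ yields the embeddedness of $\Sigma$.

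There is no real obstacle here beyond assembling the pieces correctly; the main conceptual point is that for odd $n$ the relation $v\modtwo\in H$ forces an $S$-invariant two-colouring on the set of filled cubes, which immediately halves the count at every vertex. For $n=3$ the resulting bound of four is already below the threshold of eight, whereas for even $n\ge 4$ no such parity obstruction exists and the general theory of Sections~\ref{se:quotients}--\ref{se:charemb} becomes necessary.
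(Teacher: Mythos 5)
Your proposal is correct and takes the same route as the paper: both use Corollary~\ref{co:intersectionvertex} together with the parity constraint on filled cubes that follows from Lemma~\ref{lepU} and \eqref{formH} (the checkerboard colouring argument stated just before the proposition). You have simply spelled out in full the local count at a vertex that the paper compresses into ``only half of them can be filled, a contradiction.''
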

\begin{proof}
  Suppose that $\Sigma$ is not embedded.
  By Cor.~\ref{co:intersectionvertex} 
  there is a vertex $p$ such that at least eight distinct 
  surface patches of $\Sigma$ contain~$p$.
  On the other hand, in dimension $n=3$ there are eight cubes incident to $p$,
  and only half of them can be filled, a contradiction.
\end{proof}

For higher dimension, however, the argument given in the proof fails, 
and in even dimension all cubes may be filled anyway. 
For instance, in $\R^4$, possibly each of the $16$ cubes 
incident to a vertex is filled.

\section{The lattice of $\Sigma$ and the quotient of $S$} 
\label{se:quotients}

Our goal is to read off the condition of Cor.~\ref{co:intersectionvertex} 
from the given Jordan path $\Gamma$.
For that end we need to study the lattice generated by $\Gamma$ in detail.

\subsection{The lattice of $\Sigma$}
We define the \emph{lattice $\Lambda(\Sigma)$} of the extended surface $\Sigma$ by
$$\Lambda(\Sigma) :=  \{ v \in \Z^n \, | \, \tau_v(\Sigma)=\Sigma \} \, .$$ 
Clearly, $\Lambda(\Sigma)$ is an additive group. 
We observe that for $v\in\Lambda(\Sigma)$
the translations $\tau_v$ are elements from $S$, 
and that they may change the orientation of~$\Sigma$
(see Sect.~\ref{sectionorientable}).

\begin{lemma} \label{latticeel}
  If $(u,\rho), (v,\rho) \in S$ then $u-v \in \Lambda(\Sigma)$.
\end{lemma}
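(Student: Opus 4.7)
The plan is to compute the product $(u,\rho) \circ_G (v,\rho)^{-1}$ in the semidirect product $G = \Z^n \rtimes H$ and verify that it lies in the translational subgroup.

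First I would recall that $S$ is a subgroup of the isometry group generated by Schwarz reflections, and that every Schwarz reflection is a symmetry of the extended surface $\Sigma$ (since $\Sigma$ is built precisely by iterating these reflections on $\Sigma_0$). Hence every element of $S$ preserves $\Sigma$. In particular, if $(u-v, 0)$ can be shown to lie in $S$, then the corresponding isometry $\tau_{u-v} \circ \id = \tau_{u-v}$ preserves $\Sigma$, giving $u - v \in \Lambda(\Sigma)$ by definition.

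Next I would produce the inverse of $(v,\rho)$ in $G$ using the composition rule \eqref{compositionG}. Solving $(v,\rho) \circ_G (w,\sigma) = (0,0)$ yields $\sigma = \rho$ (since $H \subset \Z_2^n$ is $2$-torsion) and $w = -(-1)^\rho v$. Therefore
\[
  (v,\rho)^{-1} = \bigl(-(-1)^\rho v,\; \rho\bigr).
\]
Plugging this into the group law and using $(-1)^{2\rho} = 1$ componentwise, I compute
\[
  (u,\rho) \circ_G (v,\rho)^{-1}
  = \bigl(u + (-1)^\rho\bigl(-(-1)^\rho v\bigr),\; \rho + \rho\bigr)
  = (u - v,\; 0).
\]
Since $(u,\rho)$ and $(v,\rho)$ belong to the subgroup $S$, so does their composition with the inverse, i.e., $(u-v, 0) \in S$.

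Finally, the element $(u-v,0) \in S$ corresponds under the identification of Sect.~\ref{secgroupS} to the pure translation $\tau_{u-v}$. As noted in the first step, this isometry preserves $\Sigma$, so $u - v \in \Lambda(\Sigma)$, as required. There is no real obstacle here beyond the bookkeeping in the semidirect product; the only point to be careful about is that $(-1)^\rho$ behaves as an involution on $\Z^n$, which is exactly what makes the translational part of $(v,\rho)^{-1}$ cancel correctly against that of $(u,\rho)$.
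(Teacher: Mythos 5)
Your proof is correct and takes essentially the same route as the paper: the paper's terse one-liner ("the surface patches in $C^u$ and $C^v$ agree up to the translation $\tau_{u-v}$") rests on exactly the fact you compute explicitly, namely that $(u,\rho)\circ_G(v,\rho)^{-1}=(u-v,0)\in S$ and that $S$ is a group of symmetries of $\Sigma$. Your version simply spells out the semidirect-product bookkeeping that the paper leaves implicit.
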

\begin{proof}
As the rotational parts of $(u,\rho)\in S$ and $(v,\rho)\in S$ agree, 
the corresponding surface patches in $ C^{u}$ and $ C^{v}$ 
agree up to the translation $\tau_{u-v}$, that is, 
$u - v \in \Lambda(\Sigma)$.
\end{proof}

Our Jordan paths $\Gamma$ contain a pair of distinct edges for each 
coordinate direction.  
The corresponding Schwarz reflections are of the form
$(u,\rho)$, $(v,\rho)$ in $S$ with $u-v\in (2\Z)^n$, see~\eqref{formgenS}. 
Thus the lemma exhibits at least $n$ nontrivial elements
of $\Lambda(\Sigma)$ which are contained in $(2\Z)^n$. 
While in general $\Lambda(\Sigma)$ does not contain the entire set $(2\Z)^n$, 
the following can be asserted independently of $\Gamma$:
\begin{theorem} \label{4znlattice}
  For each initial surface $\Sigma_0\subset\R^n$ 
  we have $(4\Z)^n \subset \Lambda(\Sigma)$.
\end{theorem}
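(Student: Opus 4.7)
The plan is to establish $(4\Z)^n \subseteq T_S := \{v \in \Z^n : (v,0) \in S\}$; since each $(v,0) \in S$ realizes the translation $\tau_v$ as a composition of Schwarz reflections, which are symmetries of $\Sigma$, we have $T_S \subseteq \Lambda(\Sigma)$, and this suffices.

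First I would derive a conjugation identity. Fix $\alpha \in \{1,\ldots,n\}$. Since $\Gamma$ contains an edge in direction $\alpha$, there is a generator $(u,\rho^\alpha) \in S$ of the form~\eqref{formgenS}. A direct computation using \eqref{compositionG} together with the involution property $(u,\rho^\alpha)^{-1}=(u,\rho^\alpha)$ gives, for any $(v,0)\in S$,
\[
  (u,\rho^\alpha)\,(v,0)\,(u,\rho^\alpha)^{-1} \;=\; \bigl((-1)^{\rho^\alpha} v,\, 0\bigr) \;\in\; S.
\]
Because $(-1)^{\rho^\alpha}$ preserves the $\alpha$-coordinate of $v$ and negates the others, composing this with $(v,0)$ yields $(v+(-1)^{\rho^\alpha} v, 0) = (2 v_\alpha e_\alpha, 0)\in S$. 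Hence, to obtain $(4 e_\alpha,0)\in S$, it is enough to exhibit some $(v,0)\in S$ with $v_\alpha = \pm 2$.

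To produce such $v$ I would distinguish two cases. In the generic case, $\Gamma$ has two edges in some direction $\beta\neq\alpha$ whose midpoints $q_1,q_2$ differ in the $\alpha$-coordinate, and the pair composition $(2q_1-2q_2,0)\in S$ already has $\alpha$-component $\pm 2$. In the exceptional case, for every $\beta\neq\alpha$ all direction-$\beta$ edges of $\Gamma$ share a common $\alpha$-coordinate $\sigma_\beta\in\{\pm\tfrac{1}{2}\}$. The assignment $\beta\mapsto\sigma_\beta$ cannot be constant, for otherwise every non-$\alpha$ edge of $\Gamma$ lies in a single face $x_\alpha=\sigma$ of the cube, and after its first direction-$\alpha$ crossing $\Gamma$ finds only the reverse direction-$\alpha$ edge available in the opposite face, contradicting embeddedness. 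Pick $\beta_1\neq\beta_2$, both in $\{1,\ldots,n\}\setminus\{\alpha\}$, with $\sigma_{\beta_1}\neq\sigma_{\beta_2}$, and form $P:=(u_1,\rho^{\beta_1})(u_2,\rho^{\beta_2})\in S$ from a chosen generator in each direction. Then $P^2$ has rotational part $2(\rho^{\beta_1}+\rho^{\beta_2})=0$, so it is a pure translation, and a short computation gives its $\alpha$-component as $2((u_1)_\alpha-(u_2)_\alpha)=2(2\sigma_{\beta_1}-2\sigma_{\beta_2})=\pm 4$.

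For $n=3$ the element $P^2$ is already $(\pm 4 e_\alpha,0)$, since $\{\alpha,\beta_1,\beta_2\}=\{1,2,3\}$ leaves no other directions. For $n\geq 4$ the translation $P^2$ may carry additional components $c_\gamma\in\{-4,0,4\}$ at indices $\gamma\notin\{\alpha,\beta_1,\beta_2\}$, and these must be cancelled in order to isolate $(\pm 4 e_\alpha,0)$. My plan is to cancel them by applying the generic case to each such direction $\gamma$ separately, producing $(4 e_\gamma,0)\in S$, and then subtracting suitable multiples of these from $P^2$. I expect the main technical step of the proof to be this final combinatorial bookkeeping: using the embeddedness of $\Gamma$ to guarantee that the generic case does apply in each direction needed for the cancellation whenever the exceptional case has been forced on $\alpha$.
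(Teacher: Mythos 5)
Your conjugation identity $(u,\rho^\alpha)(v,0)(u,\rho^\alpha)^{-1} = \bigl((-1)^{\rho^\alpha}v,0\bigr)$ is correct and a nice reduction, and your generic case works. The genuine gap is in the exceptional case for $n\ge 4$. You explicitly leave the ``combinatorial bookkeeping'' undone, and the proposed remedy --- apply the generic case to each extra direction $\gamma$ to produce $(4e_\gamma,0)\in S$ and subtract --- is circular: nothing rules out that the exceptional case also holds for $\gamma$, in which case you would need $(4e_\gamma,0)\in S$ via the very argument you are in the middle of, and you have not shown the iteration terminates.

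The paper avoids the generic/exceptional dichotomy entirely by a \emph{local} choice that you miss: fix a single direction-$\alpha$ edge $e$ of $\Gamma$ and take for $u_1,u_2$ the generators of $S$ coming from the two edges $f,g$ of $\Gamma$ \emph{adjacent} to $e$. Because $f$ and $g$ both emanate from the endpoints of $e$, they share with each other (and with $e$) all coordinates outside the directions $\alpha,\beta_f,\beta_g$; so in your element $P=(u_1,\rho^{\beta_1})(u_2,\rho^{\beta_2})$ every component of $T(P^2)$ outside direction $\alpha$ vanishes automatically, and $P^2=(\pm 4e_\alpha,0)$ for every $n$ (when $f\parallel g$ one even gets $P=(\pm 2e_\alpha,0)$ directly). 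The paper's phrasing via Lemma~\ref{latticeel} --- comparing translational parts of $(u_1,\rho^{\beta_1})(u_2,\rho^{\beta_2})$ and $(u_2,\rho^{\beta_2})(u_1,\rho^{\beta_1})$ --- is the same computation, since $T(P^2)=T(P)-T(P^{-1})$. If you impose the adjacency restriction, your proof closes without any dichotomy or bookkeeping.
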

\begin{proof}
  Our strategy is to use the lemma to prove 
  $\{4e_\beta\mid \beta=1,\ldots,n\}\subset \Lambda(\Sigma)$, 
  which implies the claim.
  By a coordinate renumbering it suffices to verify this for $\beta=1$.

  By definition, $\Gamma$ contains an edge $e$ in the $1$-direction. 
  It has the form
  $$ 
    e= \bigl\{(t,q_2,  \ldots, q_n)  \mid 
       t \in \big[\!\!-\ahalf, \ahalf \big ] \bigr\},
    \text{ where }q_\alpha \in \bigl\{\pm \ahalf \bigr\} 
    \text{ for }2 \leq \alpha \leq n \: .
  $$ 
Let $f$ and $g$ be the two edges of $\Gamma$ incident to $e$, 
where $f$ starts at $\big(\frac{1}{2},q_2, \ldots, q_n\big)$. 
Again by coordinate renumbering, 
we may assume $f$ points in the $2$-direction so that
$$
  f = \bigl \{(\ahalf,t, q_3, \ldots, q_n) \mid  
      t\in\bigl[-\ahalf,\ahalf \bigr] \bigr \}.
$$ 
Let $(u, u\modtwo)\in S$ be the generator of $S$ corresponding to $f$;
according to \eqref{formschwrefl} then $u = (1,0,2 q_3, \ldots, 2 q_n)$.
We distinguish two cases:

\noindent Case 1: If $g$ is parallel to $f$ then
$$
  g = \bigl\{(-\ahalf,t, q_3, \ldots, q_n)  \mid
       t \in \bigl[-\ahalf, \ahalf \bigr] \bigr \},
$$
and we let $(v, v\modtwo)\in S$ 
be the corresponding generator of $S$. By  \eqref{formschwrefl} we find $v =(-1,0,2 q_3, \ldots, 2 q_n)$.
Since $u\modtwo = v\modtwo$ in this case,  
Lemma~\ref{latticeel} implies $ u-v = 2 e_1 \in \Lambda(\Sigma)$, 
and so in particular $4 e_1 \in \Lambda(\Sigma)$.

\noindent Case 2: Suppose $g$ is not parallel to $f$. 
We may assume $g$ points in the $3$-direction.  Then 
$$  
  g = \bigl\{(-\ahalf,q_2, t, q_4, \ldots, q_n)  \mid 
       t \in \bigl[-\ahalf,\ahalf \bigr] \bigr\} 
$$
and again we let $(v, v\modtwo)\in S$ be the corresponding generator of $S$, 
where $ v = (-1,2 q_2, 0, 2 q_4, \ldots, 2 q_n)$. 
As $2 q_i \in \{\pm 1\}$, we find
\begin{equation*}
\begin{split}
 (u, u\modtwo) \circ (v, v \modtwo) &= \bigl(u+(-1)^{u \modtwotin} v, \,
    (u + v)\modtwo\bigr) \\
&= \bigl((2, 2q_2, 2 q_3, 0, \ldots, 0),(u + v)\modtwo\bigr) ,\\
 (v, v\modtwo) \circ (u, u\modtwo) &= \bigl(v+(-1)^{v \modtwotin} u, \,
    (u+v) \modtwo\bigr) \\
&= \bigl((-2, 2q_2, 2 q_3, 0, \ldots, 0),(u + v)\modtwo\bigr) \, .
\end{split}
\end{equation*} 
Lemma~\ref{latticeel} implies $4e_1 \in \Lambda(\Sigma)$.
\end{proof}

\subsection{Quotient groups}  

By Thm.~\ref{4znlattice} we may pass to the quotient under the group $(4\Z)^n$. 
Then the ambient $n$-torus $T^n:=\R^n/(4\Z)^n$
contains the quotient surface $\Sigma^Q:=\Sigma/(4\Z)^n$,
with quotient lattice 
\[
  \Lambda^Q(\Sigma):= \Lambda(\Sigma)/(4\Z)^n
  \;\subset\; \Z^n/(4\Z)^n = \Z_4^n.
\]

In particular, we want to consider quotients of $U$ and its subgroup~$S$.
The subgroup $\{ (w, \id) \mid w \in (4\Z)^n\}$ 
of both $S$ and~$U$ is easily seen to be a normal subgroup,
and so defines quotient groups $U^Q$ and $S^Q$.
We can identify the cosets of $U$ with
\begin{align*} U^Q &:= \{(v\modfour, v\modtwo) \mid v \in \Z^n,\, v\modtwo \in H \} \\
&= \{(v, v\modtwo) \mid v \in \Z_4^n,\, v\modtwo \in H \} \, . 
\end{align*}
The group operation of $U^Q$ is given by \eqref{compositionG},
except that the translational part is taken modulo~$4$;
similarly for the subgroup $S^Q\subset U^Q$.
Recall that generators of $S$ correspond to edges of $\Gamma$. 
Taking their translational part modulo $4$ we obtain generators of~$S^Q$.
By \eqref{formgenS} these generators are of form $(v, \rho^\beta)$ where 
$v_\beta= 0$, while all other coordinates of~$v$ equal $1$ or~$3$.

By passing to the quotient we obtain groups with useful algebraic properties.
For instance, while $U$ is not abelian 
(see the calculation in the proof of Thm.~\ref{4znlattice}, Case~2) 
its quotient $U^Q$ is:
\begin{lemma} \label{commutative}
  The finite group $U^Q$ is abelian and each element of~$U^Q$ 
  is self-inverse; the same holds for its subgroup~$S^Q$.
\end{lemma}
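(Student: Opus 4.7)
My plan is to reduce both claims to a coordinatewise verification using the explicit semidirect product law \eqref{compositionG}, with the understanding that now all translational parts are taken modulo~$4$. Finiteness of $U^Q$ is clear since it is a subset of $\Z_4^n \times \Z_2^n$, and the properties of $S^Q$ will follow immediately from those of $U^Q$ because $S^Q$ is a subgroup.

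For the self-inverse property, I would compute
\[
  (v, v\modtwo) \circ (v, v\modtwo)
  = \bigl(v + (-1)^{v\modtwotin} v,\; 0\bigr),
\]
and check for each coordinate $\alpha$ that the translational component vanishes in $\Z_4$. If $v_\alpha$ is even, then $(-1)^{v_\alpha \modtwotin}=1$, so the $\alpha$-component is $2v_\alpha$, which is divisible by~$4$. If $v_\alpha$ is odd, then $(-1)^{v_\alpha \modtwotin}=-1$ and the component is~$0$. Hence $(v,v\modtwo)$ has order dividing~$2$.

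For commutativity, I would compare
\[
  (u, u\modtwo) \circ (v, v\modtwo)
  = \bigl(u + (-1)^{u\modtwotin} v,\; (u+v)\modtwo\bigr)
\]
with $(v, v\modtwo) \circ (u, u\modtwo)$. The rotational parts agree trivially since $\Z_2^n$ is abelian, so the task reduces to showing that
\[
  u_\alpha + (-1)^{u_\alpha} v_\alpha \;\equiv\; v_\alpha + (-1)^{v_\alpha} u_\alpha \pmod 4
\]
for every coordinate $\alpha$. Splitting into the four parity cases for $(u_\alpha, v_\alpha)$ reduces the identity to the statement that $2k \equiv 0 \pmod 4$ whenever $k$ is even, which is immediate. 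The mixed parity cases give a factor $2u_\alpha$ (or $2v_\alpha$) with the relevant entry even, and the same parity cases give $2u_\alpha \equiv 2v_\alpha \pmod 4$ with both entries odd.

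There is no real obstacle here; the main point to watch is that the group law is not abelian before reduction modulo~$4$, as the proof of Thm.~\ref{4znlattice}, Case~2, exhibits. The parity of the entries of $v$ is precisely what determines the sign in $(-1)^{v\modtwotin}$, and it is the coincidence of this parity with the residue of $v_\alpha$ modulo~$2$ that makes both calculations collapse after passage to $\Z_4$. I would therefore keep the coordinatewise case analysis explicit to make this mechanism transparent.
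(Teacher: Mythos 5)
Your proof is correct and takes essentially the same approach as the paper: both establish commutativity by a coordinatewise comparison of translational parts modulo~$4$ using the parity-splitting observation, and the paper explicitly offers your direct computation of $(v,v\modtwo)\circ(v,v\modtwo)$ as an alternative to its primary argument for self-inverseness (which instead notes that the Schwarz-reflection generators are involutions and an abelian group generated by involutions consists of involutions).
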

\begin{proof}
  Consider two arbitrary elements $(u, u\modtwo)$ 
  and $(v, v\modtwo)$ of $U^Q$ and their compositions 
\begin{equation}\label{uvversusvu}
\begin{split}
 (u, u\modtwo) \circ (v, v \modtwo) &= \bigl(u+(-1)^{u \modtwotin} v, \,
    (u + v)\modtwo\bigr)  ,\\
 (v, v\modtwo) \circ (u, u\modtwo) &= \bigl(v+(-1)^{v \modtwotin} u, \,
    (u+v) \modtwo\bigr) \, .
\end{split}
\end{equation}
  Thus $U^Q$ is abelian if 
\begin{equation}\label{eqabelian}  
  u- (-1)^{v\modtwotin} u - \bigl(v - (-1)^{u\modtwotin}  v\bigr) =  0 
\end{equation}  
holds in $\Z_4^n$ for all $u,v \in \Z_4^n$.
Now the $\alpha$-coordinate of $u- (-1)^{v\modtwotin} u$ is, in $\Z_4$,
$$
  \left ( u- (-1)^{v\modtwotin}  u  \right )_\alpha  = \begin{cases}
  0  & \mbox{for } u_\alpha\modtwo = 0 \mbox{ or } v_\alpha\modtwo = 0\, , \\
  2  &
  \mbox{for } u_\alpha\modtwo = v_\alpha\modtwo = 1 .
  \end{cases} 
$$ 
For each $\alpha$, the right hand side is symmetric in $u$ and $v$,
and so indeed \eqref{eqabelian} holds.

To prove each element $(v, v\modtwo) \in U^Q$ is self-inverse, 
note $U^Q$ is generated by elements corresponding to Schwarz reflections 
across edges of~$C$.  Clearly, Schwarz reflection is self-inverse.
But $U^Q$ is an abelian group, so all its elements are self-inverse.
Alternatively, using \eqref{uvversusvu} for $u=v$ gives the element
$\bigl(v+_{\Z_4^n} (-1)^{v\modtwotin}  v , 0\bigr)$
whose components can be seen to vanish.

Since $S^Q$ is a subgroup of $U^Q$, these properties hold for $S^Q$ as well.
\end{proof}

Let us finally consider the order
\begin{equation}
   |U^Q| = \bigl| \{(v, v\modtwo) \mid 
        v \in \Z_4^n,\, v\modtwo \in H \}\bigr| .
\end{equation}
By \eqref{formH}, for $n$ even $H=\Z_2^n$ and so 
$|U^Q| = |\Z_4^n| = 4^n = 2^{2n}$; for $n$ odd, however,
the constraint stated in \eqref{formH} gives 
$|U^Q| = \frac{1}{2}|\Z_4^n| = 2^{2n-1}$.  We have proved:
\begin{proposition} \label{pr:order}
  The order $|U^Q|$ is $2^{2n}$ for $n$ even and $2^{2n-1}$ for $n$ odd.
  Consequently, the number $|S^Q|$ of filled cubes in $T^n$ is a power of two, 
  where $|S^Q|\leq 2^{2n}$ in even dimensions 
  and $|S^Q|\leq 2^{2n-1}$ in odd dimensions.
\end{proposition}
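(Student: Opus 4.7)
The plan is to read off $|U^Q|$ directly from the explicit description
$U^Q = \{(v, v\modtwo) \mid v\in\Z_4^n,\ v\modtwo\in H\}$
given just before the statement, and then deduce the assertion about $|S^Q|$ from elementary group theory.

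First, I would observe that the map $(v,v\modtwo)\mapsto v$ is a bijection between $U^Q$ and the set $\{v\in\Z_4^n\mid v\modtwo\in H\}$, since the rotational part $v\modtwo$ is determined by $v$. Hence $|U^Q|$ equals the number of $v\in\Z_4^n$ whose reduction mod $2$ lies in $H$. The reduction map $\Z_4^n\to\Z_2^n$, $v\mapsto v\modtwo$, is a surjective group homomorphism with kernel $(2\Z_4)^n$ of cardinality $2^n$, so each fibre has size $2^n$ and the number of admissible $v$ is $2^n\cdot|H|$.

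Next, I would invoke formula \eqref{formH}: for $n$ even, $H=\Z_2^n$ has order $2^n$, giving $|U^Q|=2^n\cdot 2^n=2^{2n}$; for $n$ odd, $H$ is the index-$2$ subgroup of $\Z_2^n$ cut out by the parity condition $\sum_\alpha\rho_\alpha=_{\Z_2}0$, so $|H|=2^{n-1}$ and $|U^Q|=2^n\cdot 2^{n-1}=2^{2n-1}$. This settles the first assertion.

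For the second assertion, since $S^Q$ is a subgroup of $U^Q$, Lagrange's theorem gives $|S^Q|\mid|U^Q|$. Because $|U^Q|$ is a power of $2$, so is every one of its divisors, hence $|S^Q|$ is a power of $2$. The inequality $|S^Q|\le|U^Q|$ is immediate from the inclusion, and substituting the two cases yields $|S^Q|\le 2^{2n}$ for $n$ even and $|S^Q|\le 2^{2n-1}$ for $n$ odd. Finally, by Prop.~\ref{propbijpatches} (applied in the quotient $T^n$) the elements of $S^Q$ are in bijection with the filled cubes in $T^n$, so $|S^Q|$ indeed counts them.

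There is no real obstacle here: the structural work was done in Lemma~\ref{lepU}, formula \eqref{formH}, and Lemma~\ref{commutative}, and the proposition reduces to a counting argument plus Lagrange. The only point one should be careful about is to verify that the reduction $\Z_4^n\to\Z_2^n$ is a group homomorphism with fibres of uniform size $2^n$, which is why the count factors as $2^n\cdot|H|$ rather than requiring a direct enumeration.
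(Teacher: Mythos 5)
Your proposal is correct and essentially reproduces the paper's own argument: both proofs read $|U^Q|$ off from the explicit description $U^Q=\{(v,v\modtwo)\mid v\in\Z_4^n,\ v\modtwo\in H\}$ together with \eqref{formH}, and both deduce the claim about $|S^Q|$ from the subgroup inclusion $S^Q\subset U^Q$. The only cosmetic difference is that you count via the fibres of the reduction homomorphism $\Z_4^n\to\Z_2^n$ (giving $2^n\cdot|H|$), whereas the paper simply observes that in the odd case the parity constraint cuts $\Z_4^n$ in half; these are the same computation.
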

\noindent
We can interpret the result for $S^Q$ as follows. 
In even dimension each of the $4^n$ cubes in $T^n$ is possibly filled, 
while for odd dimension only cubes with one colour can be filled.

\section{Cubes of edge length 2}

Consider a vertex $p$ of the cube tesselation 
which is contained in a surface~$\Sigma$.
It must be incident to four surface patches related by Schwarz reflection.
If $\Sigma$ is embedded then no other patch can be incident to~$p$. 
Let us refer to cubes of edge length~$2$ as \emph{large cubes}. 
Then we can say that for an embedded surface we expect 
the large cube with midpoint~$p$ contains only four filled cubes. 

Making the additional hypothesis that each of the $2^n$ large cubes
tesselating the quotient torus~$T^n$ contains four filled cubes, 
we arrive at the conjecture that embeddedness of~$\Sigma$ 
is equivalent to the number of filled cubes in~$T^n$ being exactly 
$|S^Q| = 4 \cdot 2^n = 2^{n+2}$. 
This conjecture will be proven only in the next section. 
Clearly, it is valid in dimension $n=3$.
Indeed, only half of the 64~cubes of~$T^3$ are filled, 
which gives $|S^Q|= |U^Q|= 32 =2^{n+2}$ filled cubes,
consistent with the fact that all our~$\Sigma\subset\R^3$ 
are embedded (Prop.~\ref{dimension3}). 
However, for dimensions $n\ge 3$, 
the number of cubes in~$T^n$ is $2^{2n}$,
and so the conjecture implies that for embedded surfaces~$\Sigma$ 
the density of filled cubes must decrease as $n$ grows.

The present section contains preparational lemmas.
We tesselate $T^n$ with large cubes, each containing
$2^n$ cubes of the original tesselation.
Our goal is to show that all large cubes contain 
the same number of filled cubes; the crucial step will be
to prove that this number does not vanish.
For our proof to work we need to consider a specific large cube tesselation 
which is adjusted to the position of the surface.

Let us set up some notation.
We represent $T^n$ with the cube 
$ C_4 := \bigl[-\frac{1}{2},\frac{7}{2}\bigr]^n 
$ 
of edge length $4$.  
We want to subdivide $C_4$ into $2^n$ large cubes.
Let $C_2 :=  \bigl[-\frac{1}{2},\frac{3}{2}\bigr]^n$
and associate to it the set of filled cubes 
$$
  L := \bigl\{(v, v\modtwo) \in S^Q \, | \, v \in \{0,1\}^n \bigr\}\,. 
$$
More generally, let $(2\Z_4)^n$ denote the subset of $\Z_4^n$ 
with even coordinates, endowed with $\Z_4^n$ addition,
and associate to $a \in (2\Z_4)^n$ 
the set of filled cubes in the large cube $\tau_a(C_2)$,
$$
  L^a := \bigl\{(v, v\modtwo) \in S^Q \, | \, (v-a) \in \{0,1\}^n \bigr\}\,;
$$
in particular $L^0=L$.  Then we can write
\begin{equation}\label{SQunionLb}
  S^Q = \mathbin{\dot{\bigcup}}_{a \in (2\Z_4)^n} L^a.
\end{equation}

To prove $|L^a| = |L|$ for all $a \in (2\Z_4)^n$ we will show
that $L$ is a subgroup of $S^Q$, while the $L^a$ are cosets of $L$. 
Thus all $L^a$ have the same number of elements. 
We will depend on the following technical fact.
\begin{lemma} \label{lemmacoset}
  If $a,b \in (2\Z_4)^n$ and
  $w \in L^{a}$, $z \in L^{b}$ then
  $ w \circ  {z} \in L^{a+b}$.
\end{lemma}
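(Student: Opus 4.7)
The plan is to compute $w \circ z$ directly from the composition rule of $U^Q$ and to show that its translational part has the form $(a+b) + \eta$ for some $\eta \in \{0,1\}^n$. Since $w \circ z \in S^Q \subset U^Q$ automatically has rotational part equal to the mod-$2$ reduction of its translational part, this will place $w \circ z$ in $L^{a+b}$.

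First I would unfold the definitions and write $w = (a + \epsilon,\, \epsilon\modtwo)$ and $z = (b + \delta,\, \delta\modtwo)$ with $\epsilon, \delta \in \{0,1\}^n$; this is possible because $a, b \in (2\Z_4)^n$ have even coordinates, so that the rotational part of $w$ equals $(a+\epsilon)\modtwo = \epsilon$, and similarly for $z$. Applying the group law \eqref{compositionG} (with translational parts taken modulo $4$) yields
\[
  w \circ z = \bigl(a + \epsilon + (-1)^{\epsilon}(b + \delta),\; (\epsilon + \delta)\modtwo\bigr).
\]
The crucial observation is that $(-1)^{\epsilon} b = b$ in $\Z_4^n$: each coordinate $b_\alpha$ lies in $\{0,2\} \subset \Z_4$, and both of these elements are fixed by negation in $\Z_4$. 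Hence the translational part simplifies to $(a+b) + \bigl(\epsilon + (-1)^{\epsilon}\delta\bigr)$.

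It then remains to verify that $\eta := \epsilon + (-1)^{\epsilon}\delta$ lies in $\{0,1\}^n$, which I would do coordinate by coordinate: if $\epsilon_\alpha = 0$ then $\eta_\alpha = \delta_\alpha \in \{0,1\}$, while if $\epsilon_\alpha = 1$ then $\eta_\alpha = 1 - \delta_\alpha \in \{0,1\}$. Together with the simplification above, this gives $w \circ z \in L^{a+b}$.

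I do not expect a genuine obstacle here: the entire lemma reduces to recognizing that negation in $\Z_4$ fixes the even elements $\{0,2\}$, which is precisely the feature that makes the twisted product on $U^Q$ compatible with the large-cube decomposition indexed by $(2\Z_4)^n$. Everything else is finite-group bookkeeping, and the corresponding statements that $L$ is a subgroup and each $L^a$ is a coset follow by specializing to $a = b = 0$ and general $(a,b)$ respectively.
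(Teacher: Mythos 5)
Your proof is correct and takes essentially the same approach as the paper's: both reduce the lemma to a coordinate-wise check that the translational part of $w\circ z$ differs from $a+b$ by an element of $\{0,1\}^n$. Your reorganization — factoring out $(-1)^{\epsilon}b = b$ upfront by observing that negation in $\Z_4$ fixes the even elements $\{0,2\}$ — is a slightly cleaner way of expressing the paper's step ``$2b_\beta \bmod 4 = 0$,'' but the underlying computation is identical.
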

\begin{proof}
  Writing
  $w = (u, u\modtwo) \in S^Q$, ${z} = ({v}, {v}\modtwo) \in S^Q$  
  with  $u, {v} \in \Z_4^n$ we have
  \begin{equation} \label{eqdifference} 
    u-a, \; v-b \in \{0,1\}^n \: . 
  \end{equation} 
  According to \eqref{uvversusvu} 
  the composition $w \circ {z}$ has
  translational part $u+(-1)^{u\modtwotin}{v}$.
  We need to verify the equation in $\Z_4^n$
  \begin{equation}\label{firstcompwz}
    u+(-1)^{u\modtwotin}{v} - (a+b) \in \{0,1\}^n .
  \end{equation} 
  By \eqref{eqdifference}, the $\beta$-coordinate of
  $u-a$ is either $0$ or $1$.
  We distinguish these cases.

  \noindent \emph{Case} $(u-a)_\beta = 0$: 
  As $a \in (2\Z_4)^n$ we must have $u_\beta\modtwo = 0$ and so
  the $\beta$-coordinate of~\eqref{firstcompwz} reads
  $$ 
    u_\beta-a_\beta +{v}_\beta - {b}_\beta 
    = {v}_\beta - {b}_\beta.
  $$
  By \eqref{eqdifference} indeed this is in $\{0,1\}$.

  \noindent \emph{Case} $(u-a)_\beta = 1$:
  As $a \in (2\Z_4)^n$ we must now have $u_\beta\modtwo = 1$.  
  Hence the $\beta$-coordinate of \eqref{firstcompwz} reads
  $$
    u_\beta-a_\beta -{v}_\beta - {b}_\beta 
    = 1- ({v}_\beta-{b}_\beta) - 2 {b}_\beta .
  $$ 
  Using $2 {b}_\beta \bmod 4 = 0$ and \eqref{eqdifference} 
  we see that again this is in $\{0,1\}$.
\end{proof}

\begin{lemma} \label{numberLb}
  \ia\ $L$ is a subgroup of $S^Q$.\\
  \ii\ 
  Either $|L^a| = |L|$ or $|L^a| = 0$ for each $a \in (2\Z_4)^n$.
\end{lemma}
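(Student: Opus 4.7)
My plan is to derive both parts directly from Lemma~\ref{lemmacoset} together with the fact (Lemma~\ref{commutative}) that every element of $S^Q$ is self-inverse. The key observation, used throughout, is that $2a=0$ in $\Z_4^n$ whenever $a\in(2\Z_4)^n$, because each coordinate lies in $\{0,2\}$.

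For part~\ia, I would verify the three subgroup axioms. The identity $(0,0)$ belongs to $L$ since $0\in\{0,1\}^n$. Closure under composition is the special case $a=b=0$ of Lemma~\ref{lemmacoset}: if $w,z\in L=L^0$, then $w\circ z\in L^{0+0}=L$. Closure under inverses is free, because by Lemma~\ref{commutative} each element of $S^Q$, and hence of $L\subset S^Q$, equals its own inverse.

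For part~\ii, I would show that a nonempty $L^a$ is a coset of $L$, whence $|L^a|=|L|$. Assume $L^a\neq\emptyset$ and fix $w_0\in L^a$. Applying Lemma~\ref{lemmacoset} to $w_0$ and any $z\in L=L^0$ gives $w_0\circ z\in L^{a+0}=L^a$, so $w_0\circ L\subset L^a$. Conversely, for $w\in L^a$, Lemma~\ref{lemmacoset} yields $w_0\circ w\in L^{a+a}$; since $a+a=0$ in $\Z_4^n$ by the observation above, this means $w_0\circ w\in L^0=L$. Using self-invertibility of $w_0$ (Lemma~\ref{commutative}) I then write $w=w_0\circ(w_0\circ w)\in w_0\circ L$, so $L^a\subset w_0\circ L$. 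Hence $L^a=w_0\circ L$ and $|L^a|=|L|$.

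There is no real obstacle here, beyond the mild bookkeeping point that the decomposition \eqref{SQunionLb} and Lemma~\ref{lemmacoset} already do the heavy lifting; the only thing one must not forget is the identity $2a\equiv 0\pmod{4}$ for $a\in(2\Z_4)^n$, which is what makes $L^a$ behave like a coset of $L$ under the group operation of $S^Q$.
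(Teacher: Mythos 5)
Your proof is correct and uses the same key ingredients as the paper: Lemma~\ref{lemmacoset} for closure and coset translation, Lemma~\ref{commutative} for self-inverses, and the observation $a+a=0$ in $(2\Z_4)^n$. The only stylistic difference is that you show directly $L^a = w_0\circ L$ when $L^a\neq\emptyset$, whereas the paper first writes $L^a$ as a union of cosets $uL$ and then bounds the count via the injection $w\mapsto u\circ w$ into $L^{2a}=L$; both routes are equivalent in substance.
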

\begin{proof}
  \ia\ By Lemma~\ref{lemmacoset}, $L=L^0$ is closed, 
  and by Lemma~\ref{commutative}
  every element of $L \subset S^Q$ is self-inverse.
  
  \noindent
  \ii\
  Again by Lemma~\ref{lemmacoset} we can write
  $L^a=\bigcup_{u\in L^a}uL$.
  Any two cosets on the right are either equal or disjoint, 
  and they have the same number of elements. 
  Therefore, $L^a$ is a disjoint union of $k_a\in\N_0$ cosets 
  and $|L^a| = k_a |L|$.

  We claim that $k_a$ is either zero or one. 
  Indeed, if $L^a \neq \emptyset$ then there exists $u \in L^a$; 
  moreover, by Lemma~\ref{lemmacoset},
  composition with $u$ maps $L^a$ injectively to $L^{2a}=L$.
  So $|L^a| \leq |L|$ follows, implying our claim.
\end{proof}

We can now show that all large cubes contain the same
number $|L|$ of filled cubes.  Our proof needs the following
assumption, which can be achieved by applying an isometry of~$\R^n$:
\begin{equation}\label{pposition}
  p:=\bigl(\textstyle\frac{1}{2}, \ldots, \frac{1}{2}\bigr)
                   \in \Gamma\subset\Sigma.
\end{equation}
\begin{lemma} \label{Mequals}
  If\/ $\Sigma$ is positioned according to \eqref{pposition}
  then the following holds:\\
  \ia\ $L^a\not=\emptyset$ for all $a\in (2\Z_4)^n$,\\
  \ii\ $|L^a| = |L| $ for all $a\in (2\Z_4)^n$, and $|S^Q| = 2^n |L|$.
\end{lemma}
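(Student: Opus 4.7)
Part (ii) is a formal consequence of part (i): Lemma~\ref{numberLb}(ii) forces $|L^a| \in \{0, |L|\}$, so under the hypothesis of~(i) each $|L^a|=|L|$, and the decomposition~\eqref{SQunionLb} together with $|(2\Z_4)^n|=2^n$ yields $|S^Q| = 2^n|L|$. The substantive content is therefore part~(i).

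Set $\mathcal{A} := \{a \in (2\Z_4)^n \mid L^a \neq \emptyset\}$. Lemma~\ref{lemmacoset} shows $\mathcal{A}$ is closed under addition, and since every element of $(2\Z_4)^n \cong \Z_2^n$ is self-inverse, $\mathcal{A}$ is a subgroup of $(2\Z_4)^n$. So it suffices to verify that each generator $2 e_\beta$, $\beta=1,\ldots,n$, lies in~$\mathcal{A}$. A short computation using $p = (\tfrac12,\ldots,\tfrac12)$ and $s = (v, v\modtwo) = \tau_v \circ \rho$ with $\rho = v\modtwo \in \{0,1\}^n$ gives
\[
  s(p) - p \;=\; v - (v\modtwo) \;\in\; (2\Z_4)^n,
\]
and substituting into the defining condition of~$L^a$ shows that $s \in L^a$ is equivalent to $s(p) \equiv p + a \pmod{(4\Z)^n}$. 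Hence $\mathcal{A}$ coincides with the set of shifts of~$p$ realised by the orbit of~$p$ under $S^Q$ in~$T^n$, and the remaining task is, for each~$\beta$, to produce $s \in S$ with $s(p) \equiv p + 2 e_\beta$ in~$T^n$.

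Here the hypothesis $p \in \Gamma$ is essential. At~$p$ two edges of~$\Gamma$ meet, in some directions $\beta_1, \beta_2$; the corresponding Schwarz reflections both fix~$p$ and lie in~$L^0$. Since $\Gamma$ is closed and contains at least one edge in every direction, it must contain at least two edges in each direction~$\beta$, and therefore visits at least one vertex~$q$ with $x_\beta = -\tfrac12$. Tracing $\Gamma$ from~$p$ along a segment ending at~$q$ and composing the corresponding Schwarz reflections produces an element of~$S$ whose action on~$p$ is a definite shift in~$(2\Z_4)^n$; combining these with the pure translations obtained by composing two reflections about parallel edges of~$\Gamma$ (which lie in $L^a$ with $a_\beta = 0$) should realise the shift $2 e_\beta$ exactly.

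The main obstacle is to carry this combinatorial construction through rigorously for every admissible~$\Gamma$: one is restricted to reflections about edges actually in~$\Gamma$, so the argument hinges on the combinatorial richness of~$\Gamma$ near~$p$. I expect the cleanest route to be either an explicit walk along~$\Gamma$, tracking the translational part of the cumulative composition modulo~$4$, or a duality argument ruling out a nontrivial $\Z_2$-linear functional on $(2\Z_4)^n$ that would vanish on every $a$ arising from such a composition.
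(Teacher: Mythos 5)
Your reduction of (ii) to (i) is correct and matches the paper, as does your identification of $M = \mathcal{A}$ as a subgroup of $(2\Z_4)^n$ via Lemma~\ref{lemmacoset}; your reformulation of $s \in L^a$ as $s(p) \equiv p + a \pmod{(4\Z)^n}$ is a valid and slightly cleaner way to say what the paper encodes via the membership condition $v - a \in \{0,1\}^n$. But the proposal stops exactly where the real work begins. You say the remaining task is ``to produce $s \in S$ with $s(p) \equiv p + 2e_\beta$'' and then note that ``the main obstacle is to carry this combinatorial construction through rigorously for every admissible $\Gamma$'' — that obstacle is the substance of the lemma, and you leave it open. The informal suggestion of ``tracing $\Gamma$ from $p$ to a vertex $q$ with $x_\beta = -\tfrac12$'' does not on its own yield the shift $2e_\beta$: the composed element of~$S$ will generally have a nontrivial rotational part and a translational shift with several nonzero coordinates, and there is no immediate reason its contribution to~$M$ is exactly $2e_\beta$.

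The paper closes this gap as follows. Rather than tracking the orbit of~$p$ along a walk, it composes \emph{pairs} of generators $(k_1,k_i)$, where $k_1$ is the first edge leaving~$p$. For such a pair it derives, via a coordinate-by-coordinate table, the relation~\eqref{aigleich}: $a(i)_\beta = 2$ iff $u_\beta = 3$ or $v_\beta = 3$, i.e.\ iff one of the two edges has $\beta$-coordinate pinned at $-\tfrac12$. It then proves two refinements: \eqref{glb2}, that if $k_{i-1}$ is the \emph{first} edge of $\Gamma$ in its direction then $a(i)_{\gamma(i-1)} = 2$; and \eqref{glbin}, that $a(i)$ lies in the span of $\{2e_\beta : \beta \in \{\gamma(1),\ldots,\gamma(i-1)\}\}$. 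An induction on $j$ (base case $a(2) = 2e_{\gamma(1)}$, inductive step subtracting off previously obtained generators from $a(j+1)$) then produces $2e_{\gamma(j)} \in M$ for all $j$, and since $\gamma$ hits every direction, $M = (2\Z_4)^n$. You should supply the analogue of this triangular induction — some concrete bookkeeping along $\Gamma$ that isolates each generator $2e_\beta$ — before the proof can be considered complete.
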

\begin{proof}
  First we prove \ia\ implies \ii: 
  The first statement of \ii\ follows from \ia\ and 
  Lemma~\ref{numberLb}.
  Therefore, we conclude that indeed
  $$  
    |S^Q| = \sum_{a \in (2\Z_4)^n} |L^a| 
    = \sum_{a\in (2\Z_4)^n} |L| = 2^n |L| \: . 
  $$
  
  It remains to prove \ia.  It will be useful to set
  $$M := \{a \in (2\Z_4)^n \mid L^a \neq \emptyset \} \,, 
  $$ 
  which is a subgroup of $(2\Z_4)^n$ by Lemma~\ref{lemmacoset}.
  Proving \ia\ is equivalent to showing $M = (2\Z_4)^n$.
  To see that, we will pick pairs of edges of~$\Gamma$
  and show that the composition of the corresponding two generators of $S^Q$ 
  lies in some $L^a$.  So it provides an element $a \in M$. 
  We will show that these elements generate~$(2\Z_4)^n$.

Consider two arbitrary generators $(u, u\modtwo)$ and 
$({v},{v}\modtwo)$ of $S^Q$, corresponding to two edges of $\Gamma$.
According to~\eqref{uvversusvu} their composition is
$$
  (w, w\modtwo) 
  := \bigl(u+(-1)^{u\modtwotin}{v} ,(u+v)\modtwo\bigr)
  \in S^Q,
$$
where we calculate mod~$4$.
By \eqref{SQunionLb} then $(w,w\modtwo)\in L^a$ for some
$a\in (2\Z_4)^n$, and so $a \in M$ follows.
By \eqref{formgenS}, the elements $u,v$ 
taken modulo~$4$ have coordinates in $\{0,1,3\}$.
Thus there are nine cases to consider for each 
coordinate $w_\beta$ of $w$.
In the following table each column represents such a case;  
in the last row we state the resulting value of $a_\beta$,
according to the definition of $L^a$.
\renewcommand{\arraystretch}{1.2}
\begin{equation*}\begin{array}{c|c|c|c|c|c|c|c|c|c}
u_\beta   & 0 & 0 & 0 & 1 & 1 & 1 & 3 & 3 & 3  \\
\hline 
{v}_\beta & 0 & 1 & 3 & 0 & 1 & 3 & 0 & 1 & 3 \\
\hline
w_\beta =u_\beta+(-1)^{u_\beta\modtwotin}{v}_\beta  &       
    0 & 1 & 3 & 1 & 0 & 2 & 3 & 2 & 0 \\
\hline
a_\beta = \begin{cases}
                0 & \mbox{for } w_\beta \in \{0,1\}\\
2 & \mbox{for } w_\beta \in \{2,3\}
               \end{cases}
&             0 & 0 & 2 & 0 & 0 & 2 & 2 & 2 & 0
  \end{array}
\end{equation*}
\renewcommand{\arraystretch}{1}
We conclude for the $\beta$-coordinate of $a$: 
\begin{equation}\label{aigleich}
  a_\beta=2 \quad\Leftrightarrow\quad 
  \text{ either } u_\beta=3\;\text{ or }{v}_\beta=3
\end{equation}
Recall from \eqref{formschwrefl} that $u_\beta=3$ 
if and only if the edge corresponding to~$(u, u\modtwo)$
has the $\beta$-coordinate constant with value $-1/2$;
similarly for~$v$.

Let us now establish the existence of pairs of edges as claimed.
By \eqref{pposition} we may assume 
$p:=(\frac{1}{2}, \ldots, \frac{1}{2})$ is the initial vertex of~$\Gamma$.
We pick an arbitrary orientation for $\Gamma$
and denote the resulting edge cycle by $(k_1, \ldots, k_m)$.
We suppose these edges point in the coordinate directions
$(\gamma(1), \ldots, \gamma(m))$ where $1\le \gamma(i)\le n$. 
Let $2\le i\le m$ and choose the pair of edges $k_1$ and $k_{i}$,
giving rise to some $a=a(i)\in M$ with coordinates as in the table.

Let us assume that among $k_1,\ldots,k_{m}$,
the edge $k_{i-1}$ is the first edge running
in the ${\gamma(i-1)}$-direction.
Then the $\gamma(i-1)$-coordinate attains the constant value~$-1/2$
on the subsequent edge~$k_i$, but not on~$k_1$. 
Hence application of~\eqref{aigleich} to $\beta=\gamma(i-1)$ gives
\begin{equation}\label{glb2} 
  a(i)_{\gamma(i-1)} = 2 \, .
\end{equation}

Without the assumption we can still constrain certain coordinates of~$a(i)$. 
On the one hand, along the edge $k_1$ no coordinate is fixed to~$-1/2$.
On the other hand, along the edge $k_i$ the 
$\beta$-coordinate can only be fixed to $-1/2$
if at least one of the edges $k_1,\ldots,k_{i-1}$ 
affects a change of the $\beta$-coordinate, that is, if it 
runs in the $\beta$-direction.
Let $e_\beta$ denote the unit vector in the $\beta$-direction,
then again by~\eqref{aigleich} we can conclude 
\begin{equation} \label{glbin} 
  a(i) \in \big\langle \{ 2 e_\beta \mid
 \beta \in \{\gamma(1), \ldots, \gamma(i-1) \} \}\big\rangle_{(2 \Z_4)^n} \, .
\end{equation} 

The preceding observations can be used to prove $2e_{\gamma(j)}\in M$ for 
$j=1,\ldots,m-1$ by an iterative argument.  Since $\gamma(j)$ attains
all values in $\{1,\ldots,n\}$, this will finish the proof.

To initialize, let $j=1$. We consider the edges $k_1,k_2$.
Then \eqref{glb2} and \eqref{glbin} immediately give
$2e_{\gamma(1)}=a(2)\in M$.
For the step, the iterative assumption 
is $2e_{\gamma(1)},\ldots, 2e_{\gamma(j-1)}\in M$ and 
we claim $2 e_{\gamma(j)} \in M$. 
In case one of the edges $k_1, \ldots, k_{j-1}$ runs in the 
$\gamma(j)$-direction the claim is contained in the iterative assumption.
Otherwise, \eqref{glb2} and~\eqref{glbin} give
\[
  a(j+1) \in \big\langle 2e_{\gamma(1)},\ldots,2e_{\gamma(j)}\big\rangle
  \quad\text{and}\quad a(j+1)_{\gamma(j)}=2.
\]
Therefore the vector $2e_{\gamma(j)}-a(j+1)$ is a linear combination of 
$2e_{\gamma(1)},\ldots,2e_{\gamma(j-1)}\in M$ and so lies in $M$ itself. 
Thus indeed $2e_{\gamma(j)}\in M$ holds, as desired.
\end{proof}

\section{Characterization of embeddedness}\label{se:charemb}

With the lemmas of the preceding section at hand, we can
now prove our main result.
\begin{theorem} \label{thmfilledcubes}  
  We have $|S^Q| \geq 2^{n+2}$,
  and the surface $\Sigma$ is embedded if and only if $|S^Q| = 2^{n+2}$,
  while $\Sigma$ is not embedded if and only if $|S^Q| \geq 2^{n+3}$.
\end{theorem}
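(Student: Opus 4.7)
The plan is to reduce $|S^Q|$ to a count at a single vertex by means of Lemma~\ref{Mequals}, and then to pin down that count using Cor.~\ref{co:intersectionvertex} together with the $2$-group structure of $S^Q$ implied by Prop.~\ref{pr:order}.

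First, by a translation of $\R^n$ I would arrange \eqref{pposition}, so that $p := (\tfrac12,\ldots,\tfrac12)$ lies on $\Gamma \subset \Sigma$. Lemma~\ref{Mequals} then yields $|S^Q| = 2^n |L|$. Since $L$ consists precisely of the filled cubes among the $2^n$ unit cubes meeting at the vertex~$p$, I want to identify $|L|$ with the number $N'(p)$ of surface patches of $\Sigma$ whose boundary contains~$p$. The inequality $|L| \ge N'(p)$ is immediate because distinct patches live in distinct cubes by Thm.~\ref{th:atmostone}. For the opposite inequality, I would take an arbitrary element $s_u = (u, u\modtwo) \in L$ with $u \in \{0,1\}^n$, so that $\rho := u\modtwo$ satisfies $\rho_\alpha = u_\alpha$, and compute
\[
  s_u^{-1}(p) \;=\; \rho(p - u) \;=\; \bigl((-1)^{u_\alpha}(\tfrac12 - u_\alpha)\bigr)_{\alpha=1,\ldots,n} \;=\; (\tfrac12,\ldots,\tfrac12) \;=\; p,
\]
so $p \in s_u(\Gamma)$ iff $p \in \Gamma$, which holds by positioning. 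Hence every filled cube in $L$ contributes to $N'(p)$, giving $|L| = N'(p)$.

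Next I would combine three facts. First, $N'(p) \ge 4$ and $N'(p)$ is a multiple of~$4$: this is the $90^\circ$-angle observation recalled before Cor.~\ref{co:intersectionvertex}. Second, $|S^Q|$ is a power of~$2$, since by Prop.~\ref{pr:order} the ambient group $U^Q$ has order a power of~$2$ and $S^Q$ is a subgroup. Consequently $|L| = |S^Q|/2^n$ is itself a power of~$2$, and together with divisibility by~$4$ this forces $|L| \in \{4,8,16,\ldots\}$. Third, by Cor.~\ref{co:intersectionvertex}\ii, $p$ is a self-intersection of~$\Sigma$ precisely when $N'(p) \ge 8$. The three facts together give $|S^Q| \ge 2^{n+2}$, with $|S^Q| = 2^{n+2}$ iff $p$ is not a self-intersection, and the only remaining alternative being $|L| \ge 8$, i.e.\ $|S^Q| \ge 2^{n+3}$.

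To convert this local information at $p$ into a global embeddedness statement, I would note that $|S^Q|$ depends only on $\Gamma$ and is invariant under translations of $\Sigma$ by $\Z^n$-vectors (the conjugate group has the same order, and conjugation by such a translation fixes the normal subgroup $(4\Z)^n\times\{0\}$). Repeating the argument after translating any other vertex $v$ of $\Gamma$ into position $(\tfrac12,\ldots,\tfrac12)$ therefore yields $N'(v) = |S^Q|/2^n$ for every vertex of~$\Gamma$, and by the action of $S$ on vertices of copies of~$\Gamma$ the same value of $N'$ is attained at every vertex of $\Sigma$. Thus $\Sigma$ is embedded precisely when $N' \equiv 4$, that is, when $|S^Q| = 2^{n+2}$, while non-embeddedness is equivalent to $N' \ge 8$ everywhere, i.e.\ $|S^Q| \ge 2^{n+3}$. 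I expect the main obstacle to be the identification $|L| = N'(p)$: it is where the specific positioning~\eqref{pposition} is crucial, as it converts the algebraic count $|L|$ of filled cubes into the geometric count of patches incident to $p$ that Cor.~\ref{co:intersectionvertex} is phrased in terms of.
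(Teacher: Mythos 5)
Your proposal is correct and follows essentially the same route as the paper: position $p=(\tfrac12,\ldots,\tfrac12)$ on $\Gamma$, invoke Lemma~\ref{Mequals} to get $|S^Q| = 2^n|L|$, verify (as the paper does in its converse direction) that every $s_u\in L$ fixes $p$, and combine Cor.~\ref{co:intersectionvertex} with the power-of-two constraint from Prop.~\ref{pr:order} to pin down $|L|$. The only stylistic difference is that you make explicit the identification $|L|=N'(p)$ and the constancy of $N'$ over vertices, whereas the paper folds these into the two directions of its argument by applying an isometry bringing an arbitrary self-intersecting vertex to $p$.
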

\begin{proof}
  Without loss of generality we may assume \eqref{pposition}.
  Then on the one hand $|L|\geq 4$, 
  and on the other hand $|S^Q| = 2^n |L|$ by Lemma~\ref{Mequals}\ii.
  We have verified our first claim $|S^Q|\geq 2^{n+2}$.

  Therefore, to prove the embeddedness statements it suffices
  to show that $\Sigma$ is not embedded if and only if $|S^Q|\ge 2^{n+3}$.
Suppose first that $\Sigma$ is not embedded. 
As stated in Cor.~\ref{co:intersectionvertex}\ia\
a vertex of the cube tesselation
must be a self-intersection point of $\Sigma$; 
upon an isometry we may assume that this is the vertex~$p$.
By Cor.~\ref{co:intersectionvertex}\ii\ at least $8$~filled cubes 
must contain~$p$.  
Thus $|L| \geq 8$, and so $|S^Q| \geq 2^{n+3}$ by Lemma~\ref{Mequals}\ii. 

For the converse we assume $|S^Q| \geq 2^{n+3}$ 
and show that $\Sigma$ cannot be embedded.
By Lemma~\ref{Mequals}\ii\ we have $|L| \geq 8$. 
Each element of $L$ corresponds to a distinct symmetry $s=\tau_v \circ \rho$ 
with $v_\beta \in \{0,1\}$ and $\rho(x) = (-1)^{v\modtwotin} x$,
or equivalently to a surface patch $s(\Sigma_0)$.
We claim that each such $s$ fixes~$p$.  Indeed,
\begin{equation} 
  s(p)_\beta = ((\tau_v \circ \rho)(p))_\beta 
  = v_\beta + ((-1)^{v_\beta\modtwotin} p)_\beta  
  = \frac12,
\end{equation}
where the last equality reads either
$0 + \frac{1}{2} = \frac{1}{2}$ in case $v_\beta = 0$,
or $1-\frac{1}{2} = \frac{1}{2}$ in case $v_\beta = 1$.
But this implies $p$ is contained in eight distinct surface patches of $\Sigma$,
and so $\Sigma$ cannot be embedded by Cor.~\ref{co:intersectionvertex}.
\end{proof} 

The theorem gives a concise characterization of embeddedness. 
Nevertheless, in order to decide whether a given Jordan path
generates an embedded surface, a further characterization will
be useful, namely one in terms of the number of lattice elements.  
\begin{lemma} \label{thmlattice} 
  For $n$ even we have 
  $|S^Q| =  2^n |\Lambda^Q(\Sigma) \cap (2\Z_4)^n|$,
  while for $n$ odd we have
  $|S^Q|= 2^{n-1} |\Lambda^Q(\Sigma) \cap (2\Z_4)^n|$.
\end{lemma}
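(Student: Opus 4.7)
My plan is to exhibit $S^Q$ as an extension of its rotational part by its translational (lattice) part. Specifically, I introduce the projection
\[
\pi\colon S^Q\longrightarrow H,\qquad (v,v\modtwo)\longmapsto v\modtwo.
\]
The composition rule~\eqref{compositionG} shows that the rotational part of a product in $U^Q$ is the $\Z_2^n$-sum of the rotational parts, so $\pi$ is a well-defined group homomorphism. By the first isomorphism theorem, $|S^Q|=|\operatorname{im}\pi|\cdot|\ker\pi|$, and the lemma reduces to computing each factor.

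For the image, the Jordan path hypothesis guarantees that $\Gamma$ contains an edge in each coordinate direction $\beta=1,\ldots,n$. Each such edge contributes a generator of $S^Q$ whose rotational part is $\rho^\beta$, so $\operatorname{im}\pi$ contains every generator of $H=\langle\rho^1,\ldots,\rho^n\rangle$ and hence equals $H$. By~\eqref{formH}, $|\operatorname{im}\pi|=|H|$ equals $2^n$ when $n$ is even and $2^{n-1}$ when $n$ is odd, producing the two prefactors in the lemma.

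For the kernel, an element $(v,v\modtwo)\in S^Q$ lies in $\ker\pi$ exactly when $v\modtwo=0$, i.e.\ $v\in(2\Z_4)^n$; equivalently, it is represented by a pure translation $\tau_w\in S$ with $w\bmod 4=v$. The text preceding Lemma~\ref{latticeel} asserts that $v\in\Lambda(\Sigma)$ implies $\tau_v\in S$, and the converse is immediate since any $\tau_w\in S\subset\Isom(\R^n)$ is a symmetry of $\Sigma$. This pair of implications identifies pure translations in $S$ with elements of $\Lambda(\Sigma)$; passing to the quotient modulo $(4\Z)^n$, it identifies $\ker\pi$ with $\Lambda^Q(\Sigma)$. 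Since Lemma~\ref{lepU} forces every element of $S$ of form $(w,0)$ to satisfy $w\modtwo=0$, we automatically have $\Lambda(\Sigma)\subset(2\Z)^n$ and hence $\Lambda^Q(\Sigma)\subset(2\Z_4)^n$, so the intersection in the statement is vacuous: $\ker\pi=\Lambda^Q(\Sigma)\cap(2\Z_4)^n$.

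Multiplying $|\operatorname{im}\pi|$ and $|\ker\pi|$ yields the lemma in both parities. The only mildly delicate step is the identification of $\ker\pi$ with pure translations arising from $\Lambda(\Sigma)$: one needs the nontrivial direction $v\in\Lambda(\Sigma)\Rightarrow\tau_v\in S$, which follows from Prop.~\ref{propbijpatches} applied to the surface patch $\tau_v(\Sigma_0)\subset C^v$, since that patch must be the image of $\Sigma_0$ under a unique symmetry in~$S$ whose translational part is~$v$.
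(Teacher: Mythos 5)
Your approach — writing $|S^Q|=|\operatorname{im}\pi|\cdot|\ker\pi|$ for the rotational-part projection $\pi$ — is essentially the same decomposition the paper uses; the paper's sets $\Omega(\rho)$ are exactly the fibers of your $\pi$, and showing they all have the same size is precisely the content of the first isomorphism theorem.

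There is, however, a genuine error in your identification of $\ker\pi$. You assert that $v\in\Lambda(\Sigma)$ implies $\tau_v\in S$, deduce from Lemma~\ref{lepU} that $\Lambda(\Sigma)\subset(2\Z)^n$, and conclude the intersection in the statement is \emph{vacuous}. This is false in general: Remark~3 following Thm.~\ref{criterion} explicitly records that $\Lambda^Q(\Sigma)$ can contain elements outside $(2\Z_4)^n$, precisely when $\Sigma_0$ has nontrivial symmetries in $H$ (e.g.\ when a Schwarz reflection of $\Sigma_0$ differs from $\Sigma_0$ by a translation). Your justification via Prop.~\ref{propbijpatches} is also unsound on this point: the proposition only gives a unique $s\in S$ with $s(\Sigma_0)=\tau_v(\Sigma_0)$, and $s$ must then have the form $(v,v\modtwo)$ by Lemma~\ref{lepU}; it does \emph{not} follow that $s=\tau_v=(v,0)$ unless $v\modtwo=0$. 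If $\Sigma_0$ has a symmetry $\rho=v\modtwo\neq0$, you get $v\in\Lambda(\Sigma)$ with $\tau_v\notin S$ and $v\notin(2\Z)^n$. The circularity is that you use the claim you want to prove (vacuousness of the intersection) to argue $v\modtwo=0$.

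The fix does not require vacuousness; argue both inclusions directly. For $\ker\pi\subseteq\Lambda^Q(\Sigma)\cap(2\Z_4)^n$: an element $(v,0)\in S^Q$ has $v\modtwo=0$, so $v\in(2\Z_4)^n$, and applying Lemma~\ref{latticeel} to $(v,0)$ and $(0,0)$ gives $v\in\Lambda^Q(\Sigma)$. For $\supseteq$: if $v\in\Lambda^Q(\Sigma)\cap(2\Z_4)^n$, then $\tau_v(\Sigma_0)$ is a surface patch in $C^v$, so Thm.~\ref{th:atmostone} gives $(v,v\modtwo)\in S^Q$; since $v\in(2\Z_4)^n$ this is $(v,0)\in\ker\pi$. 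This recovers $\ker\pi=\Lambda^Q(\Sigma)\cap(2\Z_4)^n$ without ever claiming $\Lambda^Q(\Sigma)\subset(2\Z_4)^n$.
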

\begin{proof}
We order the elements of $S^Q$ by their rotational parts.
Let $\Omega(\rho)\subset S^Q$ be the subset of all elements 
with rotational part $\rho$. Then 
$$ 
  S^Q = {\dot{\bigcup_{\rho \in H}}}\; \Omega(\rho)\qquad 
  \mbox{and} \qquad |S^Q| = \sum_{\rho \in H} |\Omega(\rho)|.
$$ 
Recall that $\rho^\beta$ denotes a half-turn rotation with axis pointing
in the $\beta$-direction, and that $\Gamma$ contains edges in all directions.  
Thus $\Omega(\rho^\beta)$ is non-empty for each $1 \leq \beta \leq n$.
Upon composition of reflections we see that $\Omega(\rho)$ is non-empty
for each $\rho \in \langle \rho^1, \ldots, \rho^n \rangle = H$.

We claim each $\Omega(\rho)$ has the same number of elements. 
To see this, consider $(u,\rho) \in \Omega(\rho)\subset S^Q$.
We claim that 
\begin{equation*} 
  (v,\rho) \in \Omega(\rho)\quad\Longleftrightarrow\quad
  {u}-v \in \Lambda^Q(\Sigma)\cap (2\Z_4)^n \, . 
\end{equation*} 
Indeed, if $(v,\rho)\in\Omega(\rho)$ then
equality of rotational parts implies $u-v\in (2\Z_4)^n$ (see Lemma~\ref{lepU}), 
and Lemma~\ref{latticeel} implies $u-v\in\Lambda^Q(\Sigma)$.
Moreover, the converse is immediate.


Consequently, $|\Omega(\rho)| =  |\Lambda^Q(\Sigma) \cap (2\Z_4)^n|$ 
for every $\rho\in H$ and 
$$ |S^Q| = \sum_{\rho \in H} |\Omega(\rho)| 
   = |H| \cdot |\Lambda^Q(\Sigma) \cap (2\Z_4)^n|\, .
$$
Invoking \eqref{formH} gives $|H| = 2^n$ for $n$ even,
and $|H| = 2^{n-1}$ for $n$ odd.  This concludes the proof.
\end{proof}

We now combine the lemma with Thm.~\ref{thmfilledcubes} 
to obtain a lower bound for $|\Lambda^Q(\Sigma) \cap (2\Z_4)^n|$; 
the surface $\Sigma$ is embedded if and only if this lower bound is attained. 
For convenience we also include the assertion of Thm.~\ref{thmfilledcubes} 
in the following statement.  Recall that the complete surface~$\Sigma$ 
is periodic with lattice~$\Lambda(\Sigma)$
and generated from an initial surface~$\Sigma_0$ by Schwarz reflection. 
Moreover, $\Lambda^Q(\Sigma)$ and $S^Q$ denote quotient lattice 
and quotient group under the action of~$(4\Z)^n$.
\begin{theorem} \label{criterion}
  \ia\ For even dimension $n$ the following is equivalent:\\
  $\Sigma$ is embedded $\Longleftrightarrow |S^Q| 
  = 2^{n+2} \Longleftrightarrow |\Lambda^Q(\Sigma) \cap (2\Z_4)^n| = 4$, 
  whereas\\
  $\Sigma$ is not embedded 
  $\Longleftrightarrow |S^Q| \geq 2^{n+3} 
  \Longleftrightarrow |\Lambda^Q(\Sigma) \cap (2\Z_4)^n| \geq 8$.\\[1mm]
  \ii\ For odd dimension $n$ the following is equivalent:\\
  $\Sigma$ is embedded $\Longleftrightarrow |S^Q| = 2^{n+2} \Longleftrightarrow |\Lambda^Q(\Sigma) \cap (2\Z_4)^n| = 8$, whereas\\
  $\Sigma$ is not embedded $\Longleftrightarrow |S^Q| \geq 2^{n+3} \Longleftrightarrow |\Lambda^Q(\Sigma) \cap (2\Z_4)^n| \geq 16$.
\end{theorem}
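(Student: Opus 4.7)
The plan is to derive Theorem~\ref{criterion} as a short combination of the two results that immediately precede it, namely Theorem~\ref{thmfilledcubes} and Lemma~\ref{thmlattice}. Theorem~\ref{thmfilledcubes} already furnishes exactly the embeddedness equivalences stated in terms of $|S^Q|$: the surface is embedded iff $|S^Q|=2^{n+2}$, and not embedded iff $|S^Q|\ge 2^{n+3}$. So the only work left is to translate these $|S^Q|$ thresholds into thresholds on $|\Lambda^Q(\Sigma)\cap (2\Z_4)^n|$, and this is precisely what the identity in Lemma~\ref{thmlattice} accomplishes once it is solved for $|\Lambda^Q(\Sigma)\cap (2\Z_4)^n|$.

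For even $n$, I would invoke Lemma~\ref{thmlattice} in the form $|\Lambda^Q(\Sigma)\cap(2\Z_4)^n|=|S^Q|/2^n$. Substituting the values $|S^Q|=2^{n+2}$ and $|S^Q|\ge 2^{n+3}$ yields the thresholds $4$ and $\ge 8$ for the lattice intersection, respectively, and each step is reversible because division by the nonzero constant $2^n$ preserves the equivalences. For odd $n$ the same argument applies with the prefactor $2^{n-1}$ in place of $2^n$, so the embeddedness threshold becomes $2^{n+2}/2^{n-1}=8$ and the non-embeddedness threshold becomes $16$. This gives both halves of the theorem at once, and Theorem~\ref{thmfilledcubes} already ensures that exactly one of the two alternatives holds.

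There is no genuine obstacle to overcome: the entire content of the argument is a one-step substitution of Lemma~\ref{thmlattice} into Theorem~\ref{thmfilledcubes}. The one point that requires a moment of care is the parity split, which originates purely from the value of $|H|$ computed in \eqref{formH} ($2^n$ for $n$ even versus $2^{n-1}$ for $n$ odd); this single factor of two is precisely what makes the critical lattice count double from $4$ to $8$ (and from $8$ to $16$) when passing between the even and odd cases. Beyond verifying that bookkeeping, nothing further is needed.
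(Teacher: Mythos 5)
Your proposal is correct and takes exactly the same route as the paper: the theorem is obtained by substituting the identity of Lemma~\ref{thmlattice} (with prefactor $2^n$ for $n$ even, $2^{n-1}$ for $n$ odd) into the $|S^Q|$ thresholds of Theorem~\ref{thmfilledcubes}, noting that dividing by a fixed positive power of two preserves both the equality and the inequality. The paper in fact does not present a separate proof at all but states in the surrounding text that Theorem~\ref{criterion} is precisely this combination, so your argument matches the intended one step for step.
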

\remarks
1. In particular, the lattice quotient contains non-trivial elements, that is, 
$\Lambda(\Sigma)\supsetneq (4\Z)^n$.\\[1mm]
2. Note that $S^Q$ has at least $m\ge 2n$ generators  
and so possibly $|S^Q|$ is as large as $2^m\ge 2^{2n}$, 
which grows much faster than $2^{n+2}$.
Therefore the likelihood of embedded surfaces should decrease
as $m$ or $n$ grow.\\[1mm]
3. There exist cases for which 
$\Lambda^Q(\Sigma)$ contains elements which are not in $(2\Z_4)^n$.
Therefore it is necessary to intersect $\Lambda^Q(\Sigma)$ with $(2\Z_4)^n$
for the above statements to hold.
In fact, this will be the case if and only if $\Sigma_0$ has nontrivial symmetries in $H$.
For instance, this holds if a single Schwarz reflection of $\Sigma_0$
is related to $\Sigma_0$ by a translation.

\section{Checking embeddedness of $\Sigma$ for specific curves $\Gamma$} 
\label{seccriterion}

\sloppypar
Thm.~\ref{criterion} completely characterizes embeddedness
in terms of the numbers $|S^Q|$ or $|\Lambda^Q(\Sigma)\cap (2\Z_4)^n|$.
It leaves open, however, how we can compute these numbers for
a given Jordan path~$\Gamma$.
The results of the present section will allow us to compute 
these numbers efficiently, 
so that the embeddedness problem can be decided.

\subsection{Test of embeddedness for even dimension}

Let us first note the equivalence
$$v \in \Lambda^Q(\Sigma) \cap (2\Z_4)^n \quad\Leftrightarrow\quad 
  v \in  (2\Z_4)^n \mbox{ and } C^v \mbox{ is filled} 
  \quad\Leftrightarrow\quad (v,0) \in S^Q\: ,
$$
where now $C^v$ represents a cube in the quotient.
This equivalence reduces our task to computing the number of elements 
of form $(v,0) \in S^Q$.

Consider an arbitrary element $(v,0)\in S^Q$ 
and represent it as the composition of $\ell\in\N$ generators of $S^Q$, 
\begin{equation}\label{vnull}
  (v,0)=s_1\circ\ldots\circ s_\ell, 
\end{equation}
where $s_i= (u_i,\rho^{\alpha(i)})$ for some $u_i\in\Z_4^n$
with $\rho^{\alpha(i)}$ as in \eqref{rhodef}, $1\le \alpha(i)\le n$.
Let $m_\beta\in 2\N$ be the number of indices~$i$ 
such that $\rho^{\alpha(i)}=\rho^{\beta}$. 

Restricted to the rotational part, 
composition in $S^Q$ is simply $\Z_2^n$-addition,
and so the rotational component $R(v,0)$ of \eqref{vnull} reads
\begin{equation}\label{mirhoi}
  0 = m_1 \rho^1 + \ldots + m_n\rho^n \in \Z_2^n.
\end{equation}
Due to the special form of $\rho^\beta$, see \eqref{rhodef},
addition of the $\beta$- and $\delta$-component of \eqref{mirhoi} 
gives $m_\beta=m_\delta\bmod 2$. 
So either all the $m_\beta$ are even or they are all odd.
Moreover, for $n$ even, any component of \eqref{mirhoi} proves
that the $m_\beta$ must in fact be even.
Let us concentrate first on the case that all $m_\beta$ are even,
and postpone the other case, arising only for odd dimension, till 
Sect.~\ref{ss:gendim}.

The representation $s_1\circ\ldots\circ s_\ell$ of $(v,0)$
contains an even number of generators in each coordinate direction.  
Let us permute the order of the $s_i$ so that they occur
in $\ell/2$ consecutive pairs such that each pair 
consists of generators pointing in the same direction
(same rotational part).
Since $S^Q$ is abelian (Lemma~\ref{commutative})
such a permutation leaves \eqref{vnull} unchanged.
Moreover, since the rotational part of each pair vanishes, 
the translational parts $v_1,\ldots,v_{\ell/2}$ of the pairs
have a composition $v=v_1+\ldots+v_{\ell/2}$ mod $\Z_4^n$.
This representation proves ``$\subseteq$'' of the following equality,
while the inclusion ``$\supseteq$'' is obvious.
\begin{proposition}\label{pr:evendimlattice} 
  If $n$ is even then 
  \[
    \Lambda^Q(\Sigma)\cap(2\Z_4)^n
    = \bigl\langle\{ T(s_i\circ s_{j}) \mid s_i,s_j\text{ generator 
              of $S^Q$ with } R(s_i)=R(s_j)\}\bigr\rangle \subset \Z_4^n.
  \]
\end{proposition}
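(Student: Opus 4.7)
The plan is to verify both inclusions in the claimed equality, using the equivalence at the opening of Section~\ref{seccriterion} that identifies $\Lambda^Q(\Sigma)\cap(2\Z_4)^n$ with the translational parts of elements of form $(v,0)\in S^Q$. The direction $\supseteq$ should be straightforward: for any two generators $s_i,s_j$ of $S^Q$ with $R(s_i)=R(s_j)$, the rotational part of $s_i\circ s_j$ is $R(s_i)+R(s_j)=0$ in $\Z_2^n$, so $s_i\circ s_j=(T(s_i\circ s_j),0)\in S^Q$, and the cited equivalence places $T(s_i\circ s_j)$ in $\Lambda^Q(\Sigma)\cap(2\Z_4)^n$. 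Since the target is a subgroup of $\Z_4^n$, it contains the subgroup generated by all such elements.

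For the direction $\subseteq$, I would take an arbitrary $v\in \Lambda^Q(\Sigma)\cap(2\Z_4)^n$, so that $(v,0)\in S^Q$, and write $(v,0)=s_1\circ\cdots\circ s_\ell$ as a composition of generators $s_i=(u_i,\rho^{\alpha(i)})$. The text preceding the proposition already establishes via \eqref{mirhoi} that, for $n$ even, every multiplicity $m_\beta=\#\{i:\alpha(i)=\beta\}$ must be even. The key manoeuvre is then to apply Lemma~\ref{commutative} (commutativity of $S^Q$) to reorder the $s_i$ into $\ell/2$ consecutive pairs, each pair consisting of two generators with identical rotational parts. This is legitimate because any permutation of the $s_i$ leaves $(v,0)$ unchanged.

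Once the generators are paired, each paired block $s_{i_k}\circ s_{j_k}$ has vanishing rotational part by construction, hence is of the form $(w_k,0)$ where $w_k=T(s_{i_k}\circ s_{j_k})$ is a generator of the right-hand side. Composing these blocks, the group law \eqref{compositionG} simplifies to pure $\Z_4^n$-addition of translational parts (since each rotational part is zero, the sign factor $(-1)^\rho$ becomes trivial). Consequently $v=w_1+\cdots+w_{\ell/2}$ in $\Z_4^n$, which exhibits $v$ as an element of $\bigl\langle\{T(s_i\circ s_j)\mid R(s_i)=R(s_j)\}\bigr\rangle$.

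The main obstacle is conceptual rather than computational: one must be careful that the formula $T(s\circ t)=T(s)+T(t)$ only holds when the left-hand factor has vanishing rotational part, so the pairing has to happen \emph{before} addition. The fact that the rearrangement is possible at all rests on the abelian quotient structure of $S^Q$, and the evenness of all $m_\beta$ (which allows a clean pairing) is exactly where the hypothesis ``$n$ even'' enters. In the odd-dimensional case both the parity argument and the bookkeeping for the pairing break down, motivating the separate treatment in Sect.~\ref{ss:gendim}.
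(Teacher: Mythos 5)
Your proposal is correct and follows essentially the same argument as the paper: the equivalence $v\in\Lambda^Q(\Sigma)\cap(2\Z_4)^n \Leftrightarrow (v,0)\in S^Q$, the parity analysis of \eqref{mirhoi} forcing all $m_\beta$ even when $n$ is even, the use of Lemma~\ref{commutative} to rearrange the generators into same-direction pairs, and the observation that translational parts then add in $\Z_4^n$. Your additional remark about $T(s\circ t)=T(s)+T(t)$ requiring vanishing rotational part on the left factor is a helpful clarification of why the pairing must precede the addition, but it does not constitute a different route.
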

\noindent
Consequently, for a given coordinate direction $\beta$
each pair of distinct edges in direction $\beta$ gives rise to a generator.
%
The generator can then be computed as follows (for arbitrary~$n$):
\begin{proposition} \label{lattice2} 
  Let $e_i$ and $e_j$ be two edges of $\Gamma$ in the $\beta$-direction, 
  and $s_i$, $s_j$ be the corresponding generators of~$S^Q$.
  Set 
\[
  \#(i,j;\delta) :=
  \text{ number of edges of $\Gamma$ in the $\delta$-direction 
    between $e_i$ and $e_{j}$.}
\]
  Then the vector $v:=T(s_i \circ s_j) \in (2\Z_4)^n$ has the components
$$ 
  v_\beta= 0 \quad\text{and,}\quad\text{ for $\delta\neq \beta$, }\quad
  v_\delta = 
  \begin{cases}
    0, & \text{if $\#(i,j;\delta)$ is even,} \\ 
    2, & \text{if $\#(i,j;\delta)$ is odd.} 
  \end{cases} 
$$ 
\end{proposition}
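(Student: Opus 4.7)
The plan is to unpack $T(s_i \circ s_j)$ directly from the composition law \eqref{compositionG} in $S^Q$, and then interpret each coordinate geometrically in terms of the $\delta$-coordinates of midpoints of edges of $\Gamma$. Concretely, I would write $s_i = (u, \rho^\beta)$ and $s_j = (v', \rho^\beta)$, where \eqref{formgenS} forces $u_\beta = v'_\beta = 0$ and $u_\delta, v'_\delta \in \{1, 3\}$ in $\Z_4$ for every $\delta \neq \beta$. Since the rotational parts $\rho^\beta + \rho^\beta$ cancel in $\Z_2^n$, \eqref{compositionG} gives
\[
  T(s_i \circ s_j) \;=\; u + (-1)^{\rho^\beta}\, v' \pmod 4 .
\]
By the definition \eqref{rhodef} of $\rho^\beta$, this reads $u_\beta + v'_\beta = 0$ in the $\beta$-component, and $u_\delta - v'_\delta$ in each $\delta$-component with $\delta\neq \beta$. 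A one-line case check on $u_\delta,v'_\delta \in \{1,3\}$ shows that this difference is $0$ if $u_\delta = v'_\delta$ and $\pm 2 \equiv 2 \pmod 4$ otherwise. This already yields $v\in (2\Z_4)^n$ and the desired form, and reduces the proposition to the equivalence
\[
  u_\delta = v'_\delta \quad\Longleftrightarrow\quad \#(i,j;\delta)\text{ is even}.
\]

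For the geometric step, I would use \eqref{formschwrefl} to identify $u = 2q$ with twice the midpoint $q$ of the edge $e_i$, so that $u_\delta = 1$ encodes $q_\delta = \tfrac12$ and $u_\delta = 3$ encodes $q_\delta = -\tfrac12$. Since $e_i$ points in the $\beta$-direction and $\delta\neq \beta$, this $\delta$-coordinate is constant along all of $e_i$, and similarly for $e_j$. Walking along either of the two arcs of $\Gamma$ that connect a vertex of $e_i$ to a vertex of $e_j$, the running $\delta$-coordinate of the current vertex flips between $\pm\tfrac12$ exactly when an edge in the $\delta$-direction is traversed, and is unchanged by edges in any other direction. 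Hence $u_\delta = v'_\delta$ precisely when the traversed arc contains an even number of $\delta$-edges. The parity of $\#(i,j;\delta)$ is independent of the chosen arc, because the closedness requirement in the definition of a Jordan path forces $\Gamma$ to contain an even total number of $\delta$-edges, so the two complementary counts have the same parity.

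The bulk of the work is routine; the only obstacle is the bookkeeping between the modular reductions (values in $\{0,1,2,3\}$) and the geometric picture (coordinates in $\{\pm\tfrac12\}$). Once the dictionary $u_\delta\in\{1,3\}\leftrightarrow q_\delta\in\{\pm\tfrac12\}$ is fixed, the statement becomes a transparent parity count along the Jordan curve, and the two halves of the argument — the algebraic computation of $T(s_i\circ s_j)$ and the geometric tracking of the $\delta$-coordinate — combine to give the proposition.
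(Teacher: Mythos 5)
Your proposal is correct and follows essentially the same route as the paper's proof: both compute $T(s_i\circ s_j)$ via the composition law, reduce the claim to whether the $\delta$-coordinates of the two edge midpoints agree, and identify that agreement with the parity of the number of $\delta$-edges traversed between $e_i$ and $e_j$. Your version is slightly more explicit in justifying the parity step (tracking the $\delta$-coordinate flip along an arc and noting the two arcs give the same parity by closedness), which the paper merely asserts, but this is a difference of detail rather than of method.
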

\noindent
Recall that we consider $\Gamma$ as cyclic, so that there are two 
choices of edges between $e_i$ and~$e_j$.  
But for each coordinate direction, $\Gamma$~contains an even number 
of edges, and so $\#(i,j;\delta) \modtwo$ becomes well-defined.
\begin{proof}
  We may assume $\beta=1$. Then the edges $e_i$ and $e_j$ are of form
  $$ 
    \textstyle
    e_i = \bigl\{(t, p_2, \ldots, p_n) \mid
     t\in \bigl[-\frac{1}{2},\frac{1}{2}\bigr] \bigr\},\quad
    e_j =  \bigl \{(t, q_2, \ldots, q_n) \mid
     t\in \bigl[-\frac{1}{2},\frac{1}{2}\bigr] \bigr\},
$$ 
where $p_2, \ldots, p_n,\, q_2, \ldots, q_n \in \bigl\{\pm \frac{1}{2} \bigr\}$.
By \eqref{formschwrefl}, the corresponding generators of $S^Q$ are 
\begin{align*}
 s_i &= \bigl( (0, 2 p_2 , \ldots, 2 p_n)\modfour, \,
    (0,1,\ldots,1) \bigr),\\
 s_j &= \bigl( (0,2 q_2,\ldots,2q_n)\modfour,\,(0,1,\ldots,1) \bigr) \: ,
\end{align*}
and we may write 
\begin{equation*}  
  s_i \circ s_j
  = \bigl( (0,2p_2-2q_2, \ldots, 2p_n-2q_n) \modfour, 0 \bigr)\, . 
\end{equation*}
So indeed $v_1=0$. 
Moreover, $p_\delta \neq q_\delta$ if and only if 
between $e_i$ and $e_j$ the Jordan path $\Gamma$ passes 
through an odd number of edges in the $\delta$-direction.  
In that case,  $2p_\delta-2q_\delta$ is $\pm 2$, while otherwise it vanishes.
\end{proof}

\subsection{Jordan paths $\Gamma$ with eight edges in $\R^4$}
\label{ss:r4m8}

We can now settle the embeddedness problem 
for the six Jordan paths of length~8 in~$\R^4$:
\begin{theorem} \label{eightedgesembedded}
  From the Jordan paths of Prop.~\ref{eightedges} exactly 
  $$
  \Gamma_3 = 1231\,4234, \qquad 
  \Gamma_4 = 1231\,4324, \quad \mbox{ and } \quad 
  \Gamma_6 = 1232\,1434 
  $$ 
  lead to an embedded surface upon Schwarz reflection.
\end{theorem}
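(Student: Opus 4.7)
The plan is to apply Thm.~\ref{criterion}\ia, which for the even dimension $n=4$ reduces embeddedness to a counting problem: $\Sigma$ is embedded if and only if $|\Lambda^Q(\Sigma) \cap (2\Z_4)^4| = 4$, and not embedded as soon as this cardinality is $\ge 8$. Since each of the six paths in Prop.~\ref{eightedges} has length $m=8$ and must contain an even number of edges in each of the four directions, every direction carries \emph{exactly two} edges. Thus Prop.~\ref{pr:evendimlattice} tells us that $\Lambda^Q(\Sigma)\cap (2\Z_4)^4$ is generated by precisely four vectors $v^{(1)},v^{(2)},v^{(3)},v^{(4)} \in (2\Z_4)^4$, one per coordinate direction, coming from the unique pair of parallel edges of $\Gamma$ in that direction.

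Next I would use Prop.~\ref{lattice2} to compute each $v^{(\beta)}$: fix the two edges $e_i,e_j$ of $\Gamma$ in direction $\beta$, and for every $\delta\neq\beta$ read off the parity of the number of $\delta$-edges of $\Gamma$ lying strictly between $e_i$ and $e_j$ in the cyclic order. This produces a $0/2$-vector with $\beta$-th coordinate zero. Identifying $(2\Z_4)^4$ with $\F_2^4$ via $2\mapsto 1$, the four generators become vectors in $\F_2^4$, and the group they generate has order $2^r$, where $r$ is their rank over $\F_2$. Thm.~\ref{criterion}\ia\ then says embeddedness is equivalent to $r=2$.

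For each $\Gamma_k$ I would carry out this parity count, write down the four vectors, and compute $r$ by elementary $\F_2$-linear algebra. The expected outcome is:
\begin{itemize}
\item $\Gamma_1,\Gamma_2,\Gamma_5$ produce four generators spanning all of $\F_2^4$ (rank $4$, so $|\Lambda^Q(\Sigma)\cap (2\Z_4)^4|=16\ge 8$), hence these surfaces are not embedded.
\item $\Gamma_3,\Gamma_4,\Gamma_6$ produce four generators that span only a $2$-dimensional subspace of $\F_2^4$, so $|\Lambda^Q(\Sigma)\cap (2\Z_4)^4|=4$ and the surfaces are embedded.
\end{itemize}
In the latter three cases the rank drop is visible by coincidences among the generators: for $\Gamma_4$ and $\Gamma_6$ the four generators reduce to only two distinct nonzero vectors, and for $\Gamma_3$ the four generators include a repetition together with a linear relation among the remaining three.

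The work is almost entirely bookkeeping, so the only real obstacle is carrying out the parity counts uniformly and recognizing the (mildly hidden) $\F_2$-linear dependence in the $\Gamma_3$ case; everything else follows mechanically from Prop.~\ref{lattice2} and Thm.~\ref{criterion}. No case requires further input beyond the two propositions cited.
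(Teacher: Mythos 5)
Your proposal is correct and follows exactly the paper's own route: invoke Thm.~\ref{criterion}\ia, assemble the four generators of $\Lambda^Q(\Sigma)\cap(2\Z_4)^4$ via Prop.~\ref{pr:evendimlattice}, compute them by the parity count of Prop.~\ref{lattice2}, and check whether the $\F_2$-rank is $2$ or $4$. You stop short of writing out the actual vectors, but the ranks and the mechanism of the rank drop you describe (two distinct generators for $\Gamma_4,\Gamma_6$; one repetition plus a linear relation for $\Gamma_3$) match the paper's computations exactly.
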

\begin{figure}
\begin{center}
\includegraphics[scale=0.5]{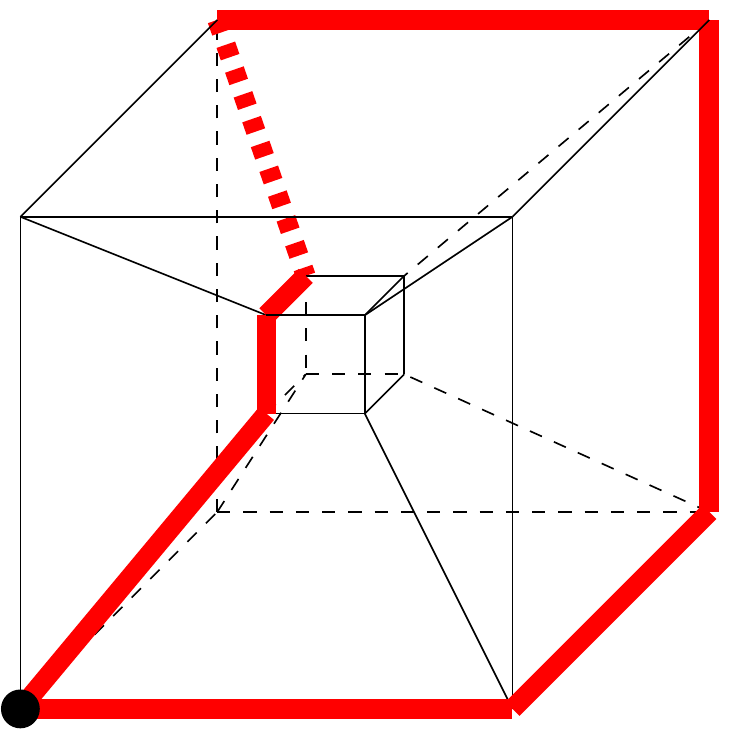} \hspace{3mm}
\includegraphics[scale=0.5]{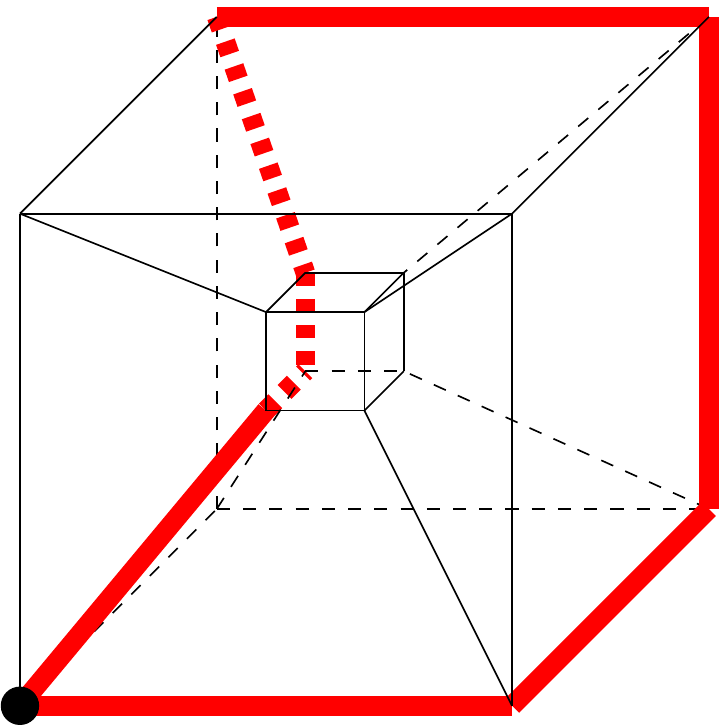} \hspace{3mm}
\includegraphics[scale=0.5]{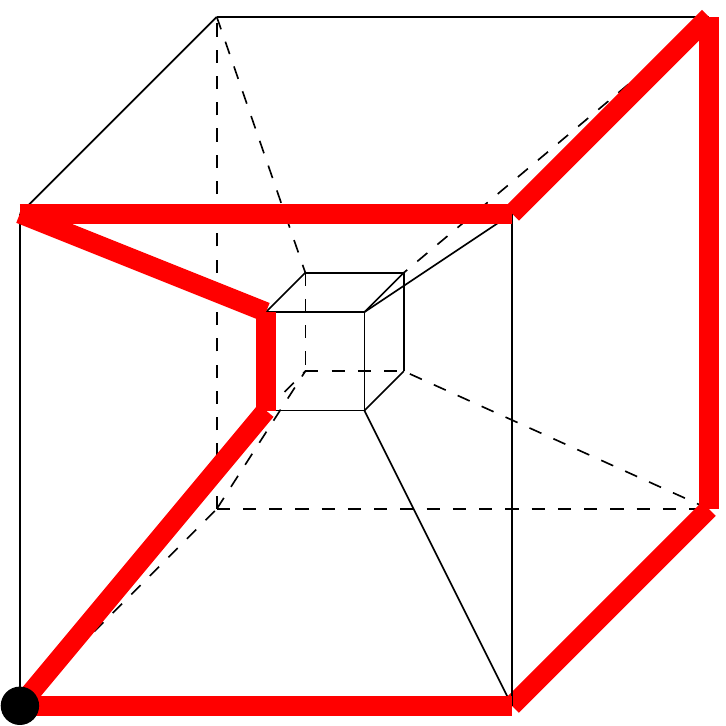} 
\end{center} 
\caption{\label{fi:R4embedded} 
  The Jordan paths with $8$ edges in $\R^4$ 
  which lead to embedded surfaces, namely
  $\Gamma_3=1231\,4234$, $\Gamma_4=1231\,4324$, and 
  $\Gamma_6=1232\,1434$.}
\end{figure}
\begin{proof} 
  For $\Gamma_1= 1231\,4243$ we claim 
  \begin{equation*}
    \Lambda^Q(\Sigma) \cap (2\Z_4)^4 
    = \left\langle \{(0,2,2,0), (2,0,2,2), (2,2,0,0), (0,2,0,0) \} \right\rangle.
  \end{equation*}
  To see this, note first that for each of the four coordinate 
  directions there is one pair of edges in~$\Gamma_1$.
  Therefore Prop.~\ref{pr:evendimlattice} gives four different nonzero generators,
  with components specified by Prop.~\ref{lattice2}.
  For $\beta=1$ this gives~$v_1=0$, while
  the other components follow from considering 
  the subsequences $23$ or~$4243$, enclosed by the pair of~$1$'s: 
  Each contains one edge in direction~$2$ and one in direction~$3$, 
  but an even number of $4$'s, so $v_2=v_3=2$ but~$v_4=0$.
  This verifies the first generator of the claim. 
  The same procedure, applied to $\beta=2,3,4$,
  gives the remaining three.

The four generators are independent, i.e.,
$\Lambda^Q(\Sigma) \cap (2\Z_4)^4 =
(2\Z_4)^4$,
and so $|\Lambda^Q(\Sigma) \cap (2\Z_4)^4| = 2^4 = 16$. 
Therefore Thm.~\ref{criterion}\ia\ implies that the surface generated from
$\Gamma_1$ has self-intersections.

We proceed similarly for the other Jordan curves.
For $\Gamma_2=1231\,4342$ we find
\begin{equation*}
  \Lambda^Q(\Sigma) \cap (2\Z_4)^4 
  = \left\langle \{(0,2,2,0), (2,0,0,0), (2,0,0,2), (0,0,2,0) \} \right\rangle,
\end{equation*}
and for $\Gamma_5=1234\,1234$ 
\begin{equation*}
  \Lambda^Q(\Sigma) \cap (2\Z_4)^4 
  = \left\langle \{(0,2,2,2), (2,0,2,2), (2,2,0,2), (2,2,2,0) \} \right\rangle.
\end{equation*}
Again, these groups agree with $(2\Z_4)^4$, and so 
Schwarz reflection generates surfaces with self-intersections.

On the other hand, we obtain for $\Gamma_3=1231\,4234$ 
\begin{align*}
  \Lambda^Q(\Sigma) \cap (2\Z_4)^4 
  &= \left\langle\{(0,2,2,0),(2,0,2,2),(2,2,0,2),(0,2,2,0) \}\right\rangle\\
  &= \left \langle \{(0,2,2,0), (2,0,2,2)\} \right \rangle,
\end{align*} 
for $\Gamma_4=1231\,4324$ 
\begin{align*}
  \Lambda^Q(\Sigma) \cap (2\Z_4)^4 
  &= \left \langle \{(0,2,2,0),(2,0,0,2),(2,0,0,2),(0,2,2,0)\}\right\rangle\\
  &= \left \langle \{(2,0,0,2), (0,2,2,0)\} \right \rangle,
\end{align*} 
and for $\Gamma_6=1232\,1434$ 
\begin{align*}
  \Lambda^Q(\Sigma) \cap (2\Z_4)^4 
  &= \left\langle\{(0,0,2,0),(0,0,2,0),(2,2,0,2),(0,0,2,0)\}\right\rangle\\
  &=  \left \langle \{(0,0,2,0),(2,2,0,2) \} \right \rangle.
\end{align*}  
  For these three cases
  we have $|\Lambda^Q(\Sigma) \cap (2\Z_4)^4| = 2^2 = 4$,
  and so by Thm.~\ref{criterion}\ia\ each of these three Jordan curves
  leads to an embedded surface.
\end{proof}

\subsection{Test of embeddedness for arbitrary dimension}\label{ss:gendim}

Let us now formulate Prop.~\ref{pr:evendimlattice}
for the case of arbitrary dimension.  
\begin{proposition} \label{lattice1}
  Let $s_1^0,\ldots,s_n^0$ be a fixed set of generators of $S^Q$,
  such that $R(s_\beta^0)= \rho^\beta$, and set 
  \begin{equation}\label{Lambda0}
    \Lambda^0:=\big\langle\left\{ T\bigl(s_{i}\circ {s}_{\beta}^0\bigr)\, | \, 
    s_i \mbox{ generator of } S^Q \text{ with } R(s_i)=\rho^\beta\! \right\} 
    \big\rangle
    \subset (2\Z_4)^n.
  \end{equation}
  Then, if $n$ is even, 
  $\Lambda^Q(\Sigma) \cap (2\Z_4)^n = \Lambda^0$,
  while for $n$ odd, 
\begin{equation}\label{latticeforoddn}
  \Lambda^Q(\Sigma) \cap (2\Z_4)^n 
  =  \big\langle \Lambda^0
    \cup  \{ T(s_1^0 \circ \ldots \circ s_n^0) \} \big\rangle .
\end{equation}
\end{proposition}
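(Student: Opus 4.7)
The plan is to establish both inclusions. The inclusion ``$\supseteq$'' is immediate once one verifies vanishing rotational parts: $T(s_i \circ s_\beta^0)$ has rotational part $\rho^\beta + \rho^\beta = 0$ and so by Lemma~\ref{lepU} its translational part lies in $(2\Z_4)^n$; for $n$ odd, the $\alpha$-component of the rotational part of $s_1^0 \circ \cdots \circ s_n^0$ equals $n-1 \equiv 0 \pmod 2$, and this element too lands in $(2\Z_4)^n$. For the nontrivial inclusion ``$\subseteq$'', I would start with an arbitrary $(v,0) \in S^Q$ with $v \in (2\Z_4)^n$, write it as a composition $s_1 \circ \cdots \circ s_\ell$ of generators, and let $m_\beta$ denote the number of generators with rotational part $\rho^\beta$. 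The parity analysis following~\eqref{mirhoi} shows that all $m_\beta$ share the same parity, and in even dimensions they must all be even.

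In the \emph{even-parity case}, I would reorder the generators, using abelianness (Lemma~\ref{commutative}), into $\ell/2$ consecutive pairs with matching rotational parts. Each pair $(s_i,s_j)$ with $R(s_i)=R(s_j)=\rho^\beta$ produces an element with zero rotational part, so successive compositions reduce to plain addition of translational parts in $\Z_4^n$, giving $v = \sum_k T(s_{i_k} \circ s_{j_k})$. It then remains to rewrite each summand in $\Lambda^0$, which rests on the key identity
\begin{equation*}
T(s_i \circ s_j) = T(s_i \circ s_\beta^0) + T(s_j \circ s_\beta^0) \quad\text{in } (2\Z_4)^n.
\end{equation*}
This follows from~\eqref{formgenS} and~\eqref{uvversusvu} once one observes that the discrepancy between the two sides collapses to doubled differences $2(u_{j,\alpha} - u_{0,\alpha})$ with both coordinates in $\{1,3\}$, hence vanishing modulo~$4$.

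In the \emph{odd-parity case} (only possible for $n$ odd), I would insert the identity $s_\beta^0 \circ s_\beta^0$ for every $\beta$ and rearrange (using abelianness) to write
\begin{equation*}
(v,0) = (s_1^0 \circ \cdots \circ s_n^0) \circ (s_1^0 \circ \cdots \circ s_n^0 \circ s_1 \circ \cdots \circ s_\ell).
\end{equation*}
The second factor now has $m_\beta+1$ generators (even) in each direction and vanishing rotational part, so the even-parity case puts its translational part~$w$ into $\Lambda^0$. Since both factors have rotational part~$0$, composition adds translational parts without twist, giving $v = T(s_1^0 \circ \cdots \circ s_n^0) + w$, as required. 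The main obstacle throughout is the key identity in the even-parity case: one must verify carefully that the twisted composition law of~\eqref{uvversusvu} reduces to plain $\Z_4^n$-addition on same-direction pairs, and that all error terms vanish precisely because we work in $(2\Z_4)^n$.
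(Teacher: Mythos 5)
Your proof is correct and takes essentially the same approach as the paper: both rely on inserting the self-inverse $s_\beta^0$'s into the product and rearranging via abelianness of $S^Q$, with the parity of the $m_\beta$'s determining whether the surplus contributes the extra generator $T(s_1^0\circ\cdots\circ s_n^0)$. Your intermediate pairing step in the even case (splitting each pair via your key identity) and your reduction of the odd case to the even case by inserting $(s_1^0\circ\cdots\circ s_n^0)^2$ are a slightly different bookkeeping of the same argument --- the paper inserts one $s^0_{\alpha(i)}$ after each $s_i$ and reads off $v=\sum_i T(s_i\circ s^0_{\alpha(i)})$ in a single step.
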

\noindent
For even dimension, the Proposition can
reduce the number of pairs to be considered
when compared with Prop.~\ref{pr:evendimlattice}:
If $k_\beta$ is the number of edges in direction~$\beta$
then the contributing generators for direction~$\beta$ are $k_\beta-1$ in \eqref{Lambda0},
while they are $\binom{k_\beta}2$ in Prop.~\ref{pr:evendimlattice}.
\begin{proof}
  We reason as we did for Prop.~\ref{pr:evendimlattice}.
  Consider first the case that all $m_\beta$ in \eqref{mirhoi} are even. 
  That is, the representation
  $s_1\circ\ldots\circ s_\ell$ contains an even number of generators
  in each coordinate direction~$\beta$.  
  As before let $\alpha(i)$ be such that $R(s_i)=\rho^{\alpha(i)}$.
  Since each $s_\beta^0$ is self-inverse and $S^Q$ is abelian
  we obtain
  \begin{equation}\label{sreprforevenm}
    s_1\circ\ldots\circ s_\ell
    =s_1\circ s^0_{\alpha(1)}\circ \ldots \circ s_\ell\circ s^0_{\alpha(\ell)};
  \end{equation}
  in particular, the translational part of \eqref{vnull} satisfies
  \[
    v=\sum_{i=1}^\ell T(s_i\circ s^0_{\alpha(i)}).  
  \] 
  The claim for even $n$ follows. 

\sloppypar
  Let us now consider the case that $n$ and all $m_\beta$ are odd.
  Then the very same argument shows
  \begin{equation}\label{sreprforoddm}
    s_1\circ\ldots\circ s_\ell
    =(s_1\circ s^0_{\alpha(1)})\circ\ldots\circ (s_\ell\circ s^0_{\alpha(\ell)})
     \,\circ\,(s_1^0\circ\ldots\circ s_n^0).
  \end{equation}
  Again let us take translational parts, noting that 
  $R(s_1^0\circ\ldots\circ s_n^0)=0$.
  Since any $(v,0)\in S^Q$ can be represented as in either 
  \eqref{sreprforevenm} or \eqref{sreprforoddm}
  we conclude ``$\subseteq$'' in \eqref{latticeforoddn},
  while ``$\supseteq$'' is obvious.
\end{proof}

For given $\Gamma$, Prop.~\ref{lattice2} allows us to compute
$|\Lambda^0|=2^k$ where $k$ is the number 
of independent generators of~$\Lambda^0$. 
For $n$ even, Prop.~\ref{lattice1} then gives
$|\Lambda^0| = |\Lambda^Q(\Sigma) \cap (2\Z_4)^n|$
and $\Sigma$ is embedded if and only if this number is $4$,  
by Thm.~\ref{criterion}\ia. 
On the other hand, for $n$ odd, 
either $|\Lambda^0| = |\Lambda^Q(\Sigma) \cap (2\Z_4)^n|$ or 
$2 |\Lambda^0| = |\Lambda^Q(\Sigma) \cap (2\Z_4)^n|$, 
depending on whether the exceptional element 
$T(s_1^0\circ \ldots \circ s^0_n)$ lies in $\Lambda^0$ or not. 
Invoking Thm.~\ref{criterion}\ii\ gives:
$|\Lambda^0| = 4$ implies $\Sigma$ is embedded, 
while $|\Lambda^0|\geq 16$ implies $\Sigma$ is not embedded. 
Thus only in the remaining case $|\Lambda^0|=8$ we need to calculate 
the exceptional element. 

\section{Classification of embedded periodic surfaces for $\R^4$}
\label{se:rfourcase}

Our goal is to determine all embedded $n$-periodic surfaces $\Sigma$ 
for $n=4$, with arbitrary number of edges $m$.  
We need to introduce two pieces of general theory first.

\subsection{Orientability of the extended surfaces}
\label{sectionorientable}

Assuming that $\Sigma_0$ is orientable, is its extension $\Sigma$ orientable?  
This will certainly be the case for an embedded surface $\Sigma$ 
with codimension~$1$, but it may fail for higher codimension.


Let us first observe that for any dimension $n$,
a Schwarz reflection changes the orientation of $\Sigma$.
So $\Sigma$ is non-orientable if and only if there exists an odd number
of generators of $S$ whose composition gives the identity.
This is the key to proving the following:
\begin{proposition} \label{proporient}
  Assume the initial surface $\Sigma_0$ is orientable.\\
  \ia\ Then for even dimension $n$ the surface $\Sigma$ 
  is orientable, and so is
  its quotient $\Sigma/ (\Lambda(\Sigma) \cap (2\Z)^n)= \Sigma^Q/\Lambda^0$.\\
  \ii\ For $n$ odd, $\Sigma$ is orientable if and only if 
  ${\Lambda^0} \neq \Lambda^Q(\Sigma) \cap (2\Z_4)^n$. 
  Furthermore, the quotient $\Sigma^Q/\Lambda^0$ 
  is orientable if and only if $\Sigma$ is orientable. 
  The surface $\Sigma/(\Lambda(\Sigma) \cap (2\Z)^n)$ is non-orientable.
\end{proposition}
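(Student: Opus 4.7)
My plan is to study the parity of word length in the Schwarz-reflection generators of $S$. Because each Schwarz reflection reverses the orientation of $\Sigma$ (as a two-dimensional surface), $\Sigma$ is orientable if and only if every expression of $\id\in S$ as a product of generators has even length; likewise, a translation $\tau_v\in S$ preserves the orientation of $\Sigma$ (assuming $\Sigma$ is orientable) if and only if $\tau_v$ admits an even-length expression. Combined with the covering-space fact that any cover of an orientable manifold is orientable, this yields an algorithmic framework for checking orientability of $\Sigma$ and its quotients.

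For an arbitrary product $s_{i_1}\circ\ldots\circ s_{i_\ell}$, let $m_\beta$ count the factors of rotational type $\rho^\beta$; then its rotational part is $\rho=\sum_\beta m_\beta\rho^\beta\in\Z_2^n$, and summing coordinates yields $\sum_\alpha\rho_\alpha\equiv(n-1)\ell\pmod 2$. For \ia, $n$ is even, so $\ell\equiv\sum_\alpha\rho_\alpha\pmod 2$, and in particular any relation $\rho=0$ forces $\ell$ to be even. This proves that $\Sigma$ is orientable, and also that every translation in $S$ (all of rotational part $0$) is orientation-preserving; hence the quotient $\Sigma^Q/\Lambda^0=\Sigma/(\Lambda(\Sigma)\cap(2\Z)^n)$ is orientable as well.

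For \ii, $n$ is odd and the parity identity degenerates. Using $\rho^\beta_\delta=1$ for all $\beta\neq\delta$, one checks that $\rho=0$ forces all $m_\beta$ to share a common parity; since $n$ is odd, each such representation is either \emph{type E} (all $m_\beta$ even, $\ell$ even) or \emph{type O} (all $m_\beta$ odd, $\ell$ odd). Proposition~\ref{lattice1} then identifies the type-E elements of the form $(v,0)\in S^Q$ with those satisfying $v\in\Lambda^0$, and the type-O ones with $v\in T(s_1^0\circ\ldots\circ s_n^0)+\Lambda^0$. The key dichotomy is therefore whether $T(s_1^0\circ\ldots\circ s_n^0)\in\Lambda^0$: if it is, these two classes coincide, so $\id_{S^Q}$ admits an odd-length representation, which combined with an even-length expression of its lift in $(4\Z)^n$ (built from the length-$4$ representations of the $\tau_{4e_\beta}$ produced in the proof of Theorem~\ref{4znlattice}) yields an odd-length relation in $S$, showing $\Sigma$ is non-orientable; if it is not, the two classes are disjoint, $\id_{S^Q}\in\Lambda^0$ admits only even-length representations, and $\Sigma$ is orientable.

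The other two assertions of \ii\ follow quickly. The biconditional for $\Sigma^Q/\Lambda^0$ results from the covering-space principle combined with the observation that, in the orientable case, every element of $\Lambda^0$ is of type E and hence orientation-preserving. The surface $\Sigma/(\Lambda(\Sigma)\cap(2\Z)^n)$ is non-orientable in both cases: if $\Sigma$ is non-orientable this is automatic, while if $\Sigma$ is orientable the additional translation, with translational part $T(s_1^0\circ\ldots\circ s_n^0)\notin\Lambda^0$, is of type O and so reverses orientation. I anticipate the most delicate step to be the passage between relations in $S^Q$ and relations in $S$, which requires tracking the parity of word-length representations of translations $\tau_v$ with $v\in(4\Z)^n$; this in turn reduces to the length-$4$ expressions for the $\tau_{4e_\beta}$ implicit in the proof of Theorem~\ref{4znlattice}.
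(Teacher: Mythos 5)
Your proposal is correct and follows essentially the same line of argument as the paper: both reduce orientability to the parity of word lengths in the Schwarz-reflection generators, exploit the observation after equation~\eqref{mirhoi} that all $m_\beta$ share a parity (forcing even parity for $n$ even), use the type-E/type-O dichotomy implicit in Proposition~\ref{lattice1} to identify the coset $T(s_1^0\circ\cdots\circ s_n^0)+\Lambda^0$ as the obstruction for $n$ odd, and handle the lift from $S^Q$ back to $S$ via the length-$4$ representations of $\tau_{4e_\beta}$. The only difference is presentational: the paper's proof is telegraphic (it states "the following can be shown" for the key equivalence in \ii), whereas you supply the details, including the compact parity identity $\sum_\alpha\rho_\alpha\equiv(n-1)\ell\pmod 2$ which streamlines part \ia\ by deriving the orientation-preservation of \emph{all} translations in $S$ directly from their trivial rotational part.
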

\begin{proof}
\ia\ As was pointed out after \eqref{mirhoi},
for $n$ even the number of generators 
$\sum_{\beta=1}^n m_\beta$ of the identity element $(0,0)$ 
is even, and so $\Sigma$ is always orientable. 
Furthermore, an inspection of the proof of Thm.~\ref{4znlattice}
establishes that $\tau_{4 e_i}$ is the composition of four generators of $S$,
namely, $\tau_{4e_i}=s_1 \circ s_2 \circ s_1 \circ s_2$.
%
Furthermore, each element of $\Lambda^0$ is a composition
of an even number of generators of $S^Q$ by~\eqref{Lambda0}.
Therefore the quotient surface 
$\Sigma/ (\Lambda(\Sigma) \cap (2\Z)^n)$ is orientable, too. 

\noindent
\ii\ 
The following can be shown:
$\Sigma$ is not orientable if and only if there exists 
an odd number of generators of $S^Q$ whose composition is $(0,0)\in S^Q$. 
This holds for $n$ odd if and only if the exceptional generator 
$(s^0_1 \circ \ldots \circ s^0_n)$ of $\Lambda^Q(\Sigma) \cap (2\Z_4)^n$ 
does lie in ${\Lambda^0}$.
%
\end{proof}

We are not aware of embedded examples of non-orientable surfaces $\Sigma$
for odd dimension.  However, for $n=5$ the 
example $\Gamma = 145\,231\,425\,232$ generates a non-oriented
immersed surface~$\Sigma$.  It may well be that all embedded 
surfaces are orientable -- this is an open problem.

\subsection{Bound on the number of edges}

As pointed out before, the likelihood 
for surfaces~$\Sigma$ to be embedded decreases as 
the length $m$ of $\Gamma=\partial\Sigma_0$ increases. 
In fact, we can state an upper bound for the length of~$\Gamma$: 
\pagebreak[3]
\begin{proposition} \label{pr:bound}  
  Let the dimension $n$ be even. 
  If the number $m$ of edges of $\Gamma$ exceeds $4 (n-1)$, 
  then the surface $\Sigma$ cannot be embedded.
\end{proposition}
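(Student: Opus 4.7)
The plan is to prove the contrapositive: assume $\Sigma$ is embedded and deduce $m\leq 4(n-1)$. Since $n$ is even, Thm.~\ref{criterion}\ia\ forces $|\Lambda^Q(\Sigma)\cap(2\Z_4)^n|=4$, which by Prop.~\ref{lattice1} equals $|\Lambda^0|$. I identify $(2\Z_4)^n$ with $\Z_2^n$ via $2\mapsto 1$; then $\Lambda^0$ becomes a two-dimensional $\Z_2$-subspace of $\Z_2^n$, and its three nonzero elements can be written $\{a,b,a+b\}$ for some $\Z_2$-basis $a,b$.

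The central structural claim is that for each direction $\beta\in\{1,\ldots,n\}$, if $k_\beta$ denotes the number of edges of $\Gamma$ in direction $\beta$, then the $k_\beta-1$ corresponding generators of $\Lambda^0$ from~\eqref{Lambda0} are pairwise distinct nonzero elements lying in the coordinate hyperplane $\{v_\beta=0\}$. Distinctness holds because any two distinct edges of~$\Gamma$ yield distinct generators of $S^Q$ (their translational parts, described in~\eqref{formgenS}, already differ in $\Z_4^n$ as all edges lie in the unit cube), and composition with the fixed reference $s_\beta^0$ preserves distinctness in $S^Q$; nonzeroness holds precisely because we exclude the reference edge itself. A brief case split on the $\beta$-coordinate then shows that the number of elements of $\{a,b,a+b\}$ lying in $\{v_\beta=0\}$ is $3$ when $a_\beta=b_\beta=0$ and exactly $1$ otherwise.

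Combining these two facts yields $k_\beta\leq 4$ for $\beta\notin\mathrm{supp}(a)\cup\mathrm{supp}(b)$ and $k_\beta\leq 2$ for $\beta\in\mathrm{supp}(a)\cup\mathrm{supp}(b)$. Since $a,b$ are distinct nonzero vectors in $\Z_2^n$, they cannot both be supported in a single coordinate, so $k:=|\mathrm{supp}(a)\cup\mathrm{supp}(b)|\geq 2$. Summing the bounds over $\beta$ then gives
\[
  m \;=\; \sum_{\beta=1}^{n} k_\beta \;\leq\; 4(n-k) + 2k \;=\; 4n-2k \;\leq\; 4(n-1),
\]
which is the contrapositive of the desired statement. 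The main obstacle is the structural claim in the middle paragraph; once distinctness and nonzeroness of the direction-$\beta$ generators within a fixed hyperplane of $\Lambda^0\cong\Z_2^2$ are secured, the remaining argument is a short combinatorial bookkeeping.
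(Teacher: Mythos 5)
Your proof is correct and rests on the same key insight as the paper's: for each direction $\beta$, the pairs of parallel edges in that direction produce distinct nonzero elements of $\Lambda^0 = \Lambda^Q(\Sigma)\cap(2\Z_4)^n$ lying in $\{v_\beta = 0\}$, and when $\Sigma$ is embedded this group has only four elements, which tightly constrains each $k_\beta$. This is exactly the content of the paper's Lemma~\ref{generatorzero}, which you re-derive inline. The final bookkeeping differs slightly: the paper observes that $m > 4(n-1)$ together with evenness of $m$ forces $m \ge 4n-2$, hence at least $n-1$ directions with four edges, and then derives a contradiction from the lattice being forced into a single coordinate axis; you instead track the supports of the two generators $a,b$ of $\Lambda^0\cong\Z_2^2$ and sum $k_\beta$ over all directions, obtaining $m \le 4n - 2|\mathrm{supp}(a)\cup\mathrm{supp}(b)| \le 4(n-1)$ directly. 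Your counting is a bit more refined (it shows where the bound is tight, namely when the combined support has size exactly two), but mathematically the two routes are equivalent in substance and length.
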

The bound is sharp, as the following
Jordan path of length $4(n-1)$ in~$\R^n$ shows:
\[
  \Gamma= 1\,\; 3\nearrow n \;\, 2\, \; n \searrow 3 \;\, 1\, \; 
     3 \nearrow n\, \; 2\, \; n \searrow 3
\]
%
Here, arrows denote an increasing or decreasing sequence of consecutive
positive integers.
By applying Thm.~\ref{criterion} and Proposition \ref{lattice1} 
it can be shown that $\Gamma$ leads to an embedded surface for all $n\ge 4$,
even or odd.

For the proof, and for later reference, let us show:
\begin{lemma} \label{generatorzero}
  Let $n$ be even and $\Sigma\subset\R^n$ be embedded.\\
  \ia\ Then $\Gamma$ cannot contain more than $4$ edges 
  in any given direction.\\
  \ii\ If\/ $\Gamma$ contains $4$ edges in the $\beta$-direction
  then each element $v\in\Lambda^Q(\Sigma) \cap (2\Z_4)^n$ 
  has component $v_\beta=0$.
\end{lemma}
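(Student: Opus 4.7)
The plan is to exploit Theorem~\ref{criterion}\ia, which says that if $\Sigma$ is embedded with $n$ even then the group $\Lambda^Q(\Sigma)\cap(2\Z_4)^n$ has exactly four elements, and to produce $k_\beta$ distinct elements of this group out of the $k_\beta$ edges of $\Gamma$ in direction $\beta$.

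Concretely, let $e_1,\ldots,e_{k_\beta}$ be the edges of $\Gamma$ in direction $\beta$ and $s_1,\ldots,s_{k_\beta}\in S^Q$ the associated generators. I would set $v_i:=T(s_i\circ s_1)\in\Z_4^n$ for $i=1,\ldots,k_\beta$. Lemma~\ref{commutative} (self-inverseness) gives $v_1=0$; Proposition~\ref{lattice2} applied to the pair $(e_1,e_i)$ shows $v_i\in(2\Z_4)^n$ with $v_{i,\beta}=0$; and Proposition~\ref{pr:evendimlattice} places $v_i$ inside $\Lambda^Q(\Sigma)\cap(2\Z_4)^n$.

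The key step is to check that the $v_i$ are pairwise distinct. If $v_i=v_j$, then $s_i\circ s_1=s_j\circ s_1$ in $S^Q$ (both elements have trivial rotational part and identical translational part), and composing with the self-inverse~$s_1$ yields $s_i=s_j$. But two distinct parallel edges $e_i\neq e_j$ have midpoints agreeing in the $\beta$-coordinate and differing in some $\delta\neq\beta$ by~$\pm 1$, so by~\eqref{formgenS} the corresponding translational parts differ in the $\delta$-coordinate by $\pm 2\bmod 4$; hence $s_i\neq s_j$ in $S^Q$, a contradiction.

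Both conclusions then follow by counting inside the four-element group $\Lambda^Q(\Sigma)\cap(2\Z_4)^n$: part~\ia\ is the pigeonhole observation that $k_\beta\geq 6$ would give at least six distinct elements in a set of size four, while for part~\ii, if $k_\beta=4$ then the four distinct elements $v_1,\ldots,v_4$ must exhaust the lattice, and since each has vanishing $\beta$-coordinate so does every lattice element. The only real subtlety is the distinctness argument, which ultimately rests on the elementary fact that $1\not\equiv 3\pmod 4$; everything else is bookkeeping in structures already developed in Sections~\ref{se:quotients}--\ref{seccriterion}.
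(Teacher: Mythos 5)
Your proposal is correct and follows essentially the same route as the paper: form the compositions $s_i\circ s_1$ with a fixed generator $s_1$, observe they land in $\Lambda^Q(\Sigma)\cap(2\Z_4)^n$ with vanishing $\beta$-component and are pairwise distinct, then invoke Thm.~\ref{criterion}\ia{} to bound and then identify that four-element group. Your explicit midpoint argument for distinctness, and the remark that $v_1=0$ by self-inverseness, merely spell out steps the paper leaves implicit.
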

\begin{proof}
  \sloppypar
  Assume there are $k$ distinct edges in some direction $\beta$,
  giving rise to generators $s_1,\ldots,s_k$ of $S^Q$.  
  Consider the compositions
  $s_{i1}:=s_{i} \circ s_{1}$ for $1\le i\le k$.
  Since the $k$ edges are parallel and distinct,
  the elements $s_{i1}$ have vanishing rotational part,
  but their translational parts are pairwise distinct.
  Also, $T(s_{i1})\in\Lambda^Q(\Sigma) \cap (2\Z_4)^n$
  by Prop.~\ref{lattice1}.
  For $\Sigma$ embedded, 
  $\Lambda^Q(\Sigma) \cap (2\Z_4)^n$ contains four elements
  by Thm.~\ref{criterion}\ia, and so $k\le 4$, thereby proving~\ia.

  Also, by Prop.~\ref{lattice2} the elements $s_{i1}$ 
  have vanishing $\beta$-co\-ordinate.
  If we assume $k=4$ and embeddedness then $s_{11},\ldots,s_{41}$
  make up for the four elements of $\Lambda^Q(\Sigma) \cap (2\Z_4)^n$.
  Therefore, any further element must agree with some
  $s_{11}, \ldots , s_{41}$, and so it must have vanishing $\beta$-component,
  as claimed in~\ii.
\end{proof}

\begin{proof}[Proof of the Proposition]
  Suppose $m>4(n-1)$ and $\Sigma$ were embedded. 
  That $m$ is even implies $m\ge 4n-2$.
  Since $6$ edges in the same direction are impossible by the lemma,
  $\Gamma$ must contain $4$ edges 
  in at least $n-1$ coordinate directions, say for
  $\beta=1, \ldots, {n-1}$.
  %
  By the lemma each element $v\in\Lambda^Q(\Sigma) \cap (2\Z_4)^n$ has 
	vanishing $\beta$-component for $\beta=1$ through $\beta=n-1$.  
  But there are only two such elements in $(2\Z_4)^n$,
  namely $(0,\ldots, 0,2)$ and $0$, a contradiction to Thm.~\ref{criterion}.
\end{proof}

A similar reasoning can be applied show that $m\le 8(n-3)+3\cdot 6$ 
holds for embedded $\Sigma$ when the dimension $n$ is odd.
Furthermore, we may replace $\Lambda^Q(\Sigma) \cap (2\Z_4)^n$ by $\Lambda^0$ 
in the reasoning above. 
This shows the bound  $m \le 4 (n-1)$ holds for all 
oriented embedded surfaces $\Sigma$ in odd dimensions $n$ as well.

\subsection{All embedded $4$-dimensional surfaces}

We can now classify all embedded $n$-periodic surfaces $\Sigma$ 
in dimension $n=4$ up to symmetry, see Figs.~\ref{fi:R4embedded}
and~\ref{abbkurven4D}.
\pagebreak[3]
\begin{theorem} \label{theoremR4}  
  There are exactly five different Jordan paths $\Gamma$ 
  generating embedded surfaces $\Sigma\subset\R^4$. 
  The surfaces are oriented, with oriented quotients.
  They are generated by the following Jordan paths:\\
  $\bullet$ For $m=8$ edges the Jordan paths
  \[
    \Gamma_3 = 1231\,4234, \quad \Gamma_4 = 1231\,4324, \quad  
    \Gamma_6 = 1232\,1434 
  \]
  \hphantom{$\bullet$}
  generate embedded surfaces of genus $9$.\\
  $\bullet$ For $m=10$ the path $\Gamma_7 := 123\,1413\,214$
  generates a surface of genus~$13$.\\
  $\bullet$ For $m=12$ the path $\Gamma_{8} := 123\,214\,123\,214$
  generates a surface of genus~$17$.\\
  Here, the genus is with respect to the lattice $\Lambda(\Sigma)\cap (2\Z)^4$.
\end{theorem}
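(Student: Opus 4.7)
The plan is to combine the upper bound from Prop.~\ref{pr:bound} with the algorithmic embeddedness test of Section~\ref{seccriterion}. By Prop.~\ref{pr:bound}, an embedded $\Sigma\subset\R^4$ forces $m\le 4(n-1)=12$. Since $\Gamma$ must contain an even number of edges in each of the $n=4$ coordinate directions, $m$ is even, and since at least one edge per direction is needed, $m\ge 2n=8$. Hence $m\in\{8,10,12\}$. The case $m=8$ is already Thm.~\ref{eightedgesembedded}, yielding the three Jordan paths $\Gamma_3,\Gamma_4,\Gamma_6$.

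For $m=10$, the only possible distribution of edges over the four coordinate directions (all counts even and $\ge 2$, total $10$) is $(2,2,2,4)$ up to permutation. I would enumerate the Jordan paths with this distribution up to the four symmetry operations listed in Section~\ref{se:J}, using the gap-length invariants $a(1),\ldots,a(n)$ to separate equivalence classes. For each candidate, I compute $\Lambda^0$ by Prop.~\ref{lattice1} with the explicit generators from Prop.~\ref{lattice2}, and then apply Thm.~\ref{criterion}\ia: the surface is embedded iff $|\Lambda^Q(\Sigma)\cap(2\Z_4)^4|=4$. The claim is that this singles out $\Gamma_7=123\,1413\,214$. For $m=12$, Lemma~\ref{generatorzero}\ia\ rules out any direction carrying six edges, so the distribution must be $(2,2,4,4)$. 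Lemma~\ref{generatorzero}\ii\ further forces every element of $\Lambda^Q(\Sigma)\cap(2\Z_4)^4$ to have vanishing components in the two directions with four edges, which drastically shrinks the list of candidates to test; the enumeration yields only $\Gamma_8=123\,214\,123\,214$.

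Orientability of $\Sigma$ and of its quotient is then immediate from Prop.~\ref{proporient}\ia, since $n=4$ is even. For the genus of $\Sigma/(\Lambda(\Sigma)\cap(2\Z)^4)=\Sigma^Q/\Lambda^0$ I would argue by Euler characteristic. For any embedded surface with $n=4$, Thm.~\ref{criterion}\ia\ gives $|\Lambda^0|=4$ and Thm.~\ref{thmfilledcubes} gives $|S^Q|=2^{n+2}=64$; hence the quotient contains $F=|S^Q|/|\Lambda^0|=16$ surface patches. Each patch is a disk bounded by $m$ edges and $m$ vertices of the cube tessellation. By Schwarz reflection each edge is shared by exactly two patches, and by embeddedness together with Cor.~\ref{co:intersectionvertex}\ii\ each vertex is shared by exactly four patches. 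Therefore $E=\tfrac{1}{2}Fm=8m$ and $V=\tfrac{1}{4}Fm=4m$, giving
\[
  \chi = V-E+F = 4m-8m+16 = 16-4m,
\]
and, since the quotient is orientable, $g=1-\chi/2=2m-7$, which evaluates to $9,13,17$ for $m=8,10,12$ respectively.

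The main obstacle is the purely combinatorial enumeration of Jordan paths for $m=10$ and $m=12$ up to the four symmetries of Section~\ref{se:J}. Lemma~\ref{generatorzero} cuts down the candidate list substantially, but producing a clean catalogue and showing that only one representative per length passes the $|\Lambda^0|=4$ test still requires careful bookkeeping with the gap-length invariants $a(\beta)$ and the explicit signs of the generators produced by Prop.~\ref{lattice2}.
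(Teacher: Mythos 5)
Your overall strategy matches the paper's proof step for step: bound $m\le 4(n-1)=12$ via Prop.~\ref{pr:bound}, reduce to $m\in\{8,10,12\}$ with edge distributions $(2,2,2,2)$, $(4,2,2,2)$, $(4,4,2,2)$, invoke Thm.~\ref{eightedgesembedded} for $m=8$, use Lemma~\ref{generatorzero} and Prop.~\ref{lattice2} to constrain the remaining candidates, test via $|\Lambda^Q(\Sigma)\cap(2\Z_4)^4|=4$, and finally get orientability from Prop.~\ref{proporient}. Your Euler-characteristic computation for the genus is exactly the paper's: with $|S^Q|=2^{n+2}$ and $|\Lambda^0|=4$ the quotient has $F=2^n=16$ patches, each edge shared by two patches and each vertex by four, so $\chi=16-4m$ and $g=2m-7$; the identification of the vertex multiplicity as exactly four via embeddedness and Cor.~\ref{co:intersectionvertex}\ii\ is precisely the input needed and is correct.

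The one genuine gap, which you flag yourself, is that the enumeration for $m=10$ and $m=12$ is not actually carried out; you assert that it ``singles out'' $\Gamma_7$ and $\Gamma_8$ but do not show it. The paper closes this gap without a full catalogue by exploiting Prop.~\ref{lattice2} more aggressively. Say direction~$1$ carries four edges. Then by Lemma~\ref{generatorzero}\ii\ every lattice element has first coordinate zero, so by Prop.~\ref{lattice2} every pair of edges in a direction $\beta\in\{2,3,4\}$ must enclose an even number of $1$'s. For $m=10$ this forces $\Gamma$, up to symmetry, into one of two coarse patterns according to whether some pair of $1$'s encloses both members of another pair: the pattern $1\ldots 2\cdots 2\ldots 1\cdots1\cdots1\cdots$ pins $\Gamma$ to $12321\,41314$ (which fails the lattice test), while $1\ldots2\ldots1\cdots1\ldots2\ldots1\cdots$ pins it to $\Gamma_*=1231\,4\,1231\,4$ (fails) or $\Gamma_7=1231\,4\,1321\,4$ (passes). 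For $m=12$ the same parity constraint in directions $1$ and $2$ simultaneously leaves two patterns, one of which has too many mandatory gaps for the four remaining symbols, and the other is forced to be $\Gamma_8$. This targeted argument is what makes the case analysis a short hand calculation rather than a catalogue; your bookkeeping plan via gap-length invariants would also work but is substantially heavier. Everything else in your writeup would compile to a correct proof once this case analysis is written out.
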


\begin{proof}
  The case $m=8$ is covered by Thm.~\ref{eightedgesembedded},
  and $m>12$ contradicts Prop.~\ref{pr:bound}.
  Therefore we need to discuss the cases $m=10$ and $12$ only.

\begin{figure}[b]
\begin{center}
\includegraphics[scale=0.5]{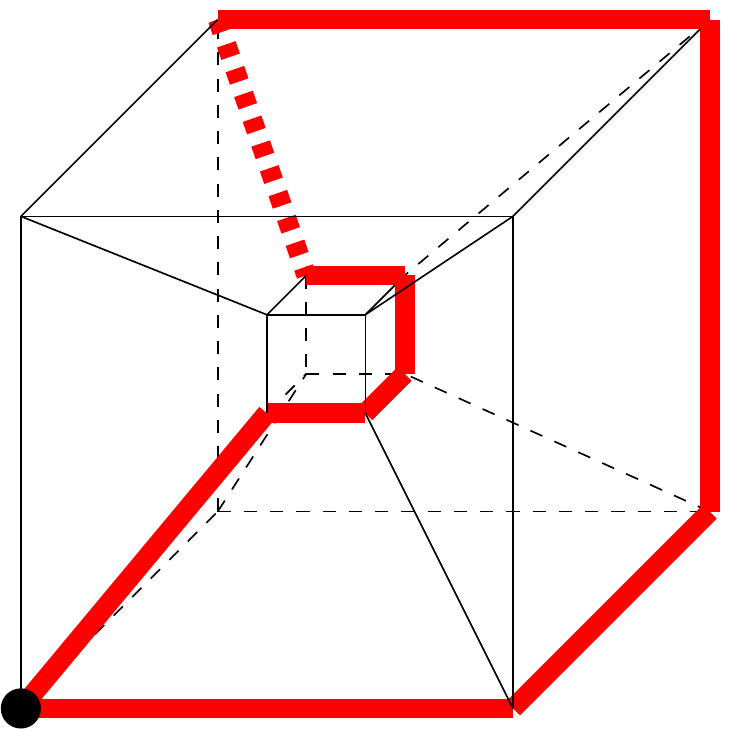}\hspace{5mm}
\includegraphics[scale=0.5]{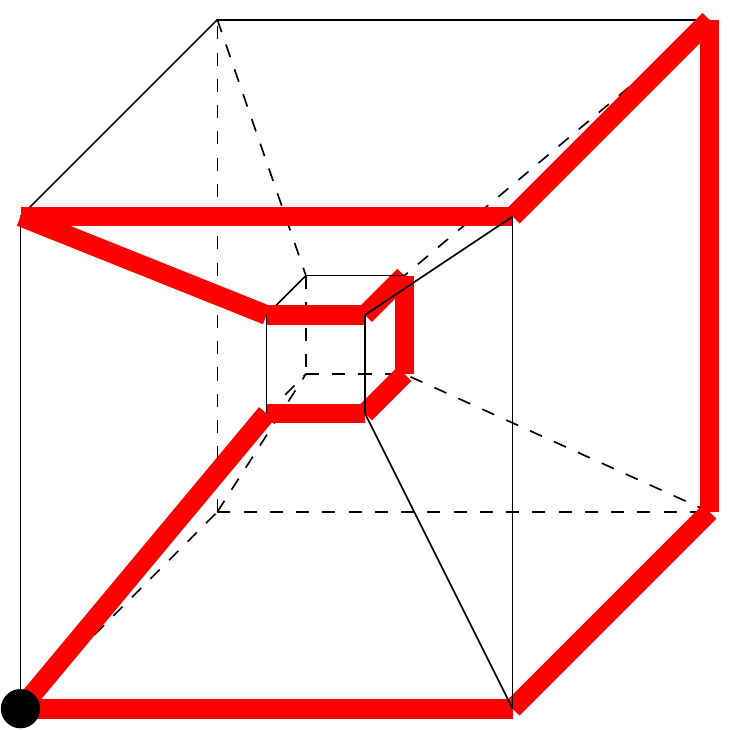}
\end{center}
\caption{The Jordan paths 
  $\Gamma_7=123\,1413\,214$ and $\Gamma_8=123\,214\,123\,214$,
  which lead to embedded surfaces in $\R^4$.\label{abbkurven4D}}
\end{figure}

\noindent Case $m=10$: 
  Then $\Gamma$ must have four edges in some direction, 
  which we assume to be~$1$,
  and two edges in the remaining three directions $\beta=2,3,4$. 
  Let $\Sigma$ be embedded.
  Then, by Lemma~\ref{generatorzero},
  every element in $\Lambda^Q(\Sigma) \cap (2\Z_4)^4$ 
  has the first coordinate zero.
  Moreover Prop.~\ref{lattice2} gives
  that each pair of edges in one of the $\beta$-directions, 
  $2\leq \beta \leq n$,
  encloses an even number of edges in the $1$-direction.
  Moreover, two edges in the same direction cannot be consecutive.

  Suppose that $\Gamma$ contains a pair of $1$'s which encloses
  a pair running in the same direction, let it be $2$. 
  Then $\Gamma$ must be of the form
  \[
    \Gamma= 1 \ldots 2  \cdots  2 \ldots 1 \cdots 1 \cdots 1 \cdots,
  \]
  where only the slots marked with low dots ($\ldots$) can be empty.
  Up to permutation, this lets $\Gamma$ be $12321\,41314$.
  We claim the lattice then contains the three independent directions
  $(0,0,2,0)$, 
  $(0,0,0,2)$, 
  $(0,2,0,2)$. 
  These are obtained by applying Prop.~\ref{lattice2}:
  The first vector arises for the pair of $2$'s,
  the second for the pair of $1$'s enclosing a $4$,
  the third for the pair of $3$'s.
  Thus $|\Lambda^Q(\Sigma) \cap (2\Z_4)^4|\ge 8$, meaning
  that $\Sigma$ has self-intersections by Thm.~\ref{criterion}\ia.
  
  In the other case, $\Gamma$ is of the following form, up to permutation,
  where again only the low dots can remain empty:
  \[
     \Gamma = 1 \ldots 2 \ldots 1 \cdots  1 \ldots 2 \ldots 1 \cdots
  \] 
  Reasoning as before this leaves us with 
  $\Gamma_* = 1231\, 4\, 1231\, 4$ or $\Gamma_7 = 1231\, 4\, 1321\, 4$,
  up to permutation.
  For $\Gamma_*$ again we get 3 independent generators (with a leading $0$), 
  implying $|\Lambda^Q(\Sigma) \cap (2\Z_4)^4|\ge 8$, 
  so that self-intersections arise.

  So we are left with $\Gamma_7$.
  For the direction $1$, we invoke Prop.~\ref{lattice1}, 
  select the first~$1$ for $s_1^0$, 
  and obtain (again by Prop.~\ref{lattice2}) 
  the three generators $(0,2,2,0)$, $(0,2,2,2)$, $(0,0,0,2)$.
  Moreover, for the directions $2,3,4$, we get a generator each,
  namely $(0,0,0,2)$, $(0,0,0,2)$, $(0,2,2,0)$.
  This gives 
  \[
    |\Lambda^Q(\Sigma) \cap (2\Z_4)^4| 
    = \bigl|\left \langle \{ (0,2,2,0), (0,0,0,2) \} \right \rangle\bigr|  
    = 4  .
  \]
  Thm.~\ref{criterion}\ia\ proves embeddedness.

\smallskip
\noindent Case $m=12$:
  For an embedded surface, Lemma~\ref{generatorzero}\ia\ gives that
  $\Gamma$ contains at most four edges in any given direction.
  Therefore, there are exactly two directions, say $1$ and $2$,
  for which $\Gamma$ contains four edges.
  By Lemma~\ref{generatorzero}\ii\ then all elements 
  from $\Lambda^Q(\Sigma) \cap (2\Z_4)^4$ 
  have their first and second coordinates equal to zero.
  Combining this fact with Prop.~\ref{lattice2} 
  gives that each pair of consecutive $1$'s
  encloses an even number of~$2$'s, and vice versa.
  Consequently $\Gamma$ can be 
  $$
    \Gamma_{*} = 1\,2\cdots 2 \cdots 2 \cdots 2\, 1 \cdots 1 \cdots 1 \cdots
    \quad\text{or}\quad
    \Gamma_{8} = 1\,2 \cdots 2\, 1 \cdots 1\, 2 \cdots 2 \, 1 \cdots,
  $$
  where all gaps must be non-empty.  However,
  $\Gamma_{*}$ is impossible as 4~remaining numbers cannot fill $6$~gaps.
  Let us show there is a unique way of filling the gaps of~$\Gamma_8$.
  By the argument used before
  there is an even number of $1$'s between a pair of $3$'s or $4$'s.
  Up to permutation,
  this identifies $\Gamma_8$ as claimed in the theorem.

  The last step of the proof is 
  to check $\Gamma_{8}$ leads to an embedded surface $\Sigma$. 
  By Prop.~\ref{lattice2} and Prop.~\ref{pr:evendimlattice} 
  the first and second coordinates of each element 
  from $\Lambda^Q(\Sigma) \cap (2\Z_4)^4$ is equal to zero.
  So Thm.~\ref{criterion}\ia\ proves embeddedness.
\smallskip

\sloppypar
  To compute the genus, let us first compute the 
  number of filled cubes in
  $\R^n/(\Lambda(\Sigma) \cap (2\Z)^4)$.
  The quotient $\R^n/(4\Z)^n$ contains $4^n$ cubes,
  from which, according to Thm.~\ref{criterion}\ia, 
  exactly $|S^Q|=2^{n+2}$ are filled.
  Now $n=4$ is even, and so again Thm.~\ref{criterion}\ia\ gives
  that $T^n$ has 
  $|\Lambda(\Sigma)^Q \cap (2\Z_4)^4|=4$ further translations.
  Therefore, the quotient $\R^n/(\Lambda(\Sigma) \cap (2\Z)^4)$ contains
  $2^{n+2}/4 = 2^n$ filled cubes, where $n=4$.

  We use the Euler formula to compute the genus.
  If $\Gamma$ has $m$ edges then 
  \[
    \chi\bigl(\Sigma/(\Lambda(\Sigma)\cap (2\Z)^4)\bigr) 
    = V-E+F 
    = 2^n \Bigl(\frac m4- \frac m2 + 1\Bigr)
    = 2^n \Bigl(1- \frac m4\Bigr) .
  \]
  For $n=4$ and $m=8$ this gives $\chi= -16 =2-2g$ or $g=9$,
  while for $m=10$ we have $\chi=
  -24$ and
  $g=13$, and for $m=12$ we have $\chi=-32$, that is, $g=17$.
  Orientability of these quotients follows from Prop.~\ref{proporient}.
\end{proof}

\remark
  It is natural to ask for the genus also with respect to 
  the full lattice of orientation preserving
  translations of $\Sigma$. 
  In the following we show that the genus with respect to this
  lattice agrees with the genus stated in the theorem. 
  The only exception is the surface generated by $\Gamma_8$,
  whose genus reduces to~$9$, provided its initial surface $\Sigma_0$ 
  has all the symmetries of~$\Gamma_8$.
  
  Let us reason for this claim.
  Suppose $\Sigma$ has an orientation preserving translation $\tau$. 
  If $\tau$ is not in the lattice $\Lambda(\Sigma) \cap (2\Z)^4$ 
  then there is an element $(v,\rho)\in S$ such that
  $v=\tau$ and $\rho\not=0$.  
  Hence $(v-\tau,\rho) = (0, \rho)$ is a non-trivial symmetry of $\Sigma_0$,
  and in particular a non-trivial symmetry of $\Gamma$.
  Note that by construction, $\rho$ preserves coordinate directions.

  We inspect the five Jordan paths for such symmetries.
  The paths $\Gamma_3$ and $\Gamma_4$ do not admit any such symmetries.
  The path $\Gamma_6$ is only symmetric under $\rho=(0,0,1,0)$, a reflection
  in the hyperplane $\{x_3=0\}$.  
  Using $\rho^\beta$ as in~\eqref{rhodef} we find the representation
  $\rho=\rho^1\circ\rho^2\circ\rho^4$,
  that is, we represent $(v,\rho)$ with an odd number
  of generators of~$S$. 
  Therefore $\Gamma_6$ has no orientation preserving symmetry.
  Similarly, $\Gamma_7$ only has the symmetry $\rho=(0,0,0,1)$
  which again is orientation reversing.
  However, the path $\Gamma_8$ has two orientation reversing symmetries, 
  $(0,0,1,0)$, $(0,0,0,1)$, 
  whose composition $(0,0,1,1)$ is orientation preserving.

\section{Embedded surface families for all dimensions} \label{ss:families}

As a consequence of Thm.~\ref{criterion}, if a surface is embedded
then the density of filled cubes must be $4\cdot 2^n/4^n = (1/2)^{n-2}$.
As this number decreases in $n$ the question arises
whether embedded surfaces can exist for all dimensions $n$.
We will answer it in the affirmative.

To give an example let us initiate from the Jordan path 
$123\,123$ in $\R^3$.  In the minimal setting
it generates 
the Schwarz-$D$-surface, with
$$ 
  |{\Lambda^0}(\Sigma)| = 4 \quad \mbox{ and} \quad 
  |\Lambda^Q(\Sigma) \cap (2\Z_4)^n| = 8 \, ,
$$
where $\Lambda^0(\Sigma)$ generates the face centred cubic lattice.

To generalize this example to arbitrary dimension $n\ge 3$
we consider a surface $\Sigma_n\subset\R^n$ generated by 
\[
  \Gamma_n= 1\,2\,3\,\nearrow n\; 1\,2\; n \searrow 3,
\] 
where again arrows denote an increasing or decreasing sequence 
of consecutive integers.
We claim that $|\Lambda^0(\Sigma_n)|=4$.  
To verify this, let us calculate the generators in the $\beta$-direction 
using Prop.~\ref{lattice2}:
The generators for $\beta=1$ and $2$ are
$(0,2,2,2,\ldots,2)$ and $(2,0,2,2,\ldots,2)$,
while all $\beta\ge 3$ give the same generator $(2,2,0,0,\ldots,0)$.
The last generator is the sum of the first two generators,
and so our claim is proven.
Combining Prop.~\ref{lattice1} with 
Thm.~\ref{criterion} proves that $\Sigma_n$ is embedded.
In particular, for $n$ odd, we must have
$|\Lambda^Q(\Sigma) \cap (2\Z_4)^n| = 8$, and so
$\Sigma_n$ is orientable in any dimension, by Prop.~\ref{proporient}.

We can apply the same procedure systematically to construct
embedded surfaces in higher dimensions:
\begin{proposition}\label{pr:series}
  Let $\Gamma_n= (\gamma(1)\cdots\gamma(m))$ be a Jordan path in dimension 
  $n$ with $|{\Lambda^0}(\Sigma)|= 4$.
  For $N>n$ and any $1\le \beta\le n$ let the Jordan path $\Gamma_{N,\beta}$ 
  in $\R^N$ be obtained from~$\Gamma$ by 
  replacing every $\gamma(i)=\beta$ in an alternating way 
  by the sequences $\beta\, (n+1) \cdots N$ or $N \cdots (n+1) \,\beta$. 
  Then ${\Gamma_{N,\beta}}$ generates embedded, 
  orientable surfaces $\Sigma_{N,\beta}$ in $\R^N$.
\end{proposition}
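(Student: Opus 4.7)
My plan is to verify $\Gamma_{N,\beta}$ is a Jordan path, establish $|\Lambda^0(\Sigma_{N,\beta})|=4$ via Propositions~\ref{lattice1} and~\ref{lattice2}, and then invoke Theorem~\ref{criterion} for embeddedness and Proposition~\ref{proporient} for orientability.

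For the Jordan path property, closedness reduces to a direction count: each $\delta\le n$ retains its (even) edge count from $\Gamma_n$, while each $k>n$ is used exactly once per insertion, hence $c_\beta$ times (which is even). For embeddedness I would project $\Gamma_{N,\beta}$ onto its first $n$ coordinates: the projection recovers $\Gamma_n$, so any two coincident vertices of $\Gamma_{N,\beta}$ must either project to the same vertex of $\Gamma_n$, in which case they lie inside a single insertion and are separated by their high coordinates, or they would contradict embeddedness of $\Gamma_n$.

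The bulk of the work consists in showing that every element of $\Lambda^0(\Sigma_{N,\beta})$ has the form $(v_1,\ldots,v_n,v_\beta,\ldots,v_\beta)$ with $(v_1,\ldots,v_n)\in\Lambda^0(\Sigma_n)$. I would compute generator by generator. For a pair of $\delta$-edges with $\delta\le n$ and $\delta\neq\beta$ inherited from $\Gamma_n$, Proposition~\ref{lattice2} gives the same low-$n$ components as for the corresponding generator of $\Lambda^0(\Sigma_n)$, and for each high direction $k>n$ the count of $k$-edges between the pair equals the number of $\beta$-insertions between them, so the high components all equal the $\beta$-component of the $\Gamma_n$-generator. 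For a pair of $\beta$-edges, the strict alternation of forward and backward insertions forces the number of $k$-edges between them to be even in every high direction, so the high components vanish, matching the vanishing $\beta$-component of the corresponding $\Gamma_n$-generator. For a pair of $k$-edges with $k>n$ sitting in insertions $i<j$, the high-coordinate midpoint profile $(+\tfrac12,\ldots,0,\ldots,-\tfrac12)$ is the same in forward and backward insertions, so the high part of the composition vanishes; the low-$n$ part coincides with the direction-$\beta$ generator from $\beta$-positions $p_i,p_j$ of $\Gamma_n$, up to a possible $\beta$-component contribution $\eta_i\epsilon_i-\eta_j\epsilon_j$, where $\eta_i=\pm 1$ indicates the insertion type at position $p_i$ and $\epsilon_i=\pm 1$ the traversal direction of the $p_i$-th $\beta$-edge of $\Gamma_n$. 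Crucially, both $\eta_i$ and $\epsilon_i$ change sign as $i$ increases by one (the first by definition of alternating insertions; the second because $\Gamma_n$ is closed and each $\beta$-edge flips the $\beta$-coordinate), so $\eta_i\epsilon_i$ is constant and the contribution vanishes.

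Once this structural form of $\Lambda^0(\Sigma_{N,\beta})$ is established, the projection to the first $n$ coordinates is a bijection onto $\Lambda^0(\Sigma_n)$, and $|\Lambda^0(\Sigma_{N,\beta})|=4$. For even $N$, Theorem~\ref{criterion}~\ia\ directly yields embeddedness; for odd $N$ the bound $|S^Q|\ge 2^{N+2}$ from Theorem~\ref{thmfilledcubes}, combined with $|H|=2^{N-1}$ from \eqref{formH} and Lemma~\ref{thmlattice}, forces $|\Lambda^Q(\Sigma)\cap(2\Z_4)^N|=8$, so Theorem~\ref{criterion}~\ii\ again gives embeddedness. Orientability is automatic for even $N$ by Proposition~\ref{proporient}~\ia, and for odd $N$ follows from $\Lambda^0\neq\Lambda^Q(\Sigma)\cap(2\Z_4)^N$ via Proposition~\ref{proporient}~\ii. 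The main obstacle is the $k$-edge pair case above: without the cancellation $\eta_i\epsilon_i=\eta_j\epsilon_j$, a new element $2e_\beta$ would enter the lattice which generically does not lie in the lift of $\Lambda^0(\Sigma_n)$, doubling the lattice and breaking embeddedness.
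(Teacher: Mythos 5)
Your proof is correct and follows the same route the paper indicates in its one-sentence sketch: apply Prop.~\ref{lattice1}/\ref{lattice2} to show the generators of $\Lambda^0(\Sigma_{N,\beta})$ are in 1-1 correspondence with those of $\Lambda^0(\Sigma_n)$, so $|\Lambda^0(\Sigma_{N,\beta})|=4$, then invoke Thm.~\ref{criterion} and Prop.~\ref{proporient}. You supply the case-by-case parity bookkeeping — in particular the alternation argument showing the extra generators coming from the new high directions $k>n$ are redundant — which the paper leaves entirely implicit.
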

The proof amounts to checking that application of Prop.~\ref{lattice1} 
leads to a 1-1 correspondence of the generators of $\Lambda^0$
for $\Sigma$ and $\Sigma_{N,\beta}$.

\examples
  The Jordan paths $1212$ in $\R^2$ and the paths 
  $123\,123$ and $1232\,1232$ in $\R^3$ 
  give the following Jordan curves generating
  embedded surfaces $\Sigma$ for arbitrary dimension $n$:
  For $1\leq \beta < n$
  \[ 
    \Gamma_a:= 1\nearrow n\; \beta \searrow 1\; n\searrow (\beta+1)
  \]  
  and for $1\leq \alpha<\beta < n$ 
  \begin{align*}
    &\Gamma_b:= 1\nearrow  n\; \alpha \searrow 1 \; \beta \searrow (\alpha+1)\; n \searrow (\beta+1)\\
    &\Gamma_c:= 1\nearrow n \; \beta \searrow (\alpha+1) \; \alpha \searrow 1\;  (\alpha+1) \nearrow \beta  \; n \searrow (\beta+1)\; \beta \searrow (\alpha+1)
    %
\end{align*}
lead to families of embedded, orientable surfaces $\Sigma$ in~$\R^n$.
We have obtained a set of Jordan paths which grows quadratically in~$n$.

We would like to point out that all embedded surfaces $\Sigma$ in~$\R^3$ 
and $\R^4$, given by Prop.~\ref{dim3} and Thm.~\ref{theoremR4}, 
belong to one of these families. 
It remains open whether there are more examples of 
embedded surfaces $\Sigma$ for dimensions~$n>4$. 

\section{Minimal surfaces} \label{se:min}

In the present section we quote standard results from minimal surface theory.
First we verify that Plateau solutions generate initial surfaces.

\begin{proposition}\label{pr:plateausol}  
  \ia\ \emph{Plateau solution:}
  For each Jordan path $\Gamma$ in $\R^n$ 
  there exists a continuous map 
  $f\colon \overline D\to C=[-\frac12,\frac12]^n$,
  where $D$ is the open unit disk in~$\R^2$,
  such that $f$ is a $C^\infty$ conformal harmonic immersion from~$D$
  to $(-\frac12,\frac12)^n$.  Moreover, 
  the restriction $f\colon \partial D\to \Gamma$ is bijective.\\
  \ii\ \emph{(Interior) branch points:}
  $f$ is a regular surface
  if there exists a coordinate direction $1\le \alpha\le n$ such that 
  $\Gamma$ contains only two edges in the $\alpha$-direction.\\
  \iii\ \emph{Embedding:} 
  If there exist
  $1\le \alpha\not=\beta \le n$ such that the projection
  $\pi_{\alpha\beta}\colon \Gamma\to \partial([0,1]^2)$ of $\Gamma$ to the 
  $\alpha\beta$-plane is monotone
  then $f$ is an embedding.
\end{proposition}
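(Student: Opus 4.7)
The three parts invoke distinct classical tools and I would treat them in order.

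For (i), my plan is to apply the Douglas–Radó solution of the Plateau problem: since $\Gamma$ is a rectifiable Jordan curve in $\R^n$, there is a continuous map $f\colon\overline D\to\R^n$ that minimises Dirichlet energy among weakly monotone parametrisations of $\Gamma$ by $\partial D$. Interior regularity of energy-minimising harmonic maps from a surface gives $f\in C^\infty(D)$ together with conformality, and because $\Gamma$ is piecewise linear (hence quasiconformal at each vertex) classical boundary regularity upgrades the weakly monotone trace to a homeomorphism $\partial D\to\Gamma$. That $f(D)\subset(-\tfrac12,\tfrac12)^n$ follows coordinatewise from the strong maximum principle: each $f_\alpha$ is harmonic and non-constant, and its boundary values lie in $[-\tfrac12,\tfrac12]$, so strict inequality holds in~$D$.

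For (ii), the key observation is that a branch point of a conformal harmonic map from a Riemann surface is exactly a point where $df=0$; in particular $\nabla f_\alpha$ vanishes there for every coordinate. It therefore suffices to exhibit one coordinate $f_\alpha$ without interior critical points. Under the hypothesis, the two edges of $\Gamma$ in the $\alpha$-direction split $\partial D$ into two arcs on which $f_\alpha$ is monotone between its extreme values $\pm\tfrac12$, while on the remaining edges $f_\alpha$ is locally constant at $\pm\tfrac12$. Hence $f_\alpha|_{\partial D}$ has a single maximum plateau and a single minimum plateau. I would then invoke the classical count (due to Morse, refined by Shiffman and others) that for a harmonic function on a disk the interior critical points, weighted by multiplicity, number one less than the boundary maxima: with a single boundary maximum this forces $\nabla f_\alpha\neq 0$ on all of $D$, ruling out branch points of $f$.

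For (iii), the plan is to apply the Radó–Kneser–Choquet theorem to $g:=\pi_{\alpha\beta}\circ f\colon\overline D\to[0,1]^2$. Each component of $g$ is harmonic on $D$, and the monotonicity assumption on the projection of $\Gamma$ ensures that $g|_{\partial D}$ is a weakly monotone parametrisation of the boundary of the convex target $[0,1]^2$. The theorem then guarantees that $g$ is a diffeomorphism of $D$ onto $(0,1)^2$. Injectivity of $g$ implies injectivity of $f$ on $D$, and combined with the boundary homeomorphism from (i) and the disjointness $f(D)\subset(-\tfrac12,\tfrac12)^n$, $f(\partial D)\subset\partial C$, injectivity extends to all of $\overline D$. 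Compactness of $\overline D$ then makes $f$ a topological embedding.

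The main obstacle is the harmonic critical-point count in (ii): it must be adapted to the piecewise linear, plateau-type boundary data arising from an edge loop rather than the generic Morse boundary trace in which it is usually stated. I would handle this either by approximating $f_\alpha|_{\partial D}$ by smooth perturbations with the same number of maxima and passing to the limit, or more directly by computing the winding number of $\nabla f_\alpha$ around $\partial D$ and using the Poincaré–Hopf index formula to see that the total index of interior critical points of $f_\alpha$ is zero, which in the harmonic (hence saddle-only) case excludes critical points altogether. Parts (i) and (iii) then reduce to citing standard Plateau and Radó–Kneser–Choquet statements once the hypotheses are verified.
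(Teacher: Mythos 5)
Your treatment of parts (i) and (iii) matches the paper's: for (i), Douglas–Radó existence followed by the coordinatewise strong maximum principle (the paper cites Osserman, Thm.~7.1), and for (iii), a Radó-type projection argument (the paper cites Osserman, Thm.~7.2 for the strictly $1$--$1$ projection, and Nitsche for the weakly monotone case with ``vertical'' edges projecting to corners — which is exactly the plateau subtlety one must also confront when invoking Radó–Kneser–Choquet, since $g|_{\partial D}$ is only weakly monotone, not a homeomorphism). For (ii), however, you use a genuinely different lemma. You argue \emph{globally}: since $f_\alpha|_{\partial D}$ has a single max-plateau and a single min-plateau, a critical-point count (Poincaré–Hopf / winding number of $\nabla f_\alpha$) forces $\nabla f_\alpha\neq 0$ throughout $D$, hence no branch points. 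The paper argues \emph{locally} at a hypothetical branch point $p$: setting $q=f(p)$ and $h:=\langle f-q,e_\alpha\rangle$, the nodal set $h^{-1}(0)$ has at least four analytic arcs through $p$, which by the maximum principle reach $\partial D$ at four distinct points; by boundary injectivity these give four points of $\Gamma$ in the hyperplane $\{x_\alpha=q_\alpha\}$, contradicting the fact that this hyperplane meets $\Gamma$ in exactly two points when $q_\alpha\in(-\tfrac12,\tfrac12)$ and there are only two $\alpha$-edges. Both routes rest on the same geometric fact — the level set $\{x_\alpha=c\}$ meets $\Gamma$ twice — but the paper's version sidesteps the index bookkeeping for plateau boundary data that you correctly flag as the delicate step of your argument, and is arguably the more robust of the two.
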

\begin{proof}
  \ia\ The existence of the minimal disk is standard, 
  see e.g.~\cite[Thm.~7.1]{oss}.
  Since $\Gamma$ is not contained in a face of the cube~$C$
  the maximum principle proves that $D$ is mapped into its interior.
  
  \noindent
  \ii\ 
  Consider the harmonic function $h:=\langle f-q,\,v\rangle$,
  where $q,v\in\R^n$.
  Since $\Gamma$ is not contained in a hyperplane the set $h^{-1}(0)$ does
  not coincide with the entire disk~$D$, 
  and so the analyticity of~$h$ implies
  $h^{-1}(0)$ is the union of proper smooth curves, 
  which meet at the isolated branch points of~$h$.

  Suppose $p\in D$ is a branch point of $f$, with image $q:=f(p)$. 
  Then $h$ also has a branch point at~$p$
  and there are at least four arcs in $h^{-1}(0)$ emanating from~$p$.
  Moreover, a pair of consecutive arcs must have distinct endpoints 
  on~$\partial D$, by the maximum principle.  
  Thus there are at least four distinct points on $\partial D$
  which are zeros of~$h$.  
  The injectivity of~$f$ restricted to $\partial D$ implies 
  these correspond to four distinct points of $\Gamma$ 
  on a hyperplane through~$q$.
  However, according to our assumption, for the choice $v:=e_\alpha$ 
  there are at most two such points, a contradiction.

  \noindent
  \iii\ This is a consequence of Rado's lemma: 
  For the special case that a Jordan curve has a 1-1~projection 
  onto the boundary of a convex set, it is proven in \cite[Theorem 7.2]{oss} 
  that the minimal surface can be represented as a graph 
  over the $\alpha\beta$-plane, in particular it is embedded.
  For our more general case with possible vertical segments, 
  the arguments of Nitsche \cite[\paragraph 401]{nit} (given there for $n=3$) 
  prove the claim.
\end{proof}
Second, we show that for the contours which generate embedded surfaces,
these specifically generate embedded minimal surfaces.
\begin{theorem}
  Suppose $\Gamma$ is 
  one of the five Jordan curves in $\R^4$ of Thm.~\ref{theoremR4} 
  or, more generally,
  contained in one of the families $\Gamma_a,\Gamma_b,\Gamma_c\subset\R^n$
  defined in Sect.~\ref{ss:families}.\\
  \ia\ Then $\Gamma$ bounds an initial minimal surface~$\Sigma_0$,
  in particular $\Sigma_0$ is embedded.\\
  \ii\ The completion by Schwarz reflection $\Sigma$ of~$\Sigma_0$ 
  is an embedded minimal surface.
\end{theorem}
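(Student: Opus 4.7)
The plan is to split the theorem into its two parts and apply Prop.~\ref{pr:plateausol} to each admissible Jordan path separately. For part (i), I would first invoke Prop.~\ref{pr:plateausol}(i) to produce, for every listed~$\Gamma$, a Plateau solution $f\colon\overline D\to C$ which is a conformal harmonic immersion into the open cube $(-\tfrac12,\tfrac12)^n$. To promote $f$ to a regular, embedded initial minimal surface~$\Sigma_0$ I then need to rule out both branch points and self-intersections, i.e.\ verify conditions (ii) and (iii) of that proposition.

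To rule out branch points I would verify the hypothesis of Prop.~\ref{pr:plateausol}(ii) by direct inspection of the edge counts in the word of~$\Gamma$. For $\Gamma_3,\Gamma_4,\Gamma_6$ every coordinate direction carries exactly two edges; for $\Gamma_7$ the directions $2,3,4$ do; for $\Gamma_8$ the directions $3,4$ do. A similar counting argument shows that in the families $\Gamma_a$ and $\Gamma_b$ every direction carries exactly two edges, while in $\Gamma_c$ the direction~$1$ and the directions $\beta{+}1,\ldots,n$ do (with only the middle block of directions $\alpha{+}1,\ldots,\beta$ carrying four edges). So in every case the required ``two edges in some direction'' hypothesis is available.

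To rule out self-intersections I would verify Prop.~\ref{pr:plateausol}(iii) by exhibiting, for every curve, a pair of coordinates $\alpha\neq\beta$ such that the projection $\pi_{\alpha\beta}(\Gamma)$ is a monotone traversal of $\partial([0,1]^2)$. This is a finite check for the five explicit $\R^4$ curves and for each member of the template families, where the ``$\nearrow$''/``$\searrow$'' structure in the definitions of $\Gamma_a,\Gamma_b,\Gamma_c$ suggests a natural pair, for instance one coordinate from the leading increasing run and one from the concluding decreasing run. The main obstacle is exactly this case-by-case verification of a monotone projection: for curves like $\Gamma_8$ one may have to try several projection pairs before landing on one that works, and for $\Gamma_c$ some care is needed because of the repeated block in the middle.

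For part (ii) the argument is essentially formal given~(i). The surface $\Sigma_0$ produced above is embedded and contained in $(-\tfrac12,\tfrac12)^n$, so it is an initial surface in the sense of the Definition at the start of the paper. Schwarz reflection across the straight edges of~$\Gamma$ then extends $\Sigma_0$ analytically across its boundary to a complete minimal surface~$\Sigma$. By the discussion following Thm.~\ref{th:atmostone} the embeddedness of $\Sigma$ depends only on~$\Gamma$, and for each of the curves in question this has already been proven: Thm.~\ref{theoremR4} handles the five explicit Jordan paths in $\R^4$, and Prop.~\ref{pr:series} handles the families $\Gamma_a,\Gamma_b,\Gamma_c$, in both situations by computing $|\Lambda^Q(\Sigma)\cap(2\Z_4)^n|$ and invoking Thm.~\ref{criterion}. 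Therefore~(ii) follows from~(i) combined with the lattice-theoretic characterization of embeddedness already established in Sect.~\ref{se:charemb}.
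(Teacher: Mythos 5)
Your proof of part \ia\ is essentially the paper's, except that you invoke both criteria \ii\ (no interior branch points) and \iii\ (embedding) of Prop.~\ref{pr:plateausol} separately, whereas the paper uses criterion \iii\ alone: a monotone projection gives a graph via Rado's lemma, hence both embeddedness and interior regularity follow. Your redundant extra check via criterion \ii\ is correct (your edge counts are right), but it is not needed. The edge-count verification for $\Gamma_c$ is also only part of the story for the projection criterion, which is what the paper actually checks; still, for part \ia\ the difference is cosmetic.

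In part \ii\ there is a genuine gap. You assert that Schwarz reflection ``extends $\Sigma_0$ analytically across its boundary to a complete minimal surface $\Sigma$'' and then hand the rest off to Thm.~\ref{theoremR4}, Prop.~\ref{pr:series} and Thm.~\ref{criterion}. But the combinatorial machinery of Sect.~\ref{se:charemb} governs only the set-theoretic embeddedness of the union of patches; it says nothing about whether the analytic continuation is free of branch points along $\Gamma$ and its reflected copies. Reflection across the interior of an edge is indeed covered by the reflection principle, but at a vertex of $\Gamma$ the boundary has a $90^\circ$ corner: the extended harmonic map a priori has an isolated singularity there, and even after removing it the surface could in principle be branched. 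The paper closes precisely this hole with a density argument: $\Sigma_0\subset C$ forces density $\tfrac12$ at interior edge points and $\tfrac14$ at vertices, so after reflection the density is $1$ everywhere, i.e.\ $\Sigma$ has no branch points on the boundary skeleton. Without some such argument (the density count, or an appeal to the reflection principle at corners plus removability of the vertex singularity combined with the containment $\Sigma_0\subset C$), the conclusion that $\Sigma$ is a \emph{regular} embedded minimal surface, rather than merely a branched one whose image happens to be embedded, is not established. You should add this step to part \ii.
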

\begin{proof}
  \ia\ Let us demonstrate this on the example of $\Gamma_3$
  with $\gamma =1231\,4234$: 
  For $\alpha\beta=12$ only edges
  in direction $1$ and $2$ are left after projection. 
  Therefore, $\pi_{12}(\Gamma_3)$ is the cycle $1212$, 
  meaning that the projection is a monotone parameterization 
  of the square boundary.  This verifies the condition for an embedding  
  required in Prop.~\ref{pr:plateausol}.
  Inspection of the other Jordan paths is straightforward.

  \noindent
  \ii\ Schwarz reflection continues a conformal harmonic immersion 
  across the interior of a straight edge as such, see \cite[Lemma 7.3]{oss}.  
  At the vertices, the result of the reflections is an isolated
  singularity of the harmonic map, hence removable.

  By \ia, the surface $\Sigma_0$ is regular in the interior. 
  But $\Sigma_0$ is also free of branch points at the boundary:
  The initial surface~$\Sigma_0$ is contained 
  in the cube~$C$ and so has density~$1/2$ at interior points of the edges, 
  or~$1/4$ at vertices, 
  so the density of the reflected surface is~$1$ everywhere.
  %
  That is, the extended surface $\Sigma$ is free of branch points. 
  Then the statements of Thm.~\ref{theoremR4} 
  or Prop.~\ref{pr:series} guarantee the embeddedness of~$\Sigma$.
\end{proof}

Our statements leave the uniqueness problem open.
Note that for higher codimension there are known examples
of Jordan curves bounding two different graphs~\cite{laos}.
As a consequence, it is not evident that symmetries of~$\Gamma$
are inherited by a minimal disk~$\Sigma_0$.

\end{document}